\newtheorem{theorem}{Theorem}[section]
\newtheorem{corollary}[theorem]{Corollary}
\newtheorem{lemma}[theorem]{Lemma}
\newtheorem{definition-lemma}[theorem]{Definition-Lemma}
\newtheorem{proposition}[theorem]{Proposition}
\newtheorem{conjecture}[theorem]{Conjecture}
\theoremstyle{definition}
\newtheorem{definition}[theorem]{Definition}
\newtheorem{remark}[theorem]{Remark}
\numberwithin{equation}{section}  
\newcommand{\relmid}{\mathrel{}\middle|\mathrel{}}
\newdimen\argwidth
\def\db[#1\db]{%
	\setbox0=\hbox{$#1$}\argwidth=\wd0
	\setbox0=\hbox{$\left[\box0\right]$}
	\advance\argwidth by -\wd0
	\left[\kern.3\argwidth\box0 \kern.3\argwidth\right]}
\newcommand{\id}{\operatorname{id}}
\newcommand{\Supp}{\operatorname{Supp}}
\newcommand{\Hom}{\operatorname{Hom}}
\newcommand{\Spec}{\operatorname{Spec}}
\newcommand{\cHom}{\mathop{{\cH}om}\nolimits}
\newcommand{\Cone}{\operatorname{Cone}}
\newcommand{\Module}{\operatorname{Mod}}
\newcommand{\Fuk}{\operatorname{\mathfrak{Fuk}}}
\newcommand{\bA}{\ensuremath{\mathbb{A}}}
\newcommand{\bC}{\ensuremath{\mathbb{C}}}
\newcommand{\bD}{\ensuremath{\mathbb{D}}}
\newcommand{\bR}{\ensuremath{\mathbb{R}}}
\newcommand{\bZ}{\ensuremath{\mathbb{Z}}}
\newcommand{\cA}{\ensuremath{\mathcal{A}}}
\newcommand{\cB}{\ensuremath{\mathcal{B}}}
\newcommand{\cC}{\ensuremath{\mathcal{C}}}
\newcommand{\cE}{\ensuremath{\mathcal{E}}}
\newcommand{\cF}{\ensuremath{\mathcal{F}}}
\newcommand{\cG}{\ensuremath{\mathcal{G}}}
\newcommand{\cH}{\ensuremath{\mathcal{H}}}
\newcommand{\cL}{\ensuremath{\mathcal{L}}}
\newcommand{\cN}{\ensuremath{\mathcal{N}}}
\newcommand{\cO}{\ensuremath{\mathcal{O}}}
\newcommand{\cS}{\ensuremath{\mathcal{S}}}
\newcommand{\frakC}{\ensuremath{\mathfrak{C}}}
\newcommand{\lto}{\longrightarrow}
\newcommand{\simto}{\xrightarrow{\sim}}
\newcommand{\la}{\left\langle}
\newcommand{\ra}{\right\rangle}
\newcommand{\lc}{\left\{}
\newcommand{\rc}{\right\}}
\newcommand{\Sh}{\operatorname{\mathbf{Sh}}}
\newcommand{\cSh}{\operatorname{\Sh}^c}
\newcommand{\lSh}{\operatorname{\Sh}^\diamondsuit}
\newcommand{\wSh}{\operatorname{\Sh}^w}
\DeclareMathOperator{\Coh}{{\operatorname{\mathbf{coh}}}}
\DeclareMathOperator{\QCoh}{{\operatorname{\mathbf{Qcoh}}}}
\newcommand{\tl}[1]{\widetilde{#1}}
\DeclareMathOperator{\Int}{\mathrm{Int}}
\DeclareMathOperator{\Perf}{\mathrm{\mathbf{Perf}}}
\newcommand{\Ls}{{\Lambda_\Sigma}}
\newcommand{\Us}{{U_\sigma}}
\renewcommand{\cC}{\wSh_\Ls(T^n)}
\renewcommand{\cN}{{\mathcal{\mathbf{N}}_\rho}}
\newcommand{\ks}{{\kappa_\Sigma}}
\renewcommand{\Hom}{{\mathop{\mathrm{hom}}}}
\DeclareMathOperator*{\colim}{\operatorname{colim}}
\newcommand{\mC}{{\mathcal{C}}}
\newcommand{\tLs}{{\widetilde{\Ls}}}
\newcommand{\bs}{{\backslash}}
\newcommand{\Ind}{{\operatorname{Ind}}}
\DeclareMathOperator*{\indlim}{``\varinjlim"}
\newcommand{\MS}{{\operatorname{SS}}}
\DeclareMathOperator{\plushat}{\hat{+}}
\let\oldappendix\appendix
\renewcommand{\appendix}{%
	\oldappendix%
	\renewcommand{\thesection}{\Alph{section}}}%
\title{Categorical Localization for the \\ Coherent-Constructible Correspondence}
\author{Yuichi Ike\footnote{Yuichi Ike; 
		Fujitsu Laboratories Ltd., 4-1-1 Kamikodanaka, Nakahara-ku, Kawasaki, Kanagawa, 211-8588 Japan; \newline
		e-mail: \texttt{yuichi.ike.1990@gmail.com}, \texttt{ike.yuichi@jp.fujitsu.com}} 
	\and 
	Tatsuki Kuwagaki\footnote{Tatsuki Kuwagaki; 
		Kavli Institute for Physics and Mathematics of the Universe (WPI), 5-1-5 Kashiwanoha, Kashiwa-shi,
		Chiba, 277-8583, Japan; \newline
		e-mail: \texttt{tatsuki.kuwagaki@ipmu.jp}}}
\date{\today}
\begin{document}
\maketitle

\begin{abstract}
	We prove a microlocal counterpart of categorical localization for Fukaya categories in the setting of the coherent-constructible correspondence. 
	\medskip
	
	\noindent \emph{2010 Mathematics Subject Classification:} 14J33, 53D37, 14M25, 35A27.
	
	\noindent \emph{Keywords:} Mirror symmetry, toric variety, microlocal sheaf theory.
\end{abstract}

\section{Introduction}
Kontsevich's homological mirror symmetry (HMS) conjecture (\cite{KonHMS}) states that two categories associated to a mirror pair are equivalent. For a Calabi--Yau (CY) variety, a mirror is also CY and the conjecture is a quasi-equivalence between the dg category of coherent sheaves over one and the (derived) Fukaya category of the other one.

For non-CYs, mirrors do not need to be varieties. For a Fano toric variety which we will focus on, its mirror is a Landau--Ginzburg (LG) model, which is a holomorphic function on $(\bC^\times)^n$ which can be read from the defining fan of the toric variety (\cite{HV}). Fukaya-type (A-brane) category for an LG model is known to be the Fukaya--Seidel category (\cite{Seidelmutation}) when the LG-model is a Lefschetz fibration. Hence, for a smooth Fano fan, HMS predicts a quasi-equivalence between the dg category of coherent sheaves $\Coh X_\Sigma$ over the toric variety $X_\Sigma$ and the Fukaya--Seidel category $\Fuk(W_\Sigma)$ of the Laurent function $W_\Sigma$:
\begin{equation}\label{HMS}
\Coh X_\Sigma\simeq \Fuk(W_\Sigma).
\end{equation}
This is now proved in many cases (e.g., \cite{AKO, AKO2, Ueda, UY}).

When a variety is not complete, its derived category of coherent sheaves has an infinite-dimensional nature (it has infinite-dimensional Hom spaces). Accordingly, its mirror Fukaya-type category also should have an infinite-dimensional nature. Such a construction is known to be (partially) wrapped Fukaya categories, whose hom spaces could have infinitely many generators (intersection points) formed by quadratically increasing Hamiltonian isotopy (\cite{ASviterbo, Sylvan}).

There is a relation between Fukaya--Seidel and wrapped Fukaya, i.e., the latter is obtained by categorical localization of the former. This point of view is due to Seidel (\cite{Seidelnat}). Let $X$ be a variety and $D$ be a divisor of $X$. Then the dg category of coherent sheaves over $X\bs D$ is obtained by the quotient of the dg category of $X$ by the objects supported on $D$:
\begin{equation}\label{loccoh}
\Coh (X\bs D)\simeq \Coh X/ \Coh_D X.
\end{equation} 
In \cite{Seidelnat}, he expected that the mirror operation of this is the identification of $\id$ and the Serre functor when $D$ is a canonical divisor. Hence, the wrapped Fukaya mirror to $X\bs D$ is obtained by this identification for the Fukaya--Seidel mirror to $X$ (\cite{ASlefshetz, AG}). Our point of view is more closely related to Sylvan's \cite{Sylvan}. He associated the partially wrapped Fukaya category $W_\mathbf{s}(M)$ for a symplectic manifold $M$ with symplectic stops $\mathbf{s}$ which designate directions not to be wrapped. Removing some of symplectic stops $\mathbf{r}$ allows Lagrangians to wrap to the corresponding directions. The resulting category can be described as 
\begin{equation}\label{locfuk}
W_{\mathbf{s}\bs\mathbf{r}}(M)\simeq W_{\mathbf{s}}(M)/\cB_{\mathbf{r}}
\end{equation}
where $\cB_{\mathbf{r}}$ is the full subcategory spanned by Lagrangians near $\mathbf{r}$.

Our aim in this paper is relating (\ref{loccoh}) and an analogue of (\ref{locfuk}) in the microlocal world.
Microlocal study of A-brane categories (Fukaya-type categories) started from the work of Nadler and Zaslow (\cite{Nad, NZ}). Their theorem shows that the infinitesimally wrapped Fukaya category of a cotangent bundle is quasi-equivalent to the dg category of constructible sheaves over its base space. This theorem motivates us to use some types of the dg category of constructible sheaves for A-branes in homological mirror symmetry. For mirrors of toric varieties, Fang--Liu--Treumann--Zaslow (\cite{FLTZ, FLTZ2}) provide the following candidates for such categories as the full subcategories of the dg categories of the constructible sheaves over real tori.

Let $M$ be a free abelian group of rank $n$ and $N$ be its dual. 
Further, let $\Sigma$ be a smooth (not necessarily complete) fan defined in $N_\bR:=N\otimes_\bZ\bR$. 
Set $T^n:=M_\bR/M$. 
We identify $T^n \times N_\bR$ with the cotangent bundle $T^*T^n$ and set 
\begin{equation}
\Ls:=\bigcup_{\sigma\in \Sigma}p(\sigma^\perp)\times (-\sigma)\subset T^*T^n,
\end{equation}
where $p\colon M_\bR\rightarrow T^n$ is the quotient map and $\sigma^\perp\subset M_\bR$ is the orthogonal subspace of $\sigma\subset N_\bR$. It is known that there is a fully faithful morphism $\ks\colon \Coh X_\Sigma\rightarrow \lSh_\Ls(T^n)$ where the right-hand side is the full sub dg category of the quasi-constructible sheaves over $T^n$ spanned by objects whose microsupports are contained in $\Ls$ (see \cite{FLTZ, FLTZ2}).

\begin{conjecture}[The coherent-constructible correspondence \cite{FLTZ, FLTZ2}]\label{introccc}
	Suppose that $\Sigma$ is complete. Then $\ks$ induces a quasi-equivalence
	\begin{equation}
	\Coh(X_\Sigma)\simeq \cSh_\Ls(T^n)
	\end{equation}
	where the right-hand side is the full sub dg category of the constructible sheaves over $T^n$ spanned by objects whose microsupports are contained in $\Ls$.
\end{conjecture}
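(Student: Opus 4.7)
The plan is to upgrade the fully faithful $\ks$ to an equivalence onto $\cSh_\Ls(T^n)$ in two stages: first check that its image lies inside the $\bR$-constructible subcategory $\cSh_\Ls(T^n)\subset\lSh_\Ls(T^n)$, then prove essential surjectivity onto that subcategory. The first stage is structural: completeness of $\Sigma$ implies that the projected arrangement $\{p(\sigma^\perp)\}_{\sigma\in\Sigma}$ cuts $T^n$ into finitely many finite-type locally closed strata. On a natural generating set of $\Coh X_\Sigma$---for instance the twists of the structure sheaf attached to toric divisors---the explicit form of $\ks$ writes the images as iterated extensions of elementary cosheaf-like sheaves supported on finitely many of the $p(\sigma^\perp)$, which gives both $\bR$-constructibility and the microsupport bound $\MS\subset\Ls$.

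The substantive content is essential surjectivity, which I would handle by descent over $\Sigma$ regarded as a poset of cones under inclusion. On the coherent side, Zariski descent against the affine toric cover yields a limit presentation $\Coh X_\Sigma=\varprojlim_{\sigma\in\Sigma}\Coh X_\sigma$. On the constructible side I would construct restriction functors from $\cSh_\Ls(T^n)$ to local categories associated with a star neighborhood of each cone $\sigma$, and realize $\cSh_\Ls(T^n)$ as the corresponding limit using the microlocal cut-off lemma of Kashiwara--Schapira. The affine base case---a single maximal smooth cone $\sigma$ with $X_\sigma\simeq\bC^k\times(\bC^\times)^{n-k}$---is then a direct computation: finitely many explicit generators on each side whose morphism complexes are matched combinatorially against the face data of the cone. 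Since $\ks$ is compatible with these restrictions term by term, the local equivalence would extend to the global limit.

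The main obstacle is the microlocal descent on the constructible side. The microsupport condition $\MS(F)\subset\Ls$ is a global property and does not descend along a naive open cover of $T^n$; one has to promote $\cSh_\Ls$ to a cosheaf of dg categories indexed by $\Sigma$ and verify hyperdescent via a quantization theorem in the style of Guillermou--Kashiwara--Schapira, or equivalently via Nadler's microsheaves on the Legendrian boundary of $\Ls$. Completeness of $\Sigma$ enters precisely at this point, killing possible contributions from $\Ls$ at infinity and making the descent strict rather than merely lax. Without completeness one only recovers the weaker $\lSh_\Ls(T^n)$ equivalence mentioned before the conjecture, which explains why the sharper target $\cSh_\Ls(T^n)$ in the conjecture statement requires $|\Sigma|=N_\bR$.
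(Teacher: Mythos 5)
The statement you are trying to prove is labelled as a \emph{conjecture} in the paper, and the paper never proves it in general. What the paper does is derive it, \emph{conditionally}, from the stronger Conjecture~\ref{conj:gccc}/\ref{conj:qccc}: in Corollary~\ref{gccctoccc} the authors observe that $\kappa_\Sigma|_{\Coh}$ already lands in $\cSh_\Ls(T^n)\subset\wSh_\Ls(T^n)$ (citing Treumann), so that once one knows $\kappa_\Sigma\colon \Coh X_\Sigma\to\wSh_\Ls(T^n)$ is essentially surjective, essential surjectivity onto $\cSh_\Ls(T^n)$ is automatic. The only unconditional case the paper handles is $\dim\Sigma=2$, and there the argument is not descent at all: it combines Theorem~\ref{thm:kuwagaki} (known two-dimensional CCC) with the appendix result that wrapped constructible sheaves along $\Ls$ are constructible for complete $\Sigma$ of dimension two. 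Your proposal, by contrast, attempts a direct proof by descent over the fan, which is closer in spirit to the referenced works \cite{Vaintrob, Kuw2} that the paper merely cites. So you are not reproducing the paper's route, you are sketching a different one.

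On the substance, there are two gaps worth naming. First, your stage 1 is stated too loosely: the natural generators $\Theta'(\sigma)=p_!\bC_{\Int(\sigma^\vee)}[n]$ (images of the quasi-coherent pushforwards $i_{U_\sigma*}\cO_\sigma$) have infinite-rank stalks when $\sigma$ has positive codimension, so they are quasi-constructible, not constructible. The fact that coherent objects nevertheless map to $\cSh_\Ls(T^n)$ requires the finite resolution argument of Treumann, not just finiteness of the stratification; the paper explicitly offloads this to \cite{Tr}. Second, and more seriously, your stage 2 as written is a plan rather than a proof: the hard step (promoting $\cSh_\Ls$ to a sheaf/cosheaf of categories over the fan or over a Legendrian boundary and verifying hyperdescent compatible with Zariski descent on the coherent side) is exactly the technical content of the papers you would be re-deriving, and ``completeness kills contributions at infinity'' is not an argument. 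The role of completeness is also misidentified: it is not that it makes a lax descent strict, but that for complete $\Sigma$ compact objects of $\lSh_\Ls(T^n)$ turn out to be constructible (Proposition~\ref{constiswrapped} plus, for $n=2$, Theorem~\ref{compactness}); for non-complete $\Sigma$ the correct target is $\wSh_\Ls(T^n)$, which genuinely differs from $\cSh_\Ls(T^n)$. As it stands, your proposal identifies the right difficulties but does not close them, whereas the paper sidesteps them entirely by reducing to the ind/quasi-coherent statement.
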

This is considered to be a microlocal counterpart of \eqref{HMS}. In the course of this paper, we will assume a slightly stronger version for (a not necessarily complete) $\Sigma$.

\begin{conjecture}\label{introqccc}
	The morphism $\ks$ extends to a quasi-equivalence
	\begin{equation}
	\QCoh X_\Sigma\simeq \lSh_\Ls(T^n)
	\end{equation}
	where the left-hand side is the dg category of quasi-coherent sheaves over $X_\Sigma$.
\end{conjecture}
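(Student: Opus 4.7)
The natural strategy is to present both sides of the asserted equivalence as Ind-completions of their respective subcategories of compact objects, and then to bootstrap from Conjecture \ref{introccc}. Since $X_\Sigma$ is smooth (the case of interest), $\Coh X_\Sigma$ coincides with the subcategory of compact objects of $\QCoh X_\Sigma$, so $\QCoh X_\Sigma \simeq \Ind(\Coh X_\Sigma)$. The corresponding statement one would want on the microlocal side is $\lSh_\Ls(T^n) \simeq \Ind(\cSh_\Ls(T^n))$; this amounts to identifying $\cSh_\Ls$ with the compact objects of $\lSh_\Ls$ and checking that every quasi-constructible sheaf with microsupport in $\Ls$ is a filtered colimit of constructible sheaves whose microsupports also lie in $\Ls$.

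Once both Ind-presentations are in place, the fully faithful embedding $\ks$ extends uniquely, by Ind-extension, to a colimit-preserving functor $\bar\ks \colon \QCoh X_\Sigma \to \lSh_\Ls(T^n)$, and fully faithfulness of $\bar\ks$ follows formally from fully faithfulness of $\ks$ on compact generators together with compactness of their images. Essential surjectivity of $\bar\ks$ then reduces, by the same compact-generation argument, to essential surjectivity of $\ks$ on compacts, i.e., to Conjecture \ref{introccc} for the fan $\Sigma$. When $\Sigma$ is complete, this is the conjecture itself. When $\Sigma$ is non-complete, one would embed $\Sigma$ in a complete fan $\widetilde\Sigma$ and descend the equivalence from $\widetilde\Sigma$ to $\Sigma$ by using the microlocal localization formalism developed in the present paper, which realizes the removal of a toric divisor (i.e., the passage from $X_{\widetilde\Sigma}$ to $X_\Sigma$ on the algebraic side) as a microlocal analogue of \eqref{loccoh} and \eqref{locfuk}. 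Concretely, one expects a commuting square of quotients in which both vertical arrows identify the kernel on the coherent side with the subcategory of constructible sheaves supported near the deleted cones on the microlocal side, reducing Conjecture \ref{introqccc} for $\Sigma$ to the pair (Conjecture \ref{introqccc} for $\widetilde\Sigma$, localization compatibility).

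The main obstacle is twofold. First, Conjecture \ref{introccc} itself is open in general, so the overall argument is conditional on further progress there; any complete reduction thus produces a conditional theorem whose strength is exactly that of Conjecture \ref{introccc}. Second, the Ind-generation $\lSh_\Ls \simeq \Ind(\cSh_\Ls)$ requires delicate control of microsupport under filtered colimits: one must know that a filtered colimit of constructible sheaves with microsupport in $\Ls$ again has microsupport in $\Ls$, and conversely that every quasi-constructible sheaf with microsupport in $\Ls$ admits such a presentation, which is a non-trivial microlocal exhaustion statement requiring a sufficiently canonical and $\Ls$-compatible constructible approximation.
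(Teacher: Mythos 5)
This statement is a \emph{conjecture} in the paper; there is no general proof of it here. The authors only (a) cite \cite{Vaintrob} and \cite{Kuw2} for a general proof, and (b) prove it for $\dim\Sigma=2$ in the appendix, via Theorem~\ref{thm:kuwagaki} (CCC in two dimensions) combined with Theorem~\ref{compactness} (wrapped $=$ constructible for complete $2$-dimensional fans) and the localization Theorem~\ref{main}. Your plan is therefore closest in spirit to the appendix argument, so I will compare against that.

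The structural outline you give --- present both sides as Ind-completions of their compact objects, Ind-extend $\ks$, and reduce the non-complete case to a complete $\widetilde\Sigma$ via the localization Theorem~\ref{main} --- is exactly what the appendix does. However, there is a real gap in how you handle the microlocal side, and you frame the key theorem as if it were a routine technical point. Proposition~\ref{constiswrapped} only gives the inclusion $\cSh_\Ls\subset\wSh_\Ls$; the reverse inclusion is not formal. In fact the paper points out that ``wrapped $=$ constructible along $\Ls$'' is itself \emph{a consequence} of Conjecture~\ref{introqccc}, not an input available for free, and Corollary~\ref{gccctoccc} runs the implication only in the direction $\text{gccc}\Rightarrow\text{ccc}$, not the reverse. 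Consequently, knowing Conjecture~\ref{introccc} (namely $\Coh X_\Sigma\simeq\cSh_\Ls$) and knowing $\QCoh X_\Sigma\simeq\Ind(\Coh X_\Sigma)$ does \emph{not} yet give $\lSh_\Ls\simeq\Ind(\cSh_\Ls)$: you must independently prove $\wSh_\Ls=\cSh_\Ls$, i.e.\ that every compact object of $\lSh_\Ls$ has perfect (finite-dimensional) stalks. That is the genuine content. In the paper it is Theorem~\ref{compactness}, proved only for $\dim\Sigma=2$ by an explicit polyhedral induction plus the non-characteristic deformation lemma (Proposition~\ref{nonchara}) and a representability argument (Lemma~\ref{okcase}); it is not a ``delicate control of microsupport under filtered colimits'' in the abstract sense you describe, but a concrete finiteness statement requiring a new geometric argument. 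Your plan also quietly needs compact generation of $\lSh_\Ls$ by microlocal skyscrapers (Nadler's result), which you should make explicit. Finally, note that the reduction from non-complete to complete fans via Theorem~\ref{main} is legitimate but circular if left at that: Theorem~\ref{main} itself \emph{assumes} Conjecture~\ref{conj:qccc} for the complete ambient fan, so the whole weight of the argument falls on establishing the complete case, which is precisely where the wrapped-equals-constructible theorem is needed.
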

Conjecture~\ref{introccc} is proved in some special cases (\cite{Tr, SS, Kuw}) and in the equivariant version (\cite{FLTZ}). Conjecture~\ref{introqccc} is sketched in \cite{Vaintrob} and also proved in \cite{Kuw2} independently. The former follows from the latter (Corollary~\ref{gccctoccc} below).

Conjecture~\ref{introqccc} induces an equivalence between the subcategories of compact objects. The compact objects of the left-hand side are coherent sheaves $\Coh X_\Sigma$ and those of the right-hand side are the wrapped constructible sheaves $\wSh_\Ls(T^n)$ of Nadler (\cite{nadler2016wrapped}), which are introduced as a microlocal counterpart of (partially) wrapped Fukaya categories.

To state an analogue of (\ref{locfuk}), we also need the notion of microlocal skyscraper sheaves which is also due to Nadler. Roughly speaking, for a point of a Lagrangian of a cotangent bundle, a microlocal skyscraper sheaf represents the microlocal stalk functor at the point. For a 1-dimensional cone $\rho\in \Sigma$, let $\Sigma_c^\rho$ be the complement of the set of cones containing $\rho$ in $\Sigma$. Set $\cB_\rho$ to be the full subcategory of $\wSh_\Ls(T^n)$ split-generated by microlocal skyscraper sheaves over regular points of $\Lambda_{\Sigma}\bs\Lambda_{\Sigma^\rho_c}$. 

Then our main theorem is the following.

\begin{theorem}\label{intromain}
	Assuming Conjecture~\ref{introqccc} for $\Sigma$, there is a quasi-equivalence
	\begin{equation}
	\wSh_{\Lambda_{\Sigma^\rho_c}}(M_\bR/M)\simeq \wSh_\Ls(M_\bR/M)/\cB_\rho.
	\end{equation}
	such that the induced morphism $\wSh_\Ls(M_\bR/M)\rightarrow \wSh_{\Lambda_{\Sigma^\rho_c}}(M_\bR/M)$ fits into the diagram
	\begin{equation}
	\xymatrix@C=40pt{
		\Coh X_\Sigma \ar[r]^-{\kappa_\Sigma}_-{\simeq} \ar[d] &\ar[d] \wSh_\Ls(M_\bR/M)\\
		\Coh X_{\Sigma^\rho_c} \ar[r]_-{\kappa_{\Sigma^\rho_c}}^-{\simeq} & \wSh_{\Lambda_{\Sigma_c^\rho}}(M_\bR/M).
	}
	\end{equation}
\end{theorem}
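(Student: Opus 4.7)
The plan is to transfer the statement to the coherent side via the assumed CCC and to apply the standard localization sequence for coherent sheaves on an open toric subvariety. By Conjecture~\ref{introqccc} for $\Sigma$, restricted to compact objects, $\ks$ is an equivalence $\Coh X_\Sigma \simeq \wSh_\Ls(T^n)$. The subfan $\Sigma^\rho_c$ is the complement of the star of $\rho$ in $\Sigma$, so $X_{\Sigma^\rho_c}$ is the open toric subvariety of $X_\Sigma$ obtained by removing the closed subscheme $Z$, defined as the union of the orbit closures $\overline{O_\sigma}$ for cones $\sigma \in \Sigma$ containing $\rho$. Thomason--Trobaugh-style localization for coherent sheaves then gives
\[
\Coh X_{\Sigma^\rho_c} \simeq \Coh X_\Sigma / \Coh_Z X_\Sigma,
\]
with the quotient functor induced by pullback along the open immersion.

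The main technical step is to identify, under $\ks$, the subcategory $\cB_\rho$ with $\Coh_Z X_\Sigma$. The Lagrangian stratum $p(\sigma^\perp) \times (-\sigma^\circ)$ with $\sigma \supset \rho$ should correspond, under the CCC dictionary, to the orbit $O_\sigma \subset Z$, and a microlocal skyscraper at a regular point of this stratum should match, up to shift and twist, the pushforward of a point sheaf from $O_\sigma$. Taking split-closures then yields $\ks(\Coh_Z X_\Sigma) = \cB_\rho$. I would establish this either by explicit computation using the description of $\ks$ in \cite{Kuw, Kuw2}, or more intrinsically by characterizing both subcategories as the compact objects whose support lies on the corresponding closed piece (the subset $Z \subset X_\Sigma$ on one side, the closed part $\Ls \setminus \Lambda_{\Sigma^\rho_c}$ of $T^*T^n$ on the other).

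Granting this identification, the coherent localization formula transports to
\[
\wSh_\Ls(T^n)/\cB_\rho \simeq \Coh X_{\Sigma^\rho_c}.
\]
To conclude, I would identify this quotient intrinsically with $\wSh_{\Lambda_{\Sigma^\rho_c}}(T^n)$ by passing to ind-completions: the ind-quotient $\lSh_\Ls(T^n)/\Ind(\cB_\rho)$ should consist precisely of the objects whose microstalks vanish at every regular point of $\Ls \setminus \Lambda_{\Sigma^\rho_c}$, equivalently those whose microsupports are contained in $\Lambda_{\Sigma^\rho_c}$; this identifies the quotient with $\lSh_{\Lambda_{\Sigma^\rho_c}}(T^n)$, and passing to compact objects yields the wrapped sheaf-side equivalence. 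The commutative square then follows automatically, since both horizontal arrows are Verdier quotients by matching subcategories and the left vertical arrow is, by definition, the open restriction; in particular, the CCC equivalence $\kappa_{\Sigma^\rho_c}$ drops out as a corollary of the construction.

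The main obstacle is the identification $\ks(\Coh_Z X_\Sigma) = \cB_\rho$, namely matching ``coherent sheaves supported on a toric subscheme'' with ``constructible sheaves split-generated by microlocal skyscrapers''. This demands a concrete, object-level dictionary between the toric orbits $O_\sigma$ for $\sigma \supset \rho$ and the corresponding Lagrangian strata $p(\sigma^\perp) \times (-\sigma^\circ)$ under $\ks$, going beyond the purely categorical equivalence assumed in Conjecture~\ref{introqccc}.
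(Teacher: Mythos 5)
Your proposal follows essentially the same outline as the paper's proof: start from Seidel's localization $\Coh X_{\Sigma^\rho_c}\simeq \Coh X_\Sigma/\Coh_{D_\rho}X_\Sigma$ (Proposition~\ref{prp:localization}), transport across the assumed CCC, identify the kernel with $\cB_\rho$, and pass to ind-completions to land in $\lSh_{\Lambda_{\Sigma^\rho_c}}(T^n)$. The difference is mainly organizational: the paper first establishes the quotient equivalence using the convolution functor $\beta(F)=\colim_l F*(F_D)^{*l}$ as an explicit model of $j_*j^*$ and only afterward matches $\cN=\ks(\Coh_{D_\rho}X_\Sigma)$ with $\cB_\rho$, whereas you propose to do the matching first.

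However, your ``main obstacle'' is not merely an obstacle --- it is the entire technical content of the theorem, and your two suggested routes both reduce to the same unproved assertion, which in the paper is Lemma~\ref{lem:vanish}: $^\perp\lSh_{\Lambda_{\Sigma^\rho_c}}(T^n)\cap\wSh_\Ls(T^n)=\cN$. Your ``intrinsic'' option of characterizing both sides by ``support on the closed piece'' presupposes a microlocal support-vanishing criterion for wrapped objects that is precisely what must be established. The paper proves it by computing $\ks(\cO_x)$ explicitly (Proposition~\ref{points}), writing it as $\tl{p}_!(\cL\boxtimes\bC_{[0,1)^r})$, and then running a propagation argument with the microlocal Morse lemma (Proposition~\ref{MML}) in the direction dual to $\rho$; the converse also uses Serre-style duality on $X_\Sigma$. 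None of this is gestured at in your sketch. Likewise, your final step --- that the ind-quotient by $\Ind(\cB_\rho)$ ``should'' coincide with $\lSh_{\Lambda_{\Sigma^\rho_c}}(T^n)$ --- needs an explicit quasi-inverse pair $(\Phi,\Psi)$, and the verification that $\Phi\Psi\simeq\id$ again turns on Lemma~\ref{lem:vanish} together with the monoidality of $\ks$ (Proposition~\ref{prp:monoidality}) to control the cones $\Cone(F_i\to F_i*(F_D)^{*l})$. You should make the convolution functor $\beta$ and the microlocal Morse argument explicit; as written, the proposal postpones the theorem rather than proving it.
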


In Appendix~\ref{sec:surface}, we also prove the following.
\begin{theorem}
	If $\dim\Sigma=2$, Conjecture~\ref{introqccc} is true.
\end{theorem}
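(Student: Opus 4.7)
The plan is to reduce to the case of complete smooth 2-dimensional fans (where the conjecture is already known) and then descend to the general case by iteratively applying Theorem~\ref{intromain}. Without loss of generality we may assume that $\Sigma$ is full-dimensional in a 2-dimensional $N_\bR$; the remaining possibility (where the cones of $\Sigma$ span a proper subspace) reduces to this via a product decomposition with a torus factor.

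The first step is to embed $\Sigma$ into a complete smooth 2D fan $\tilde{\Sigma}$ in such a way that every 2-cone of $\tilde{\Sigma}\setminus\Sigma$ contains at least one ray of $\tilde{\Sigma}\setminus\Sigma$. Concretely, for each angular sector of $N_\bR$ bounded by adjacent rays of $\Sigma$ that is not filled by a 2-cone of $\Sigma$, insert at least one new ray and smoothly subdivide; even if the sector happens to already be smooth as a single cone, still insert, say, $\rho_1+\rho_2$ and split it into two smooth pieces. This guarantees that $\tilde{\Sigma}\setminus\Sigma$ is ``generated by added rays'' in the sense that successive star-complement operations can strip it back down to $\Sigma$.

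The base case (complete smooth 2D fan, i.e.\ a smooth projective toric surface) is known by prior work \cite{Kuw, Kuw2}. Enumerate the added rays $\rho_1,\ldots,\rho_m$ of $\tilde{\Sigma}\setminus\Sigma$ and set $\Sigma_0 := \tilde{\Sigma}$, $\Sigma_{i+1} := (\Sigma_i)^{\rho_{i+1}}_c$. Assuming Conjecture~\ref{introqccc} for $\Sigma_i$, Theorem~\ref{intromain} supplies the compact-level equivalence $\kappa_{\Sigma_{i+1}}\colon \Coh X_{\Sigma_{i+1}} \simeq \wSh_{\Lambda_{\Sigma_{i+1}}}(T^n)$, compatible with $\kappa_{\Sigma_i}$ via the square in its statement. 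Ind-completing using $\QCoh X = \Ind(\Coh X)$ for smooth toric varieties and $\lSh = \Ind(\wSh)$ (after Nadler \cite{nadler2016wrapped}) promotes this to Conjecture~\ref{introqccc} for $\Sigma_{i+1}$. After $m$ iterations we reach $\Sigma_m = \Sigma$.

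The main obstacle is the embedding construction: one must guarantee that $\Sigma$ is obtainable from a complete smooth 2D fan by successive star-complement operations of individual rays. This relies on the fact that in dimension two any angular sector admits a smooth subdivision by added rays, a feature that fails in higher dimension and is precisely the reason the theorem is stated for $\dim\Sigma = 2$. Given this, the remainder of the proof is a mechanical induction combining Theorem~\ref{intromain} with the standard ind-completion identities.
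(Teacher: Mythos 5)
Your induction scheme matches the paper's: complete $\Sigma$ to a smooth complete $2$-dimensional fan $\tl\Sigma$, peel off the added rays one at a time via Theorem~\ref{intromain}, and pass between $\Coh \simeq \wSh$ and $\QCoh \simeq \lSh$ by taking $\Ind$ and compact objects. Your observation that one must \emph{force} at least one new ray into each filled-in sector (subdividing even an already-smooth sector so that star-complement operations can strip $\tl\Sigma$ back down to $\Sigma$) is a genuine and correctly handled subtlety that the paper glosses over with ``by completion and induction.''

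The gap is in the base case, which is where the real work of the appendix hides. You cite ``prior work \cite{Kuw, Kuw2}'' for the complete case. But \cite{Kuw2} proves Conjecture~\ref{introqccc} in full generality, so invoking it makes the whole theorem an immediate corollary and defeats the stated purpose (the introduction advertises this result precisely as an argument independent of \cite{Vaintrob, Kuw2}). If you rely only on \cite{Kuw}, i.e.\ on Theorem~\ref{thm:kuwagaki}, you obtain $\Coh X_{\tl\Sigma} \simeq \cSh_{\Lambda_{\tl\Sigma}}(T^2)$: an equivalence with the \emph{constructible} sheaves microsupported in $\Lambda_{\tl\Sigma}$. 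To start the induction by feeding $\tl\Sigma$ into Theorem~\ref{intromain}, you need Conjecture~\ref{introqccc} for $\tl\Sigma$, equivalently $\Coh X_{\tl\Sigma} \simeq \wSh_{\Lambda_{\tl\Sigma}}(T^2)$, an equivalence with the \emph{compact} objects of $\lSh_{\Lambda_{\tl\Sigma}}(T^2)$. Proposition~\ref{constiswrapped} gives only $\cSh_{\Lambda_{\tl\Sigma}}(T^2) \subset \wSh_{\Lambda_{\tl\Sigma}}(T^2)$; the reverse inclusion is \emph{not} formal — it is exactly Theorem~\ref{compactness}, the main technical content of the appendix, proved via a polyhedral deformation argument using the non-characteristic deformation lemma and a dualizability argument. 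Your proposal never addresses this step, and without it (or an implicit appeal to \cite{Kuw2}) the base case is unestablished.
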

The proof is independent of that of \cite{Vaintrob, Kuw2} and is based on the fact Conjecture~\ref{introccc} is true for $\dim\Sigma=2$ (\cite{Kuw}) and an explicit proof that ``constructible = wrapped'' for 2-dimensional complete $\Sigma$.

The structure of the paper is as follows. In Section~\ref{sec:preliminaries}, we prepare microlocal sheaf theory and give a review of the coherent-constructible correspondence (CCC) for later sections. In Section~\ref{sec:main}, we will prove our main theorem. In Appendix~\ref{sec:surface}, we give a proof of Conjecture~\ref{introqccc} for $\dim \Sigma=2$.

\section*{Notation}
We denote the space of morphisms of dg categories by $\hom^\bullet(-,-)$.
In this paper, any (co)limits in dg categories are in a homotopical sense. Cocompleteness always means filtered cocompleteness, i.e., it has a colimit over any small filtered categories (cf.\ \cite{MR1827714}). All functors are derived. All morphisms between dg categories are meant to be morphisms in the homotopy category of the dg category of dg categories localized at quasi-equivalences. We freely use standard toric terminologies from \cite{CLS}, for example, smooth fan and complete fan.

\section{Preliminaries}\label{sec:preliminaries}

\subsection{Microlocal theory of sheaves}

Let $Z$ be a real analytic manifold.
We denote by $\lSh(Z)$ the dg category of complexes of sheaves of $\bC$-vector spaces on $Z$ whose cohomology sheaves are quasi-$\bR$-constructible in the sense of  \cite{FLTZ} (weakly constructible in the sense of \cite{KS}, large constructible in the sense of \cite{nadler2016wrapped}).
Let $\cSh(Z) \subset \lSh(Z)$ be the full dg subcategory consisting of $\bR$-constructible complexes.
In what follows, we consider only (quasi-) $\bR$-constructible sheaves and simply write (quasi-) constructible. 
\medskip

First we recall the definition of microsupports of sheaves (see Kashiwara--Schapira \cite[Section~5.1]{KS}). Although the treatment of \cite{KS} is limited to bounded complexes, we can obtain similar results for unbounded complexes by using \cite{Sp}. See, for example, \cite{Sch}.

\begin{definition}
	Take a point $z \in Z$.
	Let $f$ be a real analytic function defined in a neighborhood of $z$ satisfying $f(z)=0$.
	Recall that the local cohomology was defined as 
	\begin{equation}
	\Gamma_{\{f \ge 0\}}(F)_z
	\simeq
	\Gamma_{\{f \ge 0\}}(B;F)
	=
	\Cone(\Gamma(B;F) \lto \Gamma(B \cap \{f<0\}))[-1],
	\end{equation}
	where $B \subset Z$ is a sufficiently small open ball centered at $z$.

	For any $F \in \lSh(Z)$, we define its \emph{microsupport} (or \emph{singular support}) $\MS(F) \subset T^*Z$ as 
	\begin{align*}
	(z_0;\zeta_0) \not\in \MS(F) 
	\overset{\mathrm{def}}{\Longleftrightarrow} \quad &
	\text{there is an open neighborhood $U$ of $(z_0;\zeta_0)$ such that for} \\
	& \text{any $z \in Z$ and any $C^\infty$-function $f$ defined in a neighbor-} \\
	& \text{hood of $z$ with $(z;df(z)) \in U$, one has $\Gamma_{\{f \ge 0\}}(F)_z \simeq 0$.}
	\end{align*}
\end{definition}

Note that $\MS(F) \cap T^*_ZZ=\Supp(F)$.
For $F \in \lSh(Z)$, the microsupport $\MS(F)$ is a closed conic Lagrangian subvariety of $T^*Z$.
\medskip

The following proposition is a very important tool 
in the microlocal theory of sheaves (for the original version, see \cite[Proposition~2.7.2]{KS}).

\begin{proposition}[The non-characteristic deformation lemma]\label{nonchara}
	\
	Let $F \in \lSh(Z)$ and $\{U_t\}_{t \in \bR}$ be a family of subanalytic open subsets of $Z$.
	Assume the following conditions:
	\begin{enumerate}
		\item $U_t=\bigcup_{s<t}U_s \quad (t \in \bR)$.
		\item For any $(s,t) \in \bR^2$ with $s \le t$, 
		$\overline{U_t \setminus U_s} \cap \Supp(F)$ is compact.
		\item\footnote{The authors have a mistake in the definition of $Z_s$ in \cite[Proposition~2.7.2]{KS}. See the errata  in Schapira's page \texttt{https://webusers.imj-prg.fr/\~{}pierre.schapira/books/Errata.pdf}.}
		Setting $Z_s:=\bigcap_{t>s} \overline{(U_t \setminus U_s)}$, for any $(s,t) \in \bR^2$ with $s \le t$ and $z \in Z_s \setminus U_t$ we have
		\begin{equation}
		(\Gamma_{X \setminus U_t}(F))_z=0.
		\end{equation}
		Then for any $t \in \bR$, one has a quasi-isomorphism
		\begin{equation}
		\Gamma\Bigg(\bigcup_{t \in \bR}U_t;F\Bigg) \simto 
		\Gamma(U_t;F).
		\end{equation}
	\end{enumerate} 
\end{proposition}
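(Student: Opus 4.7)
The plan is to combine a local non-characteristic deformation step with a global continuity argument on the real parameter $t$. Hypothesis (iii) supplies the local input, (ii) makes it quantitative via compactness, and (i) glues local statements together.

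First I would prove the local step: for each $s_0\in\bR$ there exists $\epsilon>0$ such that $R\Gamma(U_{s_0+\epsilon};F)\to R\Gamma(U_{s_0};F)$ is a quasi-isomorphism. By the distinguished triangle
\[
R\Gamma_{U_{s_0+\epsilon}\setminus U_{s_0}}(U_{s_0+\epsilon};F)
\to R\Gamma(U_{s_0+\epsilon};F) \to R\Gamma(U_{s_0};F)\xrightarrow{+1},
\]
it suffices to show the leftmost term vanishes. By (ii) its support lies in the compact set $\overline{U_{s_0+\epsilon}\setminus U_{s_0}}\cap \Supp(F)$, which collapses onto $Z_{s_0}\cap \Supp(F)$ as $\epsilon\downarrow 0$. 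Hypothesis (iii) at $s=s_0$, $t=s_0+\epsilon$ says exactly that the sheaf $\Gamma_{Z\setminus U_{s_0+\epsilon}}(F)$ vanishes along $Z_{s_0}\setminus U_{s_0+\epsilon}$, and a finite covering argument on this compact set promotes the pointwise vanishing to a full neighborhood vanishing for $\epsilon$ small, killing the leftmost term.

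Next I would globalize by a connectedness argument. Fix $s_1$ and set
\[
C:=\{\, s\ge s_1 \mid R\Gamma(U_r;F)\to R\Gamma(U_{s_1};F)\text{ is a quasi-isomorphism for every }r\in[s_1,s]\,\}.
\]
By construction $C$ is an interval containing $s_1$; it is open upward by the local step (iterating through a finite chain covering $[s_0,s_0+\epsilon]$); and it is closed under monotone suprema because hypothesis (i) identifies $R\Gamma(U_{s_\infty};F)$ with the homotopy limit of the $R\Gamma(U_{s_n};F)$ for any $s_n\uparrow s_\infty$, once a $K$-injective model of $F$ is fixed. Connectedness of $[s_1,\infty)$ then forces $C=[s_1,\infty)$. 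Applying the same limit identification at ``infinity'' (which hypothesis (i) yields for the total union $\bigcup_r U_r$) upgrades this to the desired quasi-isomorphism $R\Gamma(\bigcup_r U_r;F)\simto R\Gamma(U_t;F)$.

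The main obstacle is the unbounded-complex setting: the classical Kashiwara-Schapira argument is written for $F\in\Db(Z)$, whereas here $F\in\lSh(Z)$ is a priori unbounded, so injective resolutions must be replaced by $K$-injective ones after Spaltenstein; this is precisely why \cite{Sp,Sch} are invoked just before the statement. A related technical point is the need to ensure that $R\Gamma$ converts the increasing union $U_{s_\infty}=\bigcup_n U_{s_n}$ into the corresponding homotopy limit, so that spurious derived $\lim^1$ contributions do not appear in the closure step of the connectedness argument.
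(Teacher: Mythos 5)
The paper does not actually prove Proposition~\ref{nonchara}; it cites \cite[Proposition~2.7.2]{KS} for the bounded case and \cite{Sp,Sch} for the extension to unbounded complexes, so there is no in-paper argument to compare against, and your sketch must be judged on its own.

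Your overall architecture is the correct one: a continuity-from-below step, in which $\Gamma$ of an increasing union is identified with the inverse limit of sections of a $K$-injective model (this is precisely where Spaltenstein is needed and where $\lim^1$ must be controlled), plus an ``openness'' step, glued by the connectedness of $\bR$. Your comments on the unbounded-complex subtleties are also on target.

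However, the openness step has a genuine gap. First, a misalignment: the cone $R\Gamma_{U_{s_0+\epsilon}\setminus U_{s_0}}(U_{s_0+\epsilon};F)$ equals $R\Gamma\bigl(U_{s_0+\epsilon};\Gamma_{Z\setminus U_{s_0}}(F)\bigr)$, so the relevant ``sections supported outside'' sheaf is $\Gamma_{Z\setminus U_{s_0}}(F)$, not $\Gamma_{Z\setminus U_{s_0+\epsilon}}(F)$; thus (iii) should be read at $t=s_0$, not $t=s_0+\epsilon$. Second, and more seriously: even with the right sheaf, hypothesis (iii) only gives stalk vanishing on $Z_{s_0}=\bigcap_{t>s_0}\overline{U_t\setminus U_{s_0}}$, whereas the support of $\Gamma_{Z\setminus U_{s_0}}(F)|_{U_{s_0+\epsilon}}$ lies in $(U_{s_0+\epsilon}\setminus U_{s_0})\cap\Supp F$, whose (compact) closure is strictly larger than $Z_{s_0}\cap\Supp F$ for every fixed $\epsilon>0$. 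For a point $z$ of ``level'' $s'\in(s_0,s_0+\epsilon)$ (so $z\in Z_{s'}$) the pair $(s,t)=(s',s_0)$ violates $s\le t$, and (iii) is simply silent about $\Gamma_{Z\setminus U_{s_0}}(F)_z$. Consequently a finite cover of $Z_{s_0}\cap\Supp F$ cannot control the whole cone, and no amount of shrinking $\epsilon$ helps; the phrase ``promotes the pointwise vanishing to a full neighborhood vanishing'' does not go through as written, quite apart from the fact that stalk vanishing of a general (unbounded, weakly constructible) complex at a point does not automatically yield vanishing on a neighborhood.

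The repair is the critical-value (Morse-theoretic) argument that is actually used in \cite{KS}: suppose the cone is nonzero and take a nonzero class $u$; its support is closed and, after intersecting with the sets of (ii), relatively compact; set $t_1:=\sup\{t\in[s_0,s_0+\epsilon]:\Supp(u)\cap U_t=\emptyset\}$ (finite and $\ge s_0$); show by compactness that $\Supp(u)$ accumulates at some $z_*\in Z_{t_1}$; observe that $u$ is a section supported in $Z\setminus U_{t_1}$, so its germ at $z_*$ lives in $\Gamma_{Z\setminus U_{t_1}}(F)_{z_*}$, contradicting (iii) at $s=t=t_1$. The crucial point is that (iii) is invoked at the a priori unknown intermediate level $t_1$, not at the fixed endpoint $s_0$; this is exactly why (iii) is stated for \emph{all} pairs $s\le t$. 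Once the openness step is argued this way, the rest of your connectedness scheme closes the proof.
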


By the definition of microsupports and the non-characteristic deformation lemma, we obtain the following proposition, called the microlocal Morse lemma.

\begin{proposition}[Microlocal Morse lemma]\label{MML}
	Let $F \in \lSh(Z)$ and $\psi \colon Z \to \bR$ be a real analytic function.
	Let $a<b$ in $\bR$ and assume that 
	\begin{enumerate}
		\item $\psi$ is proper on $\Supp(F)$;
		\item for any $z \in \psi^{-1}([a,b))$, $d\psi(z) \not\in \MS(F)$.
	\end{enumerate}
	Then the natural restriction morphism 
	\begin{equation}
	\Gamma(\psi^{-1}((-\infty,a));F)
	\lto 
	\Gamma(\psi^{-1}((-\infty,b));F)
	\end{equation}
	is a quasi-isomorphism.
\end{proposition}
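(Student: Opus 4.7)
The plan is to reduce Proposition~\ref{MML} to a single application of the non-characteristic deformation lemma (Proposition~\ref{nonchara}), using a family $\{U_t\}_{t\in\bR}$ of sublevel sets of $\psi$ that interpolates between $\psi^{-1}((-\infty,a))$ and $\psi^{-1}((-\infty,b))$. Concretely, I would choose a smooth non-decreasing function $g\colon\bR\to[a,b)$ with $g(t)=a$ for all $t\le 0$ and $g(t)\nearrow b$ as $t\to+\infty$, and set
\begin{equation}
U_t := \psi^{-1}\bigl((-\infty,g(t))\bigr).
\end{equation}
Then $\bigcup_{t\in\bR}U_t=\psi^{-1}((-\infty,b))$ while $U_0=\psi^{-1}((-\infty,a))$, so the conclusion of Proposition~\ref{nonchara} applied at $t=0$ will give exactly the restriction quasi-isomorphism between the sections over these two open sets.

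Next I would verify the three hypotheses of Proposition~\ref{nonchara}. Condition (i) is immediate from the continuity of $g$, since $\sup_{s<t}g(s)=g(t)$ and hence $\bigcup_{s<t}U_s=U_t$. Condition (ii) follows from the properness assumption: $\overline{U_t\setminus U_s}\subset \psi^{-1}([g(s),g(t)])$ with $[g(s),g(t)]\subset[a,b)$, and $\psi^{-1}([g(s),g(t)])\cap\Supp(F)$ is compact by hypothesis. For condition (iii), continuity of $g$ yields $Z_s\subset\psi^{-1}(\{g(s)\})$, and any $z\in Z_s\setminus U_t$ satisfies $g(s)\le g(t)\le \psi(z)=g(s)$, forcing $g(s)=g(t)$, so the condition reduces to the vanishing
\begin{equation}
\bigl(\Gamma_{\{\psi\ge g(s)\}}F\bigr)_z=0 \quad\text{for every $z$ with $\psi(z)=g(s)$.}
\end{equation}

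This last vanishing is where the microsupport hypothesis enters, and is the real content of the proof. Any such $z$ lies in $\psi^{-1}([a,b))$, so by assumption $d\psi(z)\notin\MS(F)$. Applying the definition of microsupport with the real analytic function $f(y):=\psi(y)-g(s)$, which satisfies $f(z)=0$ and $df(z)=d\psi(z)$, the open neighborhood of $(z;d\psi(z))$ furnished by the defining condition of $\MS(F)$ certifies $\Gamma_{\{f\ge 0\}}(F)_z\simeq 0$, which is exactly what is needed. The one technical subtlety I foresee is choosing the reparametrization $g$ so that simultaneously the family exhausts $\psi^{-1}((-\infty,b))$, a distinguished member realizes $\psi^{-1}((-\infty,a))$, and every level $g(s)$ lies in the range $[a,b)$ where the microsupport hypothesis is available; a sigmoid-type $g$ as above achieves all three at once, and the proof then amounts to a direct invocation of Proposition~\ref{nonchara}.
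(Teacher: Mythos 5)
Your proof is correct and it spells out exactly the argument the paper invokes in one sentence (``by the definition of microsupports and the non-characteristic deformation lemma''). The reparametrization via a non-decreasing $g$ with $g\equiv a$ on $(-\infty,0]$ and $g\nearrow b$ is the right device to make the family $\{U_t\}$ range over all of $\bR$ while keeping every level in $[a,b)$; each $U_t=\psi^{-1}((-\infty,g(t)))$ is automatically subanalytic since $\psi$ is real analytic, and your verification of (i)--(iii), with (iii) collapsing via $Z_s\subset\psi^{-1}(\{g(s)\})$ to the microsupport vanishing for $f=\psi-g(s)$, is exactly how the Kashiwara--Schapira proof goes. (Minor remarks: $g$ need only be continuous, not smooth, and note the paper's statement has the restriction arrow written backwards -- the non-characteristic deformation lemma produces the map from $\Gamma(\psi^{-1}((-\infty,b));F)$ to $\Gamma(\psi^{-1}((-\infty,a));F)$, which is what your argument actually yields.)
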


We denote by $\cHom$ the internal Hom in the dg category $\lSh(Z)$.
We shall give a bound for $\cHom(F,G)$.

\begin{definition}[{\cite[Definition~6.2.3(v) and Remark~6.2.8(ii)]{KS}}]
	Let $A$ and $B$ be closed conic subsets of $T^*Z$.
	We define a conic subset $A \plushat B$ of $T^*Z$ by using a local coordinate system as follows.
	Let $x$ be a local coordinate system of $Z$ and $(x;\xi)$ be the associated coordinate system of $T^*Z$.
	Then $(x_0;\xi_0) \in A \plushat B$ if there exist sequences $\{(x_n;\xi_n)\}_n$ in $A$ and $\{(y_n;\eta_n)\}_n$ in $B$ such that $x_n \underset{n}{\to} x_0, y_n \underset{n}{\to} x_0, \xi_n + \eta_n \underset{n}{\to} \xi_0$, and $|x_n-y_n||\xi_n| \underset{n}{\to} 0$.
\end{definition}

\begin{proposition}[{\cite[Corollary~6.4.5(ii)]{KS}}]\label{prp:SSHom}
	For $F,G \in \lSh(Z)$, one has
	\begin{equation}
	\MS(\cHom(F,G)) \subset \MS(G) \plushat \MS(F)^a,
	\end{equation}
	where $a \colon T^*Z \to T^*Z, (x;\xi) \mapsto (x;-\xi)$ denotes the antipodal map.
\end{proposition}

We introduce the notion of a microlocal stalk (microlocal type in the sense of \cite{KS}).
Let $\Lambda \subset T^*Z$ be a conic Lagrangian subvariety and $p=(z_0; \zeta_0) \in \Lambda$ a point in $\Lambda$.
Assume that $\Lambda$ is smooth near $p$.
Take a real analytic function $f$ in a neighborhood of $z_0$ satisfying the following three conditions: 
\begin{enumerate}
	\item $f(z_0)=0$.
	\item $df(z_0)=p$.
	\item The graph $\Gamma_{df}$ and $\Lambda$ intersect transversely at $p$.
\end{enumerate}

\begin{definition}\label{microlocalstalk}
	Let $\Lambda, p, f$ be as above and $F \in \lSh(Z)$. Assume $\MS(F) \subset \Lambda$.
	We define the \emph{microlocal stalk} at $p$ associated with $f$ by 
	\begin{equation}\label{eq:microlocalstalk}
	\phi_{p,f}(F):=
	\Gamma_{\{f \ge 0\}}(F)_{z_0}
	\simeq 
	\Gamma_{\{f \ge 0\}}(B;F),
	\end{equation}
	where $B \subset Z$ is a sufficiently small open ball centered at $z_0$.
\end{definition}

Note that $B$ depends only on $\Lambda, p$, and $f$.
That is, we can take the same $B$ for any object $F$ with $\MS(F) \subset \Lambda$ in \eqref{eq:microlocalstalk}.

\subsection{Nadler's wrapped constructible sheaves}
For a closed conic Lagrangian subvariety $\Lambda \subset T^*Z$, we denote by $\lSh_{\Lambda}(Z)$ (resp.\ $\cSh_{\Lambda}(Z)$) the full subcategory of objects $F \in \lSh(Z) \ (\text{resp.\ } \cSh(Z))$ satisfying $\MS(F) \subset \Lambda$.
The homotopy category of $\lSh_{\Lambda}(Z)$ (resp.\ $\cSh_{\Lambda}(Z)$) is equivalent to the derived category of quasi-constructible (resp.\ constructible) complexes of sheaves microsupported in $\Lambda$.

Let $\wSh_{\Lambda}(Z)$ be the full dg subcategory of compact objects in $\lSh_{\Lambda}(Z)$ whose objects are called \emph{wrapped constructible sheaves}, which is defined by Nadler (\cite{nadler2016wrapped}).

Let $p \in \Lambda_{\mathrm{reg}}$ be a smooth point of $\Lambda$ and $f$ be a real analytic test function satisfying conditions (i)--(iii) stated just before Definition~\ref{microlocalstalk}. 
The microlocal stalk functor $\phi_{p,f} \colon \lSh_\Lambda(Z) \to \Module(k)$ preserves limits since it is a composite of functors $\Gamma_{\{f \ge 0\}}(\ast)$ and $\Gamma(B;\ast)$, which are both right adjoint functors.
Moreover $\phi_{p,f}$ preserves colimits since 
\begin{equation}
\Gamma_{\{f \ge0\}}(B;F) \simeq \hom(\bC_{\{f \ge 0\}\cap B},F)
\end{equation}
and $\bC_{\{f \ge 0\}\cap B}$ is constructible (see Proposition~\ref{constiswrapped}).
Hence $\phi_{p,f}$ admits a left adjoint functor $\phi^l_{p,f} \colon \Module(k) \to \lSh_\Lambda(Z)$ and $\phi^l_{p,f}$ preserves compact objects.

\begin{definition}
	Let $\Lambda,p$, and $f$ be as above.
	Define the \emph{microlocal skyscraper} $F_{p, f}:=\phi^l_{p,f}(k) \in \wSh_{\Lambda}(Z)$ as the object representing the microlocal stalk functor:
	\begin{equation}
	\phi_{p,f}(F) =
	\Hom(F_{p,f},F) \quad 
	(F \in \lSh_\Lambda(Z)).
	\end{equation}
\end{definition}

Nadler (\cite{nadler2016wrapped}) gives a more geometric characterization of $\wSh_\Lambda(Z)$ by using microlocal skyscrapers.

\begin{lemma}[{\cite[Lemma~3.15]{nadler2016wrapped}}]
	The dg category $\wSh_\Lambda(Z)$ is split-generated by the microlocal skyscrapers $F_{p,f} \ (p \in \Lambda_{\mathrm{reg}})$.
\end{lemma}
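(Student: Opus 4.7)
The overall plan is to show that the microlocal skyscrapers $\{F_{p,f}\}_{p \in \Lambda_{\mathrm{reg}}}$ form a set of compact generators for the presentable stable dg-category $\lSh_\Lambda(Z)$; by Thomason--Neeman compact generation, this is equivalent to split-generation of the compact subcategory $\wSh_\Lambda(Z) = (\lSh_\Lambda(Z))^c$, which is exactly the conclusion.

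First I would verify that each $F_{p,f}$ is compact. This is essentially already recorded immediately before the definition: since $\phi_{p,f}(F) \simeq \Hom(\bC_{\{f \ge 0\} \cap B}, F)$ with $\bC_{\{f \ge 0\} \cap B}$ constructible (hence a compact object of $\lSh(Z)$), the functor $\phi_{p,f}$ preserves all colimits, so its left adjoint $\phi^l_{p,f}$ preserves compact objects; in particular $F_{p,f} = \phi^l_{p,f}(k)$ is compact.

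Second I would prove the detection statement: if $F \in \lSh_\Lambda(Z)$ satisfies $\Hom(F_{p,f}, F[n]) = 0$ for every $p \in \Lambda_{\mathrm{reg}}$, every admissible test function $f$ (in the sense of conditions (i)--(iii)), and every integer $n$, then $F \simeq 0$. By representability the hypothesis reads $\phi_{p,f}(F) = 0$. For a smooth point $p = (z_0;\zeta_0) \in \Lambda_{\mathrm{reg}}$ and $f$ transverse to $\Lambda$ at $p$, the vanishing $\Gamma_{\{f \ge 0\}}(F)_{z_0} = 0$ is precisely the defining condition for $p \notin \MS(F)$, so $\MS(F) \cap \Lambda_{\mathrm{reg}} = \emptyset$, forcing $\MS(F) \subset \Lambda_{\mathrm{sing}}$. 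Since $\dim \Lambda_{\mathrm{sing}} < \dim Z$, one iterates the argument down a Whitney stratification of $\Lambda$, at each stage choosing test functions transverse to the open stratum under consideration, and concludes successively that $\MS(F)$ is disjoint from every stratum of positive codimension in $\Lambda$ as well. Combined with a terminal application of the microlocal Morse lemma (Proposition~\ref{MML}) to a proper exhaustion function $\psi$ on $Z$, so that $\Gamma(\psi^{-1}(-\infty,t);F)$ is constant in $t$ and vanishes as $t \to -\infty$, we obtain $F \simeq 0$.

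The main obstacle is the inductive descent through the singular strata of $\Lambda$. Since $F$ is only quasi-constructible, $\MS(F)$ need not itself be Lagrangian, so one cannot immediately conclude $\MS(F) = \emptyset$ by dimension counting against $\Lambda_{\mathrm{sing}}$. The technical crux is to lift the microlocal Morse argument to the stratified setting, verifying transversality of the chosen test functions against each deeper stratum and matching microlocal stalks computed there with those at nearby regular points; this is where the bulk of Nadler's argument in \cite{nadler2016wrapped} goes, and it is the step I would expect to take the most care to carry out in complete detail.
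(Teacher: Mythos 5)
Your overall skeleton---show the $F_{p,f}$ are compact, show they jointly detect vanishing in $\lSh_\Lambda(Z)$, and then invoke Thomason--Neeman to pass from compact generation of $\lSh_\Lambda(Z)$ to split-generation of $\wSh_\Lambda(Z)$---is the right strategy and matches the cited argument of Nadler. The compactness step is fine. But the finish of the detection step goes astray, and in a way that you yourself flag as the place you are unsure.

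Having shown $\MS(F)\cap\Lambda_{\mathrm{reg}}=\emptyset$ (which, to be precise, is not literally the definition of microsupport from a single $f$, but follows from the local structure of weakly constructible sheaves near a smooth Lagrangian point), you propose to ``iterate the argument down a Whitney stratification of $\Lambda$.'' This cannot work as stated: the hypothesis only gives you vanishing against $F_{p,f}$ for $p\in\Lambda_{\mathrm{reg}}$, so there is nothing to test against along the singular strata; a microsupport component hiding entirely inside $\Lambda_{\mathrm{sing}}$ is invisible to microlocal stalks at regular points. Your further worry---that for quasi-constructible $F$ the set $\MS(F)$ ``need not itself be Lagrangian,'' so dimension counting fails---is also unfounded in this paper's conventions: the preliminaries explicitly record that $\MS(F)$ is a closed conic Lagrangian subvariety of $T^*Z$ for every $F\in\lSh(Z)$ (equivalently, one can quote the involutivity theorem, which holds with no constructibility hypothesis). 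With that in hand the conclusion is immediate and needs no stratification descent and no terminal microlocal Morse argument: $\MS(F)\subset\Lambda_{\mathrm{sing}}$ has dimension strictly less than $n=\dim Z$, while a nonempty conic Lagrangian (or more generally a nonempty involutive subset) has dimension at least $n$; hence $\MS(F)=\emptyset$, so $\Supp(F)=\MS(F)\cap T^*_ZZ=\emptyset$ and $F\simeq 0$. Replacing your last two steps with this one-line dimension count closes the gap.
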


\subsection{Preliminaries on CCC}
In this section, we review the coherent-constructible correspondence.
\subsubsection{Formulation}
Let $M$ be a free abelian group of rank $n$ and $N$ the dual abelian group of $M$.
Let $\Sigma$ be a smooth fan in $N_\bR$.
We denote by $X_\Sigma$ the smooth toric variety associated with $\Sigma$ (see, e.g., \cite{CLS}).
We also set 
\begin{equation}\label{eq:lagsubset}
\begin{split}
\widetilde{\Lambda_\Sigma}&:=
\bigcup_{\sigma \in \Sigma}
(\sigma^\perp+M) \times (-\sigma)
\subset T^*M_\bR,\\
\Lambda_\Sigma
&:=
\bigcup_{\sigma \in \Sigma}
p(\sigma^\bot) \times (-\sigma)
\subset T^*(M_\bR/M)
= T^*T^n,
\end{split}
\end{equation}
where $p \colon M_\bR \simeq \bR^n \to M_\bR/M =: T^n$ is the projection.
Note that $\Lambda_\Sigma$ is a conic Lagrangian subvariety in $T^*T^n$.

Denote the dg category of coherent sheaves on $X_\Sigma$ by $\Coh X_\Sigma$ which is localized at quasi-isomorphisms.
We also denote by $\cSh_{\Lambda_{\Sigma}}(T^n)$ the dg category of $\bR$-constructible objects on $T^n$ whose microsupports are contained in $\Lambda_\Sigma$ (which is also localized at quasi-isomorphisms).
When $\Sigma$ is complete, there is a well-defined fully faithful morphism (cf.\ \cite{FLTZ, Tr})
\begin{equation}\label{mor:ccc}
\kappa_\Sigma \colon 
\Coh X_\Sigma \lto \cSh_{\Lambda_{\Sigma}}(T^n).
\end{equation}
We will recall the construction of $\kappa_\Sigma$ in the next section.

Fang--Liu--Treumann--Zaslow formulated the following conjecture, called the \emph{coherent-constructible correspondence}, and its equivariant version, and proved the latter (\cite{FLTZ, FLTZ2}). 
\begin{conjecture}[The coherent-constructible correspondence, CCC]\label{conj:ccc}
	For a smooth complete fan $\Sigma$, 
	the morphism \eqref{mor:ccc} induces a quasi-equivalence of dg categories. 
\end{conjecture}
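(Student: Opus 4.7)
The plan is to prove Conjecture~\ref{conj:ccc} by directly verifying that $\kappa_\Sigma$ is both fully faithful and essentially surjective on the constructible side. For fully faithfulness, I would follow the Fang--Liu--Treumann--Zaslow strategy and check the statement on a set of generating equivariant line bundles $\mathcal{O}(D)$: the space $\hom^\bullet(\kappa_\Sigma\mathcal{O}(D_1),\kappa_\Sigma\mathcal{O}(D_2))$ admits a local-cohomology description using the material of Section~2, which should match the \v{C}ech computation of morphisms in $\Coh X_\Sigma$ over the affine cover $\{U_\sigma\}_{\sigma\in\Sigma}$. This reduces to a torus-weight bookkeeping on each cone $\sigma$, comparing the piecewise-linear data of $D_1,D_2$ with the microsupport condition imposed by $\Lambda_\Sigma$.

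The essential surjectivity direction is the substantive content. Here I would aim to show that the images $\kappa_\Sigma\mathcal{O}(D)$ split-generate $\cSh_{\Lambda_\Sigma}(T^n)$. Since $X_\Sigma$ is smooth and complete, $\Coh X_\Sigma = \Perf X_\Sigma$ is split-generated by finitely many equivariant line bundles, so once we know their images generate the target every object of $\cSh_{\Lambda_\Sigma}(T^n)$ is built as a finite iterated cone of objects in the essential image of $\kappa_\Sigma$. To prove generation, I would perform a devissage along the stratification of $T^n$ induced by $\Lambda_\Sigma$: applying the non-characteristic deformation lemma (Proposition~\ref{nonchara}) and the microlocal Morse lemma (Proposition~\ref{MML}), any $F\in\cSh_{\Lambda_\Sigma}(T^n)$ can be trimmed into pieces supported near the projected subspaces $p(\sigma^\perp)$, and each piece should be recognizable as a cone among standard objects $\kappa_\Sigma\mathcal{O}(D)$. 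Locally on an affine chart this reduces to the classical fact that line bundles generate $\Coh U_\sigma$ on a smooth affine toric variety.

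An alternative, more conceptual route is to first establish the quasi-coherent Conjecture~\ref{introqccc} and then pass to compact objects. The compact objects of $\QCoh X_\Sigma$ are precisely $\Coh X_\Sigma$, while the compact objects of $\lSh_{\Lambda_\Sigma}(T^n)$ are by definition $\wSh_{\Lambda_\Sigma}(T^n)$. Completeness of $\Sigma$ forces $T^n$ to be compact, and together with finiteness of the induced stratification this should identify $\wSh_{\Lambda_\Sigma}(T^n)$ with $\cSh_{\Lambda_\Sigma}(T^n)$: constructibles are wrapped by Proposition~\ref{constiswrapped}, and in the compact/finite-strata regime the reverse inclusion follows from microlocal skyscrapers at smooth points of $\Lambda_\Sigma$ being $\bR$-constructible.

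The main obstacle in both approaches is the generation/essential-surjectivity step. Microlocally, $\Lambda_\Sigma$ is a singular Lagrangian, and the conic directions $-\sigma$ interact nontrivially across the projection $p\colon M_\bR\to T^n$; arbitrary constructible objects microsupported in $\Lambda_\Sigma$ can have intricate stalks on low-dimensional strata, and realizing them by a finite cone decomposition of line-bundle images requires genuinely microlocal control that local combinatorics on each $U_\sigma$ do not supply directly. The ``constructible equals wrapped'' identification in the compact-object approach is equally subtle in general: the appendix of this paper handles only $\dim\Sigma=2$, and extending beyond that is where the proof must do real work.
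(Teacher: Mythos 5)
This statement is a conjecture in the paper, and the paper does not prove it in general; what it does is reduce it. Corollary~\ref{gccctoccc} shows that Conjecture~\ref{conj:ccc} follows from Conjecture~\ref{conj:gccc} (equivalently~\ref{conj:qccc}), and the latter is then cited as established in \cite{Vaintrob,Kuw2} and proved independently for $\dim\Sigma=2$ in the Appendix. Your second route --- prove the quasi-coherent statement and take compact objects --- is indeed the paper's route, but the justification you give for the identification $\wSh_{\Lambda_\Sigma}(T^n)\simeq\cSh_{\Lambda_\Sigma}(T^n)$ is wrong on two counts. First, $T^n=M_\bR/M$ is a compact torus for \emph{any} fan, so ``completeness of $\Sigma$ forces $T^n$ to be compact'' cannot be where completeness enters. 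Second, you appeal to microlocal skyscrapers $F_{p,f}$ at regular points of $\Lambda_\Sigma$ being $\bR$-constructible: this is neither asserted nor used anywhere in the paper, and is far from evident --- a priori $F_{p,f}$ is only a compact object of $\lSh_{\Lambda_\Sigma}(T^n)$, and the whole content of Theorem~\ref{compactness} in the Appendix is to establish ``wrapped equals constructible'' for $\dim\Sigma=2$ by a genuinely delicate polyhedral argument, not by a formal compactness/finiteness observation.

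The paper's actual argument in Corollary~\ref{gccctoccc} is cheaper and requires no independent comparison of $\wSh$ and $\cSh$: one already knows from \cite{Tr} that $\kappa_\Sigma|_{\Coh}$ is fully faithful with essential image inside $\cSh_{\Lambda_\Sigma}(T^n)\subset\wSh_{\Lambda_\Sigma}(T^n)$ (the latter inclusion is Proposition~\ref{constiswrapped}); once Conjecture~\ref{conj:gccc} makes the composite $\Coh X_\Sigma\to\wSh_{\Lambda_\Sigma}(T^n)$ an equivalence, both inclusions are squeezed into equalities simultaneously, and $\kappa_\Sigma\colon\Coh X_\Sigma\to\cSh_{\Lambda_\Sigma}(T^n)$ is an equivalence. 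The coincidence of constructible and wrapped sheaves for complete $\Sigma$ is an \emph{output} of this argument, not an ingredient you must supply. Your first, direct devissage route is not pursued in the paper; the essential-surjectivity/generation step you correctly flag as ``the substantive content'' is exactly what remains a conjecture in general, so as written that branch is a programme rather than a proof.
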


In some cases, proofs were known before this paper (see also Remark \ref{after}).
\begin{theorem}[\text{\cite{Tr, SS, Kuw}}]\label{thm:kuwagaki}
	If $\Sigma$ is cragged or $\dim\Sigma=2$, the morphism $\kappa_\Sigma$ is a quasi-equivalence.
\end{theorem}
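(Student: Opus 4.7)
The plan is to split the problem according to the two hypotheses. In either case, the morphism $\kappa_\Sigma$ is already known to be fully faithful---this is essentially the FLTZ $\hom$-computation in terms of equivariant line bundles and does not use any further smallness assumption on $\Sigma$---so the real task is to prove essential surjectivity: every object of $\cSh_{\Lambda_\Sigma}(T^n)$ lies in the triangulated envelope of $\kappa_\Sigma(\Coh X_\Sigma)$.

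For the cragged case, I would argue by induction on the combinatorial structure used to build $\Sigma$. The base case is an affine fan consisting of the faces of a single cone $\sigma$, where $\Coh X_\sigma$ is split-generated by $\mathcal{O}_{X_\sigma}$ and a direct computation identifies $\kappa_\Sigma(\mathcal{O}_{X_\sigma})$ with an explicit standard sheaf that generates the corresponding constructible subcategory on $T^n$. For the inductive step, a cragged decomposition of $\Sigma$ gives a Zariski cover $X_\Sigma = U_1 \cup U_2$ by smaller toric opens, hence a descent Cartesian square on the algebraic side. On the microlocal side, the corresponding open/closed decomposition of $T^n$ relative to $\Lambda_\Sigma$ yields a parallel recollement via the six-functor formalism, and $\kappa_\Sigma$ intertwines the two squares; a diagram chase then reduces essential surjectivity for $\Sigma$ to the inductive hypothesis on the pieces.

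For $\dim\Sigma = 2$, I would instead use the explicit structure of smooth complete toric surfaces: each such $X_\Sigma$ admits a full strong exceptional collection of line bundles (Kawamata, Hille--Perling), so it suffices to verify that the $\kappa_\Sigma$-images of those line bundles split-generate $\cSh_{\Lambda_\Sigma}(T^n)$. These images are explicit pushforwards of constant sheaves from open polytopes $p(P) \subset T^n$. Generation on the constructible side is then checked by induction on the stratification of $T^n$ cut out by the hyperplane arrangement $\bigcup_{\sigma \in \Sigma} p(\sigma^\perp)$, using the microlocal Morse lemma (Proposition~\ref{MML}) to propagate sections across strata one codimension at a time.

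The main obstacle in either approach is essential surjectivity. Fully faithfulness ultimately reduces to a combinatorial $\hom$-computation, but showing that the microsupport constraint $\MS(F) \subset \Lambda_\Sigma$ is rigid enough to place $F$ in the image of coherent sheaves requires a careful stratification analysis. In the cragged case, the delicate point is matching Zariski gluing on $X_\Sigma$ with the open/closed decomposition of $T^n$ in a $\kappa_\Sigma$-compatible way, and in particular verifying that the restriction functors commute with $\kappa$ up to coherent homotopy. In the 2-dimensional case, the delicate point is controlling negative twists of line bundles (whose cohomology can land in arbitrarily high degrees as the fan grows) and confirming that the candidate set of generators is genuinely split-generating rather than generating only a proper subcategory of $\cSh_{\Lambda_\Sigma}(T^n)$.
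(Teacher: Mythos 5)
This theorem is not proved in the paper: it is quoted verbatim from the references \cite{Tr, SS, Kuw}, and the paper offers no argument of its own, so there is no ``paper's proof'' to compare against. Your proposal is therefore a blind reconstruction of work done elsewhere, and it is worth saying how it deviates from the actual published proofs.

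For the two-dimensional case your plan does not follow Kuwagaki's route. In \cite{Kuw} essential surjectivity is established by induction along the blow-up structure of smooth complete toric surfaces (every such surface is obtained from $\mathbb{P}^2$ or a Hirzebruch surface by a sequence of torus-fixed-point blow-ups), comparing the coherent and constructible sides across each blow-up via a parallel semi-orthogonal decomposition, rather than by exhibiting a full strong exceptional collection of line bundles and checking split-generation directly via the microlocal Morse lemma. Your alternative strategy is not unreasonable, but, as you note yourself, proving that the candidate generators split-generate all of $\cSh_{\Lambda_\Sigma}(T^n)$ is exactly the hard content, and invoking Hille--Perling only transfers the difficulty; it does not discharge it.

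For the cragged case your description of the mechanism is off. ``Cragged'' is a combinatorial condition on the fan (introduced in Treumann's paper) that controls how $\Lambda_\Sigma$ can be sliced; it is not synonymous with a Zariski cover of $X_\Sigma$ by two toric opens. Scherotzke--Sibilla's argument in \cite{SS} is organized around King's conjecture and probe sheaves, not around a recollement coming from a two-piece open cover. The step you flag as ``the delicate point'' --- matching the open/closed decomposition on $T^n$ with Zariski gluing on $X_\Sigma$ --- is in fact where the proposal would break down, because there is no such two-piece decomposition attached to a cragged fan in general. As it stands, the proposal is a plausible-looking plan with the decisive steps left as acknowledged gaps, and those gaps are precisely where the cited proofs do genuinely nontrivial work of a different kind.
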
\noindent

We conjecture a variant of Conjecture~\ref{conj:ccc} to include non-complete $\Sigma$'s. Note that there exists a fully faithful morphism $\kappa_\Sigma\colon \Coh X_\Sigma\rightarrow \lSh_{\Ls}(T^n)$ by the construction of $\kappa_\Sigma$ even when $\Sigma$ is not complete.
\begin{conjecture}\label{conj:gccc}
	The morphism $\kappa_\Sigma$ induces a quasi-equivalence
	\begin{equation}
	\kappa_\Sigma\colon  \Coh X_\Sigma\xrightarrow{\simeq} \wSh_{\Ls}(T^n).
	\end{equation}
\end{conjecture}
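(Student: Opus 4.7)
The plan is to derive the statement from its stronger counterpart Conjecture~\ref{introqccc} (the quasi-coherent form) by passing to compact objects, once the latter has been established by the methods of \cite{Vaintrob, Kuw2} or, in the case $\dim\Sigma=2$, by the appendix. Three ingredients are needed: that $\Coh X_\Sigma$ coincides with the full subcategory of compact objects in $\QCoh X_\Sigma$; that $\wSh_\Ls(T^n)$ is by definition the full subcategory of compact objects in $\lSh_\Ls(T^n)$; and that a quasi-equivalence of compactly generated dg categories restricts to an equivalence of compact objects. The first holds because $X_\Sigma$ is smooth and Noetherian, so the compacts of $\QCoh$ are perfect complexes, which coincide with bounded coherent complexes in the smooth case. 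Combining these, the equivalence of Conjecture~\ref{introqccc} restricts to the required equivalence on compacts, and since that equivalence extends $\kappa_\Sigma$, its restriction is $\kappa_\Sigma$ itself.

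For a direct attack that does not presuppose Conjecture~\ref{introqccc}, I would instead work with $\kappa_\Sigma$ itself. Since $\kappa_\Sigma$ is already known to be fully faithful into $\lSh_\Ls(T^n)$, the task splits into two parts: showing the essential image is contained in $\wSh_\Ls(T^n)$, and showing that it split-generates. For containment, one verifies compactness of the images of a set of generators of $\Coh X_\Sigma$, for instance the line bundles $\mathcal{O}(D)$ associated to torus-invariant divisors, whose images under $\kappa_\Sigma$ are standard constructible sheaves on $T^n$; compactness can then be tested against filtered colimits of such standard sheaves. For split-generation, one appeals to Nadler's lemma that $\wSh_\Ls(T^n)$ is split-generated by the microlocal skyscrapers $F_{p,f}$ at smooth points $p \in \Lambda_{\Sigma,\mathrm{reg}}$.

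The hard part is the explicit identification of these microlocal skyscrapers with objects in the essential image of $\kappa_\Sigma$. The regular locus of $\Ls$ stratifies into pieces indexed by pairs of cones $(\sigma,\tau)$ with $\tau$ a facet of $\sigma$, and a microlocal skyscraper at such a point should correspond, up to shift and summand, to the image under $\kappa_\Sigma$ of the structure sheaf of a torus-invariant subvariety, or to a cone of a short exact sequence of such images. Producing this identification at every regular point requires a microlocal computation using the explicit formula for $\kappa_\Sigma$ on each toric stratum together with a careful choice of test functions realizing the microlocal stalk; this computation, together with a ``constructible equals wrapped'' comparison in the complete case, is what drives the correspondence.
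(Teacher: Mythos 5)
Your first paragraph matches the paper's own treatment of this statement. After stating Conjecture~\ref{conj:gccc}, the authors remark only that ``by taking compact objects of \eqref{qccceq}, Conjecture~\ref{conj:gccc} is equivalent to Conjecture~\ref{conj:qccc}.'' You correctly supply the three ingredients the paper leaves tacit: that $\Coh X_\Sigma$ is the full subcategory of compacts in $\QCoh X_\Sigma$ (smooth, Noetherian, so perfect $=$ bounded coherent), that $\wSh_\Ls(T^n)$ is \emph{defined} as the subcategory of compacts of $\lSh_\Ls(T^n)$, and that a quasi-equivalence of compactly generated categories restricts to one on compacts. That is exactly the paper's reduction, spelled out.

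Your second and third paragraphs propose a direct route the paper does not pursue. The paper defers any unconditional proof of the quasi-coherent equivalence to \cite{Vaintrob, Kuw2}; the only case it proves itself is $\dim\Sigma = 2$, and there the argument in the appendix is of a third kind: use the known CCC for complete toric surfaces (Theorem~\ref{thm:kuwagaki}), prove ``constructible $=$ wrapped'' for the complete fan directly (Theorem~\ref{compactness}), pass to ind-categories to get Conjecture~\ref{conj:qccc} for the completion, and then peel off rays inductively via Theorem~\ref{main}. Your microlocal-skyscraper sketch is plausible in outline, but, as you yourself flag, the identification of the skyscrapers $F_{p,f}$ with images of $\kappa_\Sigma$ at every regular point is precisely the hard content that the paper delegates elsewhere; as a free-standing proof it therefore has a genuine gap, whereas the reduction in your first paragraph is complete conditional on the external input (or on the appendix in dimension two).
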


There is also a ``quasi-version'' (or ind-version) of the above conjecture.
\begin{conjecture}\label{conj:qccc}
	The morphism $\kappa_\Sigma$ induces a quasi-equivalence
	\begin{equation}\label{qccceq}
	\kappa_\Sigma\colon \QCoh X_\Sigma\xrightarrow{\simeq} \lSh_\Ls(T^n)
	\end{equation}
\end{conjecture}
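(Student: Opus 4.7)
The plan is to reduce Conjecture~\ref{conj:qccc} to its compact analogue Conjecture~\ref{conj:gccc} via Ind-completion. First I would identify both sides of \eqref{qccceq} with the Ind-completions of their subcategories of compact objects: $\QCoh X_\Sigma \simeq \Ind(\Coh X_\Sigma)$, which is standard since $X_\Sigma$ is Noetherian, and $\lSh_\Ls(T^n) \simeq \Ind(\wSh_\Ls(T^n))$. The latter requires cocompleteness of $\lSh_\Ls(T^n)$, which follows from the stability of the microsupport condition under filtered colimits (checkable stalkwise through the local cohomology formula, using that the test slices $\{f\ge 0\}\cap B$ are themselves constructible), together with compact generation by $\wSh_\Ls(T^n)$, which is immediate from Nadler's split-generation lemma since the microlocal skyscrapers $F_{p,f}$ jointly detect the zero object via their corepresented microstalk functors.

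Second, I would check that $\kappa_\Sigma$ already takes $\Coh X_\Sigma$ into $\wSh_\Ls(T^n)$. This can be verified from the explicit recipe for $\kappa_\Sigma$ on equivariant line bundles in terms of constructible sheaves supported on images of convex polyhedra; such sheaves are constructible and hence compact in $\lSh_\Ls(T^n)$.

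Third, I would attack the compact statement, Conjecture~\ref{conj:gccc}: that the induced morphism $\kappa_\Sigma\colon \Coh X_\Sigma \to \wSh_\Ls(T^n)$ is a quasi-equivalence. Full faithfulness is already known, so the content is essential surjectivity. By Nadler's split-generation lemma it suffices to realize every microlocal skyscraper $F_{p,f}$ with $p$ in the smooth locus of $\Ls$ as a direct summand of $\kappa_\Sigma(\mathcal{F})$ for some coherent $\mathcal{F}$. Each smooth point $p$ lies over a unique cone $\sigma \in \Sigma$, and one expects $F_{p,f}$ to correspond, up to a twist by the weight of the microlocalization, to the $\kappa_\Sigma$-image of the pushforward of the structure sheaf of the closed toric stratum $V(\sigma) \subset X_\Sigma$. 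Applying $\Ind(-)$ to the resulting quasi-equivalence then yields Conjecture~\ref{conj:qccc}.

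The main obstacle is Step 3, and specifically producing this identification in a form that also handles non-complete $\Sigma$. In the complete case one can try to bootstrap from known cases of CCC (e.g.\ \cite{Kuw, Tr, SS}) by additionally identifying $\cSh_\Ls(T^n)$ with $\wSh_\Ls(T^n)$ in favorable situations. In the non-complete case the natural strategy is to embed $\Sigma$ into a complete refinement $\widetilde{\Sigma}$ and track how both sides restrict; however, the A-side restriction is essentially the localization statement of Theorem~\ref{intromain}, which itself presupposes the conjecture, so the reduction must be carried out by hand to avoid circularity. The $2$-dimensional appendix of this paper sidesteps this precisely by verifying $\cSh_\Ls = \wSh_\Ls$ directly for complete $2$-dimensional fans and then transporting along the known $2$-dimensional CCC of \cite{Kuw}.
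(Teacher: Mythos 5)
The paper states this as a conjecture and does \emph{not} prove it in full generality; for the general case it cites the sketch of Vaintrob and the independent proof in~\cite{Kuw2}, and the only self-contained argument is the $\dim\Sigma=2$ case in the appendix. Your initial reduction---replacing both sides by $\Ind$ of their compact objects and thereby passing between Conjecture~\ref{conj:qccc} and Conjecture~\ref{conj:gccc}---is exactly the paper's remark that these two conjectures are equivalent, and your Step~2 is in substance Proposition~\ref{constiswrapped} combined with the fact that $\kappa_\Sigma(\Coh X_\Sigma)\subset\cSh_{\Ls}(T^n)$.

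For Step~3, however, your route to essential surjectivity departs from the paper's. You propose to realize each microlocal skyscraper $F_{p,f}$ as a summand of $\kappa_\Sigma(\cF)$ for an explicit coherent $\cF$ (a twist of a pushforward of some $\cO_{V(\sigma)}$). The paper never attempts such a matching, and it is not at all clear it holds as stated---the images of point structure sheaves (Proposition~\ref{points}) are degenerate local systems of a rather different flavor from microlocal skyscrapers. What the appendix does instead, for $\dim\Sigma=2$, is prove directly (Theorem~\ref{compactness}) that for complete $\Sigma$ every compact object of $\lSh_{\Ls}(T^2)$ is already constructible, so $\wSh_{\Ls}(T^2)=\cSh_{\Ls}(T^2)$; essential surjectivity then collapses to the already-known constructible CCC of Theorem~\ref{thm:kuwagaki}. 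This buys you a route that uses $\kappa_\Sigma$ only through established CCC rather than through any explicit computation of microstalk corepresentatives, which is likely the harder problem.

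One correction: your worry about circularity in the non-complete case is misplaced. The appendix \emph{does} invoke Theorem~\ref{main} to pass from a complete $\widetilde{\Sigma}$ to a subfan $\widetilde{\Sigma}_c^\rho$, and this is not circular, because Theorem~\ref{main} assumes Conjecture~\ref{conj:qccc} only for the \emph{ambient} fan, which is established first via Theorem~\ref{compactness} plus Theorem~\ref{thm:kuwagaki}. There is no by-hand bypass of the localization theorem to construct; once the complete case is settled, Theorem~\ref{main} is precisely the descent mechanism, and that is how the appendix concludes.
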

\begin{remark}\label{after}
	A proof of Conjecture~\ref{conj:qccc} is sketched in \cite{Vaintrob} and was independently proved in \cite{Kuw2} after the appearance of the first version of this paper.
	By taking compact objects of (\ref{qccceq}), Conjecture~\ref{conj:gccc} is equivalent to Conjecture~\ref{conj:qccc}.
\end{remark}

\begin{proposition}\label{constiswrapped}
	The objects of $\cSh_\Ls(T^n)$ are compact in $\lSh_\Ls(T^n)$.
\end{proposition}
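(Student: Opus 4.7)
The plan is to show that $\hom(F,-)$ preserves filtered colimits for any $F \in \cSh_\Ls(T^n)$, which is the definition of compactness in $\lSh_\Ls(T^n)$. I would decompose the global Hom as
\begin{equation}
\hom(F,G) \simeq R\Gamma(T^n, R\Hom(F,G))
\end{equation}
and treat the internal $R\Hom$ and the global sections separately.

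For the $R\Hom$ piece, the essential input is that $F$ is $\bR$-constructible: a subanalytic stratification of $T^n$ adapted to $F$ has finitely many strata, and on each stratum $F$ restricts to a local system valued in perfect complexes of $\bC$-vector spaces. Consequently every stalk $F_x$ is itself a perfect complex. Since stalks commute with filtered colimits in $\lSh(T^n)$, the stalk-level identification $R\Hom(F,G)_x \simeq R\Hom_\bC(F_x, G_x)$ reduces the commutation of $R\Hom(F,-)$ with filtered colimits to the elementary assertion that $R\Hom_\bC(V,-)$ preserves filtered colimits when $V$ is perfect. An equivalent and perhaps cleaner route is to use the Verdier-dual identification $R\Hom(F,G) \simeq \mathbf{D}F \otimes^L G$ valid for cohomologically constructible $F$, together with the fact that $\otimes^L$ preserves filtered colimits in each variable.

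For the global sections piece, I would use that $T^n$ is compact Hausdorff, so $R\Gamma(T^n,-) = R\Gamma_c(T^n,-)$, and compactly supported cohomology is well known to commute with filtered colimits of sheaves of modules on any locally compact Hausdorff space. Chaining the two commutations yields the desired equivalence
\begin{equation}
\hom(F, \colim_i G_i) \simeq \colim_i \hom(F, G_i).
\end{equation}

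The main obstacle is the rigorous justification of the stalk formula $R\Hom(F,G)_x \simeq R\Hom_\bC(F_x,G_x)$ for constructible $F$ and possibly unbounded $G \in \lSh_\Ls(T^n)$. For bounded constructible $F$ the formula is classical, resting on a local finite projective resolution of $F$ on each stratum, but one has to be mildly careful for unbounded $G$; this is a matter of the pseudo-coherence of $F$. The Verdier-dual route above sidesteps the issue entirely: the tensor product $\mathbf{D}F \otimes^L G$ manifestly commutes with filtered colimits in $G$, and the outer $R\Gamma(T^n,-)$ is handled by compactness of $T^n$, so the whole argument becomes essentially formal.
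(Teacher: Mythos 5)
Your proof is correct, but it takes a genuinely different route from the paper's. The paper fixes a stratification $\cS$ adapted to $\Ls$, invokes the quiver/path-algebra model of $\lSh_{\Lambda_\cS}(T^n)$ from Shende--Treumann--Zaslow, observes that a constructible $F$ corresponds to a perfect module over the path algebra and is therefore compact in $\lSh_{\Lambda_\cS}(T^n)$, and then deduces compactness in the full subcategory $\lSh_\Ls(T^n) \subset \lSh_{\Lambda_\cS}(T^n)$, which is closed under filtered colimits. That argument is short and purely structural, at the price of citing an external categorical model. Your argument instead works directly inside sheaf theory: decompose $\hom(F,-) \simeq R\Gamma(T^n, R\Hom(F,-))$, note that $R\Gamma(T^n,-) = R\Gamma_c(T^n,-)$ since $T^n$ is compact and hence commutes with filtered colimits, and handle the inner $R\Hom$ via the biduality isomorphism $R\Hom(F,G) \simeq R\Hom(F,\bC_{T^n}) \otimes^L G$ valid for cohomologically constructible $F$ (Kashiwara--Schapira, Chapter 6); the tensor product manifestly commutes with filtered colimits. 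This is more elementary in the sense of not appealing to the path-algebra model, and it makes visible exactly what constructibility and compactness of $T^n$ are each doing. Two small points: you should write the naive dual $\mathbf{D}'F := R\Hom(F,\bC_{T^n})$ rather than the Verdier dual $\mathbf{D}F = R\Hom(F,\omega_{T^n})$ in the tensor formula (on the manifold $T^n$ they differ only by a shift and orientation twist, so the argument survives either way); and both your argument and the paper's tacitly use that filtered colimits in $\lSh_\Ls(T^n)$ are computed as in the ambient category of (weakly constructible) complexes of sheaves, which is true but worth flagging since it is what lets one transfer compactness between the categories.
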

\begin{proof}
	Let $F \in \cSh_\Ls(T^n)$ be a constructible object.
	Take a stratification $\cS$ consisting of connected strata such that $\Lambda_\Sigma \subset \bigsqcup_{S \in \cS} T^*_{S}T^n=:\Lambda_\cS$. Then $\lSh_\Ls(T^n)$ is equivalent to a dg category of dg modules over a quiver path algebra (e.g., \cite{STZ}).
	Since $F$ is locally constant and has a finite-dimensional stalk on each stratum $S$, it is a perfect object in $\lSh_{\Lambda_\cS}(T^n)$.
	Since $\Ls \subset \Lambda_\cS$, the result follows.
\end{proof}

\begin{corollary}\label{gccctoccc}
	Conjecture~\ref{conj:ccc} for $\Sigma$ follows from Conjecture~\ref{conj:gccc} for $\Sigma$.
\end{corollary}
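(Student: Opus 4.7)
The plan is to observe that Corollary~\ref{gccctoccc} is a formal consequence of three inputs: the fully faithful embedding $\kappa_\Sigma \colon \Coh X_\Sigma \hookrightarrow \cSh_\Ls(T^n)$ that exists for complete $\Sigma$, the inclusion $\cSh_\Ls(T^n) \subset \wSh_\Ls(T^n)$ supplied by Proposition~\ref{constiswrapped}, and the quasi-equivalence $\kappa_\Sigma \colon \Coh X_\Sigma \xrightarrow{\simeq} \wSh_\Ls(T^n)$ assumed in Conjecture~\ref{conj:gccc}.

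First I would record the chain of fully faithful embeddings
\begin{equation}
\Coh X_\Sigma \hookrightarrow \cSh_\Ls(T^n) \hookrightarrow \wSh_\Ls(T^n) \hookrightarrow \lSh_\Ls(T^n),
\end{equation}
in which the first arrow is $\kappa_\Sigma$ (fully faithful by its construction recalled above \eqref{mor:ccc}), and the second arrow exists because Proposition~\ref{constiswrapped} shows that every object of $\cSh_\Ls(T^n)$ is compact in $\lSh_\Ls(T^n)$, so sits in the full subcategory $\wSh_\Ls(T^n)$ of compact objects. All intermediate categories being full subcategories of $\lSh_\Ls(T^n)$, full faithfulness is automatic throughout.

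Since $\kappa_\Sigma \colon \Coh X_\Sigma \to \cSh_\Ls(T^n)$ is already fully faithful, to establish Conjecture~\ref{conj:ccc} it suffices to verify essential surjectivity. Given $F \in \cSh_\Ls(T^n)$, regard $F$ as an object of $\wSh_\Ls(T^n)$. Conjecture~\ref{conj:gccc} then produces $E \in \Coh X_\Sigma$ together with an isomorphism $\kappa_\Sigma(E) \simeq F$ in $\wSh_\Ls(T^n)$. Because $\wSh_\Ls(T^n) \hookrightarrow \lSh_\Ls(T^n)$ is fully faithful, this isomorphism persists in $\lSh_\Ls(T^n)$, and hence — restricting to the full subcategory $\cSh_\Ls(T^n)$ containing both $F$ and $\kappa_\Sigma(E)$ — also in $\cSh_\Ls(T^n)$. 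Thus $F$ is in the essential image of $\kappa_\Sigma$.

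There is no substantive obstacle: the argument is purely a two-out-of-three style manipulation of fully faithful embeddings, and the only non-formal ingredient is Proposition~\ref{constiswrapped}, which has already been established. Completeness of $\Sigma$ enters only to ensure that the target of $\kappa_\Sigma$ can be taken to be $\cSh_\Ls(T^n)$ in the first place, matching the formulation of Conjecture~\ref{conj:ccc}.
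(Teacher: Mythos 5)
Your proof is correct and follows essentially the same route as the paper's own (much terser) argument: the paper simply notes that $\kappa_\Sigma|_{\Coh}$ is fully faithful into $\cSh_\Ls(T^n) \subset \wSh_\Ls(T^n)$, and this together with Conjecture~\ref{conj:gccc} forces $\kappa_\Sigma \colon \Coh X_\Sigma \to \cSh_\Ls(T^n)$ to be a quasi-equivalence. You have merely spelled out the essential-surjectivity step that the paper leaves implicit.
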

\begin{proof}
	This follows from the fact that the functor $\ks|_{\Coh}$ is a fully faithful functor into $\cSh\subset \wSh$ (\cite{Tr}). 
\end{proof}
In particular, wrapped constructible sheaves along $\Ls$ coincide with constructible sheaves along $\Ls$ for complete $\Sigma$ under Conjecture~\ref{conj:qccc}. In Appendix~\ref{sec:surface} of this paper, we give a proof of this coincidence for $\dim\Sigma=2$ without assuming Conjecture~\ref{conj:qccc}.

\subsubsection{Construction of $\kappa_\Sigma$}
First we recall the construction of $\kappa_\Sigma$ due to Fang--Liu--Treumann--Zaslow (\cite{FLTZ, FLTZ2}).

For $\sigma\in \Sigma$, we set
\begin{equation}
\begin{split}
\Theta(\sigma)&:=p_!\bC_{\Int(\sigma^\vee)}[n]\in \QCoh X_\Sigma, \\
\Theta'(\sigma)&:=i_{{U_\sigma}*}\cO_{\sigma}\in \lSh_{\Ls}(T^n),
\end{split}
\end{equation}
where $p\colon M_\bR\rightarrow T^n$ is the projection, $\bC_{\Int(\sigma^\vee)}$ is the constant sheaf on $\Int(\sigma^\vee)\subset M_\bR$ the interior of the polar dual of $\sigma$, $i_{U_\sigma}\colon U_\sigma\hookrightarrow X_\Sigma$ is the affine toric coordinate of $X_\Sigma$ corresponding to $\sigma$, and $\cO_{\sigma}$ is the structure sheaf of $U_\sigma$.
Note that our definition of $\Theta(\sigma)$ is slightly different from that of \cite[Definition~3.1]{FLTZ}.

For $\sigma, \tau\in \Sigma$, there exists an isomorphism
\begin{equation}
\begin{split}
H^i\hom^\bullet_{\lSh_\Ls(T^n)}(\Theta(\sigma),\Theta(\tau))\simeq \begin{cases}
\bC[\sigma^\vee\cap M] &\text{ when $\sigma\supset \tau$ and $i=0$,}\\
0  &\text{ otherwise}.
\end{cases} 
\end{split} 
\end{equation} 
For $m\in \sigma^\vee\cap M$, the corresponding morphism in the left-hand side of the above is given by 
\begin{equation}
\Theta(\sigma):=p_!\bC_{\Int(\sigma^\vee)}[n] \simeq p_!\bC_{\Int(\sigma^\vee+m)}[n]\xrightarrow{p_!(\iota_{\sigma^\vee+m, \tau^\vee})[n]}p_!\bC_{\Int(\tau^\vee)}[n]=:\Theta(\tau),
\end{equation}
where $\iota_{m+\sigma^\vee, \tau^\vee}$ is the canonical morphism $\bC_{\Int(\sigma^\vee+m)}\rightarrow \bC_{\Int(\tau^\vee+m)}$ induced by the inclusion $\sigma^\vee+m\hookrightarrow \tau^\vee$.

Similarly, for $\sigma, \tau\in \Sigma$, there exists an isomorphism
\begin{equation}
\begin{split}
H^i\hom^\bullet_{\QCoh X_\Sigma}(\Theta'(\sigma),\Theta'(\tau))\simeq \begin{cases}
\bC[\sigma^\vee\cap M] &\text{ when $\sigma\supset \tau$ and $i=0$,}\\
0  &\text{ otherwise}.
\end{cases} 
\end{split} 
\end{equation} 
For $m\in \sigma^\vee\cap M$, the corresponding morphism in the left-hand side of the above is given by the restriction morphism $\theta'_m\colon i_{U_\sigma*}\cO_\sigma\rightarrow i_{U_\tau*}\cO_\tau$.

We denote by $\Gamma(\Sigma)$ the dg category whose set of objects is $\Sigma$ and hom spaces are defined by
\begin{equation}
\begin{split}
\hom^i_{\Gamma(\Sigma)}(\Theta(\sigma),\Theta(\tau)):= \begin{cases}
\bC[\sigma^\vee\cap M] &\text{ when $\sigma\supset \tau$ and $i=0$,}\\
0  &\text{ otherwise},
\end{cases}
\end{split}
\end{equation}
with trivial differentials. Then there exist quasi-equivalent dg functors $\Gamma(\Sigma)\rightarrow \Theta_\Sigma$ and $\Gamma(\Sigma)\rightarrow \Theta'_\Sigma$, where $\Theta_\Sigma$ (resp. $\Theta'_\Sigma$) is the full subcategory of $\QCoh X_\Sigma$ (resp. $\lSh_\Ls(T^n)$) spanned by $\lc \Theta(\sigma)\relmid \sigma\in \Sigma\rc$ (resp. $\lc \Theta'(\sigma)\relmid \sigma\in \Sigma\rc$). 
Hence we have a quasi-equivalent morphism $\kappa_\Sigma\colon \Perf\Theta_\Sigma\rightarrow \Perf\Theta'_\Sigma$, where $\Perf\cA$ denotes the full subcategory of the category of dg modules over a dg category $\cA$ spanned by perfect modules.

By using $\mathrm{\check{C}ech}$ resolution, it follows that $\Coh X_\Sigma\subset \Perf\Theta_\Sigma$. Hence $\kappa_\Sigma$ is restricted to $\kappa_\Sigma\colon \Coh X_\Sigma\rightarrow \lSh_\Ls(T^n)$. We denote this morphism by $\kappa_\Sigma$. Since $\lSh_\Ls(T^n)$ is cocomplete and $\kappa_\Sigma$ is finite-colimit preserving (since it is induced from an equivalence), there also exists $\kappa_\Sigma\colon \QCoh X_\Sigma\rightarrow \lSh_\Ls(T^n)$ by abuse of notation.

\subsubsection{Properties of the morphism $\kappa_\Sigma$}
In this section, we list three properties of $\kappa_\Sigma$, which we will use later in this paper.

The first one is the naturality. 
Let $\Sigma'$ be a subfan of $\Sigma$. 
Then we have $\Lambda_{\Sigma'}\subset \Ls$, accordingly $\lSh_{\Lambda_{\Sigma'}}(T^n)\subset \lSh_{\Ls}(T^n)$. 
We also have $\Theta_{\Sigma'}\subset \Theta_{\Sigma}$, $\Theta'_{\Sigma'}\subset \Theta'_\Sigma$ and $\Gamma({\Sigma'})\subset\Gamma(\Sigma)$. 
Hence, by definition, $\kappa_{\Sigma'}$ can be obtained as the restriction of $\kappa_{\Sigma}$. 
In other words, we have the following proposition.
\begin{proposition}[\cite{FLTZ}]\label{naturality}
	\begin{equation}
	\kappa_{\Sigma}(\Coh X_{\Sigma'})\subset \lSh_{\Lambda_{\Sigma'}}(T^n).
	\end{equation} 
\end{proposition}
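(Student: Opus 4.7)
The plan is to exploit the functoriality of the construction of $\kappa_\Sigma$ under passage to subfans. All the dg categories appearing in that construction---$\Gamma(\Sigma)$, $\Theta_\Sigma$, and $\Theta'_\Sigma$---are organized as posets whose objects are the cones $\sigma\in\Sigma$ and whose hom-spaces depend only on the local data $(\sigma,\tau,\sigma^\vee\cap M)$, not on the ambient fan. Consequently, restricting to $\sigma\in\Sigma'$ produces fully faithful inclusions $\Gamma(\Sigma')\subset\Gamma(\Sigma)$, $\Theta_{\Sigma'}\subset\Theta_\Sigma$, and $\Theta'_{\Sigma'}\subset\Theta'_\Sigma$, and the quasi-equivalent bridges $\Gamma(\Sigma)\to\Theta_\Sigma$ and $\Gamma(\Sigma)\to\Theta'_\Sigma$ restrict to the corresponding bridges for $\Sigma'$. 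Therefore $\kappa_\Sigma|_{\Perf\Theta_{\Sigma'}}\simeq\kappa_{\Sigma'}\colon\Perf\Theta_{\Sigma'}\to\Perf\Theta'_{\Sigma'}$.

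Next I would verify that each generator $\Theta'(\sigma)=i_{U_\sigma*}\cO_\sigma$ with $\sigma\in\Sigma'$ already has microsupport contained in $\Lambda_{\Sigma'}$. This is a direct microlocal computation on $T^n$: the singular support of $\Theta'(\sigma)$ is built only from the faces $\tau\preceq\sigma$, giving
\begin{equation}
\MS(\Theta'(\sigma))\subset\bigcup_{\tau\preceq\sigma}p(\tau^\perp)\times(-\tau)\subset\Lambda_{\Sigma'}.
\end{equation}
Since $\lSh_{\Lambda_{\Sigma'}}(T^n)$ is a stable full subcategory of $\lSh_\Ls(T^n)$ closed under retracts, it contains all of $\Perf\Theta'_{\Sigma'}$.

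Finally I would reduce $\Coh X_{\Sigma'}$ to $\Perf\Theta_{\Sigma'}$ by the same \v{C}ech argument already cited for $\Coh X_\Sigma\subset\Perf\Theta_\Sigma$: the affine opens $\{U_\sigma\}_{\sigma\in\Sigma'}$ cover $X_{\Sigma'}$, so any coherent sheaf on $X_{\Sigma'}$ admits a finite \v{C}ech resolution whose terms are direct sums of the $\Theta(\sigma)$ for $\sigma\in\Sigma'$, which are exactly the generators of $\Theta_{\Sigma'}$. Chaining the three steps yields
\begin{equation}
\kappa_\Sigma(\Coh X_{\Sigma'})=\kappa_{\Sigma'}(\Coh X_{\Sigma'})\subset\Perf\Theta'_{\Sigma'}\subset\lSh_{\Lambda_{\Sigma'}}(T^n),
\end{equation}
which is the desired inclusion.

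I do not expect a genuine obstacle here, as the proposition is essentially formal once one has the construction of $\kappa_\Sigma$. The most delicate point is Step~1: one must check that the bridge quasi-equivalences can be chosen compatibly with the inclusion $\Sigma'\subset\Sigma$, so that $\kappa_{\Sigma'}$ really is the literal restriction of $\kappa_\Sigma$ rather than only equivalent to it up to a natural transformation. This compatibility, however, is immediate from the explicit description of the structure morphisms $\theta_m,\theta'_m$, which depend only on the pair $(\sigma,\tau)$ and the character $m\in\sigma^\vee\cap M$, not on the ambient fan.
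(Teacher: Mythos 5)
Your proof is correct and takes essentially the same approach as the paper: both exploit the functoriality of the FLTZ construction under passage to subfans, observing that the inclusions $\Gamma(\Sigma')\subset\Gamma(\Sigma)$, $\Theta_{\Sigma'}\subset\Theta_\Sigma$, $\Theta'_{\Sigma'}\subset\Theta'_\Sigma$ force $\kappa_{\Sigma'}$ to be the literal restriction of $\kappa_\Sigma$. You simply make explicit the microsupport estimate for the generators and the \v{C}ech reduction, both of which the paper leaves implicit in its appeal to ``the definition of $\kappa_\Sigma$.''
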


The second property is the monoidality. Let $m\colon T^n\times T^n\rightarrow T^n$ be the multiplication. We define the convolution product $\lSh(T^n)\times \lSh(T^n)\rightarrow \lSh(T^n)$ by $E* F:=m_!(E\boxtimes F)$. This operation defines a monoidal structure on $\lSh(T^n)$.

\begin{proposition}[\cite{FLTZ}]\label{prp:monoidality} For any $\cE, \cF\in \Coh X_\Sigma$, we have
	\begin{equation}
	\kappa_\Sigma(\cE \otimes \cF)=\kappa_{\Sigma}(\cE)*\kappa_{\Sigma}(\cF).
	\end{equation} 
\end{proposition}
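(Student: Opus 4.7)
The plan is to pass through the distinguished generators $\Theta'(\sigma) = i_{U_\sigma *}\cO_\sigma$ of $\QCoh X_\Sigma$. Both $\otimes$ on $\QCoh X_\Sigma$ and $\ast$ on $\lSh_\Ls(T^n)$ preserve colimits in each variable, and $\kappa_\Sigma\colon \QCoh X_\Sigma \to \lSh_\Ls(T^n)$ is colimit-preserving (as observed just before this proposition). Hence it suffices to verify the identity on the family $\{\Theta'(\sigma)\}_{\sigma \in \Sigma}$: every coherent sheaf is obtained as a finite colimit built from these objects via Čech resolutions (the same input used earlier to show $\Coh X_\Sigma \subset \Perf \Theta_\Sigma$), and the statement then extends to all of $\Coh X_\Sigma$ by bilinearity.

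On the algebraic side, smoothness of $\Sigma$ gives $U_\sigma \cap U_\tau = U_{\sigma \cap \tau}$, so
\[
\Theta'(\sigma) \otimes \Theta'(\tau) \simeq i_{U_{\sigma \cap \tau}*}\cO_{U_{\sigma \cap \tau}} = \Theta'(\sigma \cap \tau).
\]
The task reduces to the analogous identity $\Theta(\sigma) \ast \Theta(\tau) \simeq \Theta(\sigma \cap \tau)$ on the constructible side, where $\Theta(\sigma) = p_!\bC_{\Int(\sigma^\vee)}[n]$. Applying compactly-supported base change to the commutative square induced by $m \circ (p \times p) = p \circ s$, with $s \colon M_\bR \times M_\bR \to M_\bR$ the addition, yields
\[
\Theta(\sigma) \ast \Theta(\tau) \simeq p_! \, s_!\bigl(\bC_{\Int(\sigma^\vee)} \boxtimes \bC_{\Int(\tau^\vee)}\bigr)[2n].
\]
The core lemma is then that $s_!(\bC_A \boxtimes \bC_B) \simeq \bC_{A+B}[-n]$ whenever $A, B$ are nonempty open convex subsets of $M_\bR$. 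The stalk at $z$ of the left-hand side is $H^{\ast}_c(A \cap (z-B))$; since any nonempty open convex subset of $\bR^n$ is homeomorphic to $\bR^n$, this stalk is $\bC[-n]$ exactly when $z \in A+B$. Combined with $\Int(\sigma^\vee) + \Int(\tau^\vee) = \Int((\sigma \cap \tau)^\vee)$, this gives $\Theta(\sigma) \ast \Theta(\tau) \simeq \Theta(\sigma \cap \tau)$ as desired.

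Naturality in morphisms comes essentially for free from the combinatorial model $\Gamma(\Sigma)$: both monoidal structures on the subcategories $\Theta_\Sigma$ and $\Theta'_\Sigma$ act on generators through the same semigroup map $(\sigma^\vee \cap M) \times (\tau^\vee \cap M) \to (\sigma^\vee + \tau^\vee) \cap M$ given by addition, so the two products are identified by the same $\Gamma(\Sigma)$-bilinear structure transported across the equivalence $\Gamma(\Sigma) \simeq \Theta_\Sigma, \Theta'_\Sigma$. The principal obstacle I expect is upgrading the stalkwise description of $s_!(\bC_A \boxtimes \bC_B)$ to a genuine isomorphism of sheaves on $A+B$: this calls for a microlocal Morse or non-characteristic deformation argument sweeping $A+B$ by sublevel sets of a generic linear functional, together with the microsupport estimate $\MS(\bC_A \boxtimes \bC_B)$ living in the product of the polar cones of $A$ and $B$. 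The non-properness of both $p$ and $s$ means we must consistently use the compactly-supported version of base change.
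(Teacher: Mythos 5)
The paper does not prove this proposition at all --- it is cited directly to \cite{FLTZ} --- so there is no ``paper's proof'' to compare against. That said, your reconstruction is essentially the FLTZ argument and it is correct. One small reading note: the displayed definitions of $\Theta(\sigma)$ and $\Theta'(\sigma)$ in the paper appear to have their target categories swapped (a typo), and you have silently used the intended assignment, which is the right call.

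On the mathematics: the reduction to the generators $\Theta(\sigma)$ via cocontinuity of $\kappa_\Sigma$ and of both monoidal products is the right move; the computation $i_{U_\sigma *}\cO_{U_\sigma}\otimes^L i_{U_\tau *}\cO_{U_\tau}\simeq i_{U_{\sigma\cap\tau}*}\cO_{U_{\sigma\cap\tau}}$ is honest because each $i_{U_\sigma *}\cO_{U_\sigma}$ restricts on every affine chart to a localization, hence is flat, so no higher $\mathrm{Tor}$ appears; the base change $m\circ(p\times p)=p\circ s$ with $p_!$ everywhere is fine since you use $!$-functoriality throughout; and the cone identity $\Int(\sigma^\vee)+\Int(\tau^\vee)=\Int(\sigma^\vee+\tau^\vee)=\Int((\sigma\cap\tau)^\vee)$ holds because $\sigma^\vee$ and $\tau^\vee$ have nonempty interior. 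The one real technicality, which you correctly flag yourself, is upgrading the stalk computation $H^*_c(A\cap(z-B))\simeq \bC[-n]$ to an isomorphism $s_!(\bC_A\boxtimes\bC_B)\simeq\bC_{A+B}[-n]$ of complexes: the cleanest closing of this gap is to note that $s|_{A\times B}\colon A\times B\to A+B$ is a submersion with open convex (hence contractible) fibers admitting continuous local sections, so $s_!(\bC_{A\times B})[n]$ is locally constant of rank one on the contractible open set $A+B$ and therefore constant; alternatively one may sweep with the non-characteristic deformation lemma as you suggest. Finally, your remark that both monoidal structures are transported from the same semigroup map $(\sigma^\vee\cap M)\times(\tau^\vee\cap M)\to(\sigma^\vee+\tau^\vee)\cap M$ across the common model $\Gamma(\Sigma)$ is exactly the reason the isomorphism is natural, though to make this airtight one should phrase it as the existence of a monoidal structure on $\Gamma(\Sigma)$ (tensor on objects given by intersection of cones) making both comparison functors $\Gamma(\Sigma)\to\Theta_\Sigma$, $\Gamma(\Sigma)\to\Theta'_\Sigma$ monoidal.
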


The third property is naturality for direct products. Let $\Sigma$ and $\Sigma'$ be smooth fans. The product of two fans is defined as
\begin{equation}
\Sigma\times \Sigma':=\lc \sigma\times \sigma'\relmid \sigma\in \Sigma, \sigma'\in \Sigma'\rc,
\end{equation}
which associates the product of toric varieties $X_\Sigma\times X_{\Sigma'}$ (e.g., \cite{CLS}). It is clear from the definitions that $\Gamma(\Sigma\times \Sigma')=\Gamma(\Sigma)\times \Sigma(\Sigma')$, $\Theta(\sigma)\boxtimes \Theta(\sigma')=\Theta(\sigma\times \sigma')$, and $\Theta'(\sigma)\boxtimes \Theta'(\sigma')=\Theta'(\sigma\times \sigma')$. As a consequence, we have the following.
\begin{proposition}\label{prp:product}
	In the above notation, we have
	\begin{equation}
	\kappa_{\Sigma\times \Sigma'}\simeq \kappa_\Sigma\times \kappa_{\Sigma'}.
	\end{equation}
\end{proposition}

\subsubsection{Mirrors of points in toric varieties}
In this section, we describe the image of the structure sheaves of points in toric varieties under the functor $\kappa_\Sigma$. This subject should be strongly related to the T-duality picture of mirror symmetry (\cite{FLTZ2, SYZ}).

For a point $x\in X_\Sigma$, take the smallest open toric subvariety $U_\sigma$ which contains $x$. 
Let $\check{\rho}_1,\dots, \check{\rho}_r$ be the 1-dimensional faces (rays) of $\sigma^\vee$ the polar dual of $\sigma$. Then the primitive generators of the $\check{\rho}_i$'s can be extended to a basis $m_1,\dots, m_n$ of $M$, where $m_1,\dots, m_r$ correspond to $\check{\rho}_1,\dots, \check{\rho}_r$, respectively. Then the open toric subvariety $U_\sigma$ can be identified as
\begin{equation}
U_\sigma=\Spec\bC[X_1,\dots, X_r, X_{r+1}^\pm,\dots , X_n^\pm]\simeq \bC^r\times (\bC^*)^{n-r}.
\end{equation} 
In the following, we will view the right-hand side of the above isomorphism as the coordinate of $U_\sigma$.
Since $U_\sigma$ is the smallest open toric variety containing $x$, we have $x=(0,\dots, 0, x_{r+1},\dots, x_n )$ and $x_i\neq 0$ for $i\geq r+1$.

Since $m_1,\dots, m_n$ is a basis of $M$, we can factorize $M_\bR$ and $T^n$ as $M_\bR\simeq \bR^r\times \bR^{n-r}$ and $T^n\simeq T^r\times T^{n-r}$. Let $\bC_{(X_1,\dots, X_r)}$ be the proper push-forward to $T^r$ of the constant sheaf on $\bR^r$ supported on $(0,1]^r$. Further, let $\bC_{(X_{r+1}-x_{r+1},\dots, X_{n}-x_{n})}$ be the locally constant sheaf on $T^{n-r}$ whose monodromy along $m_i$ corresponds to the multiplication by $x_i$. Here we take a set of generators of the fundamental group $\pi_1(T^{n-r})$ as $m_{r+1},\dots, m_n$. We view the exterior product $P_x:=\bC_{(X_1,\dots, X_r)}\boxtimes \bC_{(X_{r+1}-x_{r+1},\dots, X_{n}-x_{n})}$ as a sheaf on $T^n=M_\bR/M$.

\begin{proposition}\label{points} Under the above notation, we have
	\begin{equation}
	\kappa_{\Sigma}(\cO_x)\simeq P_x.
	\end{equation} 
\end{proposition}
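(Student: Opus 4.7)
The plan is to combine the naturality (Proposition~\ref{naturality}) and the product formula~\eqref{product} of $\kappa_\Sigma$ to reduce the computation to one-dimensional base cases, which are then handled by two-term Koszul resolutions of the relevant skyscraper sheaves.

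Since $\cO_x$ is supported at $x \in U_\sigma$, it lies in $\Coh X_{\Sigma_\sigma}$, where $\Sigma_\sigma \subset \Sigma$ is the subfan of faces of $\sigma$; Proposition~\ref{naturality} lets me replace $\Sigma$ by $\Sigma_\sigma$. The chosen basis $m_1,\dots,m_n$ factors $\Sigma_\sigma = \Sigma_1 \times \Sigma_2$, where $X_{\Sigma_1} = \bC^r$ corresponds to the positive orthant spanned by the rays dual to $m_1,\dots,m_r$ and $X_{\Sigma_2} = (\bC^*)^{n-r}$ corresponds to the trivial fan $\{0\}$. Accordingly, $x = (0_{\bC^r}, y)$ and $\cO_x \simeq \cO_{0_{\bC^r}} \boxtimes \cO_y$, so by~\eqref{product} it suffices to identify $\kappa_{\Sigma_1}(\cO_{0_{\bC^r}})$ with $\bC_{(X_1,\dots,X_r)}$ and $\kappa_{\Sigma_2}(\cO_y)$ with $\bC_{(X_{r+1}-x_{r+1},\dots,X_n-x_n)}$. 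Each of these factors further splits via another application of~\eqref{product} into one-variable cases.

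For the one-variable affine case $X_\Sigma = \bC$, $\Sigma = \{0, \bR_{\geq 0}\}$, I apply $\kappa_\Sigma$ to the resolution $0 \to \cO_\bC \xrightarrow{\cdot X} \cO_\bC \to \cO_0 \to 0$: each copy of $\cO_\bC = \Theta'(\bR_{\geq 0})$ is sent to $\Theta(\bR_{\geq 0}) = p_!\bC_{(0,\infty)}[1]$, and the map $\cdot X$ (the element $m=1\in\sigma^\vee\cap M$) corresponds, by the construction of $\kappa_\Sigma$, to the composition of the translation isomorphism $p_!\bC_{(0,\infty)} \simeq p_!\bC_{(1,\infty)}$ with the open inclusion into $p_!\bC_{(0,\infty)}$. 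The cone of this map is the proper pushforward to $T^1$ of a constant sheaf on a fundamental half-open interval, matching the factor $\bC_{(X_1)}$ of $P_0$. The analogous argument for the one-variable torus case $X_\Sigma = \bC^*$, $\Sigma = \{0\}$, uses $0 \to \cO_{\bC^*} \xrightarrow{\cdot(X - y)} \cO_{\bC^*} \to \cO_y \to 0$; on the sheaf side, the map combines translation by one with scaling by $y$ arising from the element $X-y \in \bC[M]$, and the resulting cone is the rank-one local system on $T^1$ with monodromy $y$.

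The main obstacle will be the bookkeeping in these cone computations: carefully translating each monomial $X^m$ acting on $\Theta'(\sigma)$ into the corresponding $\theta_m$ on $\Theta(\sigma) = p_!\bC_{\Int(\sigma^\vee)}[n]$, and matching shift and orientation conventions for the proper pushforward so that the cones of the Koszul complexes reproduce exactly the prescribed half-open interval pushforward and the prescribed monodromy local system appearing in $P_x$.
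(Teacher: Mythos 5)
Your proof follows essentially the same route as the paper's: both reduce via the Koszul resolution of $\cO_x$ on $U_\sigma$ to a tensor product of two-term one-variable resolutions, identify the cone in the one-dimensional cases, and then invoke the product formula~\eqref{product} to assemble $P_x$. The only difference is presentational — you spell out the reduction to $\Sigma_\sigma$ via Proposition~\ref{naturality} and separate the $\bC$ and $\bC^*$ factors explicitly, whereas the paper treats them together under the single notation $\kappa_{\Sigma_\bC}$ — but the substance is identical.
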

Under the T-duality picture, the big algebraic torus $(\bC^*)^n$ in $X_\Sigma$ should be viewed as the non-degenerate torus fibration structure (\cite{FLTZ2}) and the structure sheaves on these points correspond to the local systems on $T^n$ which are expected to be equivalent to the torus fibers with a $U(1)$-local system in the Fukaya category of $T^*T^n$. On the other hand, points in toric divisors correspond to {\em degenerate local systems} which are ``local systems with $0$ or $\infty$ monodromies'' and such sheaves have non-compact Lagrangians as these microsupports. This is expected to have some relation to other descriptions of SYZ mirror to non-flat torus fibrations (e.g., \cite{AAK}).  

\begin{proof}
	On $U_\sigma$, we have the Koszul resolution of the skyscraper sheaf $\cO_x$ on $x$:
	\begin{equation}
	0\leftarrow \cO_x\leftarrow \cO_\Us\leftarrow \cO_{\Us}^{\oplus n}\leftarrow \bigwedge^2(\cO_\Us^{\oplus n})\leftarrow \bigwedge^3(\cO_\Us^{\oplus n}) \leftarrow \cdots \leftarrow \cO_\Us \leftarrow 0.
	\end{equation} 
	As $R$-modules ($R:=\bC[X_1,\dots, X_r, X_{r+1}^\pm,\dots , X_n^\pm]$), this resolution is 
	\begin{equation}
	0\leftarrow R/(X_1,\dots, X_r, X_{r+1}-x_{r+1},\dots, X_n-x_n)\leftarrow R\leftarrow R^{\oplus n}\leftarrow \cdots \leftarrow R\leftarrow 0
	\end{equation} 
	and the morphism is the internal differential by $\iota_{\sum{X_i-x_i}}$. 
	This resolution is the exterior tensor product of the resolutions 
	\begin{equation}
	0\leftarrow \bC[X_i]/(X_i-x_i)\leftarrow \bC[X_i]\leftarrow \bC[X_i]\leftarrow 0
	\end{equation} 
	where the morphism $\bC[X_i]\rightarrow \bC[X_i]$ is given by the multiplication of $X_i-x_i$. 
	Let $\Sigma_{\bA^1}$ be the fan of the affine toric variety $\bA^1=\bC$. Then we have
	\begin{equation}
	\kappa_{\Sigma_{\bA^1}}(\bC[X_i]/(X_i-x_i))= \bC_{(X_i-x_i)}.
	\end{equation} 
	The Proposition~\ref{prp:product} implies $\kappa_{\Sigma_{\bA^n}}=\kappa_{\Sigma_{\bA^1}}^{\times n}$, which gives the conclusion.
\end{proof}

\subsection{Seidel's categorical localization of $\Coh X$}

\begin{proposition}[{\cite[p.\ 87 Complements of divisors]{Seidelnat}}]\label{prp:localization}
	Let $X$ be a smooth toric variety over $\bC$, $D \subset X$ be a toric divisor, and $U:=X \setminus D$ be its complement.
	\begin{enumerate}
		\item Let $s$ be the canonical section of the line bundle $\cO(D)$.
		Then for any $\cF \in \Coh X$, one has
		\begin{equation}
		j_* \cF|_U
		\simeq 
		\colim_l \cF \otimes \cO(D)^{\otimes l},
		\end{equation}
		where $j \colon U \hookrightarrow X$ is the inclusion and the colimit is formed with respect to multiplication with $s$.
		\item Set
		\begin{equation}
		\Coh_D X
		:=
		\{\cF \in \Coh X \mid \Supp \cF \subset D\}.
		\end{equation}
		Then the restriction functor 
		\begin{equation}
		\Coh X/\Coh_D X
		\lto \Coh U
		\end{equation}
		is a quasi-equivalence of dg categories. 
	\end{enumerate}
\end{proposition}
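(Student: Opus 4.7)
The plan is to establish (i) by a local computation and then deduce (ii) from it.

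For (i), the statement is local on $X$, so I cover $X$ by affine toric opens $V = \Spec A$ on which $D \cap V$ is cut out by a single regular function $f \in A$; this is possible because $D$ is Cartier and $X$ is smooth. Under the canonical trivialization $\cO(D)|_V \simeq \cO_V$ sending $s|_V$ to $f$, the multiplication-by-$s$ map $\cF \otimes \cO(D)^{\otimes l} \to \cF \otimes \cO(D)^{\otimes (l+1)}$ becomes multiplication by $f$ on $\cF|_V$. Writing $\cF|_V = \widetilde{M}$ for a finitely generated $A$-module $M$, the filtered colimit of $M \xrightarrow{f} M \xrightarrow{f} \cdots$ is the localization $M[1/f] = \Gamma(V; j_*(\cF|_U))$. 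These local identifications are compatible with the transition data of $\cO(D)$ and so glue to the claimed global isomorphism in $\QCoh X$.

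For (ii), first observe that $j^* \colon \Coh X \to \Coh U$ vanishes on $\Coh_D X$, so it factors through a functor $\bar j^* \colon \Coh X/\Coh_D X \to \Coh U$. Essential surjectivity reduces to the standard fact that any coherent sheaf on an open subscheme of a Noetherian scheme admits a coherent extension (take $j_* \cG$ and choose a coherent subsheaf whose restriction recovers $\cG$). For fully faithfulness, the crucial observation is that multiplication by $s$ gives a morphism $\cG \to \cG \otimes \cO(D)$ whose cone is supported on $D$ (since $s$ vanishes precisely along $D$) and therefore becomes invertible in $\Coh X/\Coh_D X$. Iterating produces a tower of isomorphisms in the quotient, and one computes
\begin{align*}
\Hom_{\Coh X/\Coh_D X}(\cF, \cG)
&\simeq \colim_l \Hom_{\Coh X}(\cF, \cG \otimes \cO(D)^{\otimes l}) \\
&\simeq \Hom_{\QCoh X}\!\bigl(\cF,\, \colim_l \cG \otimes \cO(D)^{\otimes l}\bigr) \\
&\simeq \Hom_{\QCoh X}(\cF,\, j_*(\cG|_U)) \\
&\simeq \Hom_{\Coh U}(\cF|_U,\, \cG|_U),
\end{align*}
using, in order, cofinality of the multiplication-by-$s^l$ system in the localizing class, compactness of $\cF$ in $\QCoh X$ (being coherent on a Noetherian scheme), part (i), and the $(j^*, j_*)$-adjunction.

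The main obstacle is the cofinality assertion in the first step: one must verify that every morphism $\cG \to \widetilde{\cG}$ in $\Coh X$ with cone in $\Coh_D X$ is dominated by some multiplication-by-$s^l$ map $\cG \to \cG \otimes \cO(D)^{\otimes l}$. This ultimately boils down to the Noetherian finiteness statement that any coherent sheaf supported on $D$ is annihilated by a sufficiently large power of the ideal sheaf of $D$, hence by $s^l$ for $l \gg 0$; this is where the coherence hypothesis is essential, and the analogous assertion would fail if one worked with arbitrary quasi-coherent sheaves on $D$.
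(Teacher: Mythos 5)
Your proposal is correct, and it takes a genuinely different, more self-contained route than the paper. The paper's proof of Proposition~\ref{prp:localization} simply cites \cite{Seidelnat} for the quasi-projective case, and then handles general (non-quasi-projective) toric varieties by a separate reduction step: the star neighborhood of the ray corresponding to $D$ gives a quasi-projective open toric subvariety $X'$ containing $D$, and one glues the quasi-projective argument on $X'$ with the trivial statement on $X \setminus X'$ (which misses $D$ entirely). You instead prove (i) directly by the local computation $\colim(M \xrightarrow{f} M \xrightarrow{f} \cdots) \simeq M[1/f]$ on affine opens where $D$ is principal, which needs only that $D$ is Cartier and so works uniformly without any quasi-projectivity hypothesis; and for (ii) you reduce fully faithfulness to the cofinality of the multiplication-by-$s^l$ system among maps $\cG \to \cG'$ with cone in $\Coh_D X$, identify the Verdier-quotient Hom with $\colim_l \Hom(\cF, \cG \otimes \cO(D)^{\otimes l})$, and then conclude by compactness of $\cF$ (which on a smooth variety is guaranteed because coherent sheaves are perfect), part (i), and the $(j^*,j_*)$-adjunction. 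The cofinality step is indeed the crux, and your justification is sound: given $\cG \to \cG'$ with cone $C \in \Coh_D X$, functoriality of multiplication by $s^l$ reduces the vanishing of $C[-1] \to \cG \xrightarrow{s^l} \cG(lD)$ to the vanishing of $s^l \colon C[-1] \to C[-1](lD)$, which holds for $l \gg 0$ since $C$ is a bounded coherent complex annihilated by a power of the ideal sheaf of $D$. The net effect is that your argument eliminates the paper's two-stage structure (cite Seidel, then reduce to quasi-projective) at the modest cost of writing out the local and cofinality details; the paper's version is shorter but relies on the reader accepting both Seidel's result and the assertion that the star of a ray yields a quasi-projective open.
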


\begin{proof} First, we assume that $X$ is quasi-projective.
	
	(i) By \cite{Seidelnat}, we have 
	\begin{equation}
	\colim_l \Gamma(V;\cF \otimes \cO(D)^{\otimes l})
	\simeq 
	\Gamma(U \cap V;\cF|_U)
	\end{equation}
	for any open set $V$ of $X$ and obtain the desired isomorphism.
	
	\noindent (ii) This is given in \cite{Seidelnat} for quasi-projective cases.
	
	Finally, we let $X$ be a (not necessarily quasi-projective) toric variety. In our case, $D$ is not arbitrary but a toric divisor. Hence the star neighborhood of the corresponding ray to $D$, i.e., the subfan consisting of the faces of the maximal-dimensional cones containing the ray to $D$ in $\Sigma$ gives a open quasi-projective toric subvariety $X'$ and the complement of the star neighborhood never intersects $D$. Hence, we can apply the above argument to $X'$ and glue it to the complement of $X'$.
\end{proof}

\section{Main theorem}\label{sec:main}

Let $\Sigma$ be a smooth fan in $N_\bR$ and $\rho \in \Sigma$ be a 1-dimensional cone in $\Sigma$. 
Denote the toric divisor defined as the closure of the $T$-orbit $T_\rho$ associated with $\rho$ by $D_\rho$.
Set $
\Sigma^\rho_c:=
\Sigma
\setminus 
\{ \tau \in \Sigma \mid \rho \preceq \tau \}
$, the complement of the star neighborhood of $\rho$, and denote the open inclusion by $i\colon X_{\Sigma_c^\rho}=X_\Sigma\bs D_\rho\hookrightarrow X_\Sigma$.

Further, let $\cB_\rho$ be the subcategory of $\wSh_\Ls(T^n)$ split-generated by microlocal skyscraper sheaves $\lc F_{p,f}\relmid p\in (\Ls\backslash \Lambda_{\Sigma^\rho_c})_\mathrm{reg} \rc$ and $\Coh_{D_\rho} X_\Sigma$ be the full dg subcategory of $\Coh X_\Sigma$ consisting of objects supported in $D_\rho$. 
\begin{theorem}\label{main}Assuming Conjecture~\ref{conj:qccc} for $\Sigma$, there is a quasi-equivalence
	\begin{equation}
	\wSh_{\Lambda_{\Sigma^\rho_c}}(T^n)\simeq \wSh_\Ls(T^n)/\cB_\rho
	\end{equation}
	such that the induced morphism $\wSh_\Ls(T^n)\rightarrow \wSh_{\Lambda_{\Sigma^\rho_c}}(T^n)$ fits into the diagram
	\begin{equation}
	\xymatrix@C=40pt{
		\Coh X_\Sigma \ar[r]^-{\kappa_\Sigma} \ar[d]_{i^*} &\ar[d] \wSh_\Ls(T^n)\\
		\Coh X_{\Sigma^\rho_c} \ar[r]_-{\kappa_{\Sigma^\rho_c}} & \wSh_{\Lambda_{\Sigma_c^\rho}}(T^n). 
	}
	\end{equation}
\end{theorem}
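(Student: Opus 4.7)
The plan is to transport Seidel's categorical localization (Proposition~\ref{prp:localization}) across the equivalence $\kappa_\Sigma\colon\Coh X_\Sigma\simeq\wSh_\Ls(T^n)$ furnished by the assumed Conjecture~\ref{conj:qccc} together with Corollary~\ref{gccctoccc} and Proposition~\ref{constiswrapped}. Two identifications are needed: (a) the thick subcategory $\kappa_\Sigma(\Coh_{D_\rho}X_\Sigma)\subset\wSh_\Ls(T^n)$ coincides with $\cB_\rho$; and (b) the resulting quotient is quasi-equivalent to $\wSh_{\Lambda_{\Sigma^\rho_c}}(T^n)$ compatibly with $\kappa_{\Sigma^\rho_c}$.

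For the inclusion $\kappa_\Sigma(\Coh_{D_\rho}X_\Sigma)\subset\cB_\rho$, I note that $\Coh_{D_\rho}X_\Sigma$ is split-generated by the structure sheaves $\cO_x$ for $x\in D_\rho$. By Proposition~\ref{points} the image is the product sheaf $P_x$, whose explicit form lets one compute $\MS(P_x)$; a microlocal Morse argument (Proposition~\ref{MML}) then produces a resolution of $P_x$ by microlocal skyscrapers at regular points of $\Ls\setminus\Lambda_{\Sigma^\rho_c}$, showing $P_x\in\cB_\rho$. For the reverse inclusion I use an orthogonality argument at the ind-level: for any $\cG\in\QCoh X_{\Sigma^\rho_c}$, the object $\kappa_\Sigma(j_*\cG)$ has microsupport in $\Lambda_{\Sigma^\rho_c}$ by a quasi-coherent extension of the naturality Proposition~\ref{naturality}, so for $p\in(\Ls\setminus\Lambda_{\Sigma^\rho_c})_{\mathrm{reg}}$,
\[
\Hom(F_{p,f},\kappa_\Sigma(j_*\cG))=\phi_{p,f}(\kappa_\Sigma(j_*\cG))=0.
\]
Via the equivalence $\kappa_\Sigma\colon\QCoh X_\Sigma\simeq\lSh_\Ls(T^n)$ and the adjunction $j^*\dashv j_*$, this forces $j^*\kappa_\Sigma^{-1}(F_{p,f})=0$, so $\kappa_\Sigma^{-1}(F_{p,f})$ is supported on $D_\rho$ and hence lies in $\Coh_{D_\rho}X_\Sigma$.

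Granted (a), Proposition~\ref{prp:localization}(ii) yields $\wSh_\Ls(T^n)/\cB_\rho\simeq\Coh X_\Sigma/\Coh_{D_\rho}X_\Sigma\xrightarrow[i^*]{\simeq}\Coh X_{\Sigma^\rho_c}$. For (b), the composition $\Coh X_\Sigma\xrightarrow{\kappa_\Sigma}\wSh_\Ls(T^n)\to\wSh_\Ls(T^n)/\cB_\rho$ kills $\Coh_{D_\rho}X_\Sigma$ and descends through $i^*$ to a functor $\Coh X_{\Sigma^\rho_c}\to\wSh_\Ls(T^n)/\cB_\rho$; by Proposition~\ref{naturality} and the explicit $\Theta'$-construction of $\kappa$, this induced functor agrees with $\kappa_{\Sigma^\rho_c}$ and lands in $\wSh_{\Lambda_{\Sigma^\rho_c}}(T^n)$. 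Essential surjectivity onto $\wSh_{\Lambda_{\Sigma^\rho_c}}(T^n)$ follows because each microlocal skyscraper $F_{p,f}$ with $p\in(\Lambda_{\Sigma^\rho_c})_{\mathrm{reg}}$ is already an object of $\wSh_\Ls(T^n)$, and such skyscrapers split-generate $\wSh_{\Lambda_{\Sigma^\rho_c}}(T^n)$. The commutative square of the theorem is then built into this construction.

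The principal obstacle is the microlocal analysis in (a): explicitly describing $\MS(P_x)$ for $x\in D_\rho$ and constructing the required resolution by microlocal skyscrapers at points of $(\Ls\setminus\Lambda_{\Sigma^\rho_c})_{\mathrm{reg}}$. A secondary technical point is verifying that the induced quotient functor coincides with the independently defined $\kappa_{\Sigma^\rho_c}$ and that naturality extends to the quasi-coherent setting needed for the reverse inclusion in (a).
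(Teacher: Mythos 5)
Your proposal diverges substantially from the paper's proof and leaves the genuinely difficult part of the theorem unaddressed. The paper does not merely transport Seidel's localization through $\kappa_\Sigma$: the central content of Theorem~\ref{main} is that $\kappa_{\Sigma^\rho_c}$ is itself an equivalence onto $\wSh_{\Lambda_{\Sigma^\rho_c}}(T^n)$, i.e.\ that the coherent-constructible correspondence for the \emph{non-complete} fan $\Sigma^\rho_c$ follows from the assumed Conjecture~\ref{conj:qccc} for $\Sigma$. This is established via the explicit localization endofunctor $\beta(F)=\colim_l F*(F_D)^{*l}$ (the microlocal mirror of $j_*j^*$) together with Lemma~\ref{wrappedequivalence}, which shows that $\beta_\cN$ is a quasi-inverse to the natural map in the other direction. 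Your route instead deduces $\wSh_\Ls(T^n)/\cB_\rho\simeq\Coh X_{\Sigma^\rho_c}$ and then asserts this is identified with $\wSh_{\Lambda_{\Sigma^\rho_c}}(T^n)$ ``compatibly with $\kappa_{\Sigma^\rho_c}$'' — but the needed fact that $\kappa_{\Sigma^\rho_c}$ is an equivalence is precisely what must be proved, not quoted. Your closing appeal to essential surjectivity via microlocal skyscrapers $F_{p,f}$ with $p\in(\Lambda_{\Sigma^\rho_c})_{\mathrm{reg}}$ does not fix this: a microlocal skyscraper defined in $\lSh_{\Lambda_{\Sigma^\rho_c}}(T^n)$ represents $\phi_{p,f}$ on that smaller category, and is \emph{not} in general the same object as, nor even an object of, $\wSh_\Ls(T^n)$; compactness is not inherited by the inclusion $\lSh_{\Lambda_{\Sigma^\rho_c}}\hookrightarrow\lSh_\Ls$, and the skyscrapers for the two Lagrangians differ by a reflection functor (in the paper this reflection is exactly $\beta$).

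On the identification $\cN=\cB_\rho$, your plan also diverges. You propose the inclusion $\kappa_\Sigma(\Coh_{D_\rho}X_\Sigma)\subset\cB_\rho$ by resolving each $P_x$ ($x\in D_\rho$) by microlocal skyscrapers at points of $(\Ls\setminus\Lambda_{\Sigma^\rho_c})_{\mathrm{reg}}$ — which you acknowledge as the ``principal obstacle'' and do not carry out. In fact there is no need to carry it out: the paper proves Lemma~\ref{lem:vanish}, a clean microlocal-Morse orthogonality statement $^\perp\lSh_{\Lambda_{\Sigma^\rho_c}}(T^n)\cap\wSh_\Ls(T^n)=\cN$, and then in Lemma~\ref{divisorequivalence} observes that the skyscrapers $F_{p,f}$ ($p\in(\Ls\setminus\Lambda_{\Sigma^\rho_c})_{\mathrm{reg}}$) lie in $\cN$ and compactly generate $\overline{\cN}^c$, so they split-generate $\cN$. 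This sidesteps entirely the resolution you flag as difficult. (Your reverse inclusion $\cB_\rho\subset\cN$ via the adjunction $j^*\dashv j_*$ is essentially the ``converse'' half of Lemma~\ref{lem:vanish} and is fine in outline, though it should be phrased at the level of wrapped objects, using that $\kappa_\Sigma^{-1}F_{p,f}$ is coherent.) A further small issue: you invoke that $\Coh_{D_\rho}X_\Sigma$ is split-generated by skyscrapers $\cO_x$, $x\in D_\rho$; this is not used in the paper and is not obviously true for a non-proper $X_\Sigma$, so it should not be taken for granted.

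In short: the reverse inclusion sketch and the reduction via Proposition~\ref{prp:localization} are sound, but the crux — that the quotient $\wSh_\Ls(T^n)/\cB_\rho$ is literally $\wSh_{\Lambda_{\Sigma^\rho_c}}(T^n)$ with the induced functor equal to $\kappa_{\Sigma^\rho_c}$ on the nose — requires the $\beta$-functor argument of Lemmas~\ref{lem:conv}--\ref{divisorequivalence}, and the forward inclusion $\cN\subset\cB_\rho$ should be proved by orthogonality and generation rather than by explicit skyscraper resolutions of $P_x$.
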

A proof of this theorem is given at the end of this section. Throughout this section, we assume Conjecture~\ref{conj:qccc} for $\Sigma$.

Let $\cN$ be a dg subcategory $\kappa_\Sigma (\Coh_{D_\rho} X_\Sigma) \subset \wSh_{\Lambda_\Sigma}(T^n)$.
Set $D:=D_\rho$ for simplicity. 
Define $F_D:=\kappa_\Sigma(\cO(D))$ and 
\begin{equation}
\beta(F):=
\colim_l F * (F_D)^{*l}
\in \lSh_{\Lambda_{\Sigma}}(T^n)
\end{equation}
for $F \in \wSh_{\Lambda_{\Sigma}}(T^n)$. Assuming Conjecture~\ref{conj:qccc}, by Proposition~\ref{prp:localization}, $\beta\circ \kappa_\Sigma=\kappa_{\Sigma}\circ j_*j^*$, where $j\colon X_{\Sigma^\rho_c}\hookrightarrow X_\Sigma$ is the inclusion.
In other words,
\begin{equation}
\beta(F)\simeq F*\kappa_\Sigma(\cO_{X_{\Sigma_c^\rho}}).
\end{equation}

\begin{lemma}\label{lem:conv}
	\begin{enumerate}
		\item $F * F' \in \cN$ if $F' \in \cN$.
		\item $\Cone((F_D)^{*l_1} \to (F_D)^{*l_2}) \in \cN\ (l_2 >l_1)$.
		\item $\beta(F) \simeq 0$ if $F \in \cN$.
	\end{enumerate}
\end{lemma}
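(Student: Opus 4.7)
The plan is to transport all three claims to statements in $\QCoh X_\Sigma$ through the quasi-equivalence $\kappa_\Sigma \colon \QCoh X_\Sigma \xrightarrow{\simeq} \lSh_\Ls(T^n)$ granted by Conjecture~\ref{conj:qccc}; each then becomes an elementary statement about coherent sheaves on $X_\Sigma$ and the divisor $D = D_\rho$, verifiable using Propositions~\ref{prp:monoidality} and~\ref{prp:localization}. As a preliminary step, I would first extend monoidality from $\Coh$ to $\QCoh$: since $\kappa_\Sigma$ is a quasi-equivalence of cocomplete dg categories and both $\otimes$ on $\QCoh X_\Sigma$ and $*$ on $\lSh(T^n)$ preserve colimits in each variable, Proposition~\ref{prp:monoidality} implies $\kappa_\Sigma(\cE \otimes \cF) \simeq \kappa_\Sigma(\cE) * \kappa_\Sigma(\cF)$ for arbitrary $\cE, \cF \in \QCoh X_\Sigma$. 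This, together with the fact that $\kappa_\Sigma$ preserves colimits, is the only structural input needed below.

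For $(0)$, write $F = \kappa_\Sigma(\cF)$ and $F' = \kappa_\Sigma(\cF')$ with $\cF' \in \Coh_{D_\rho} X_\Sigma$ (and $\cF \in \Coh X_\Sigma$, since the compact objects match across $\kappa_\Sigma$). Then $F * F' \simeq \kappa_\Sigma(\cF \otimes \cF')$ by the extended monoidality, and $\cF \otimes \cF'$ is set-theoretically supported on $D_\rho$ because $\cF'$ is; thus $\cF\otimes \cF'\in \Coh_{D_\rho} X_\Sigma$ and hence $F * F' \in \cN$. For $(1)$, monoidality gives $(F_D)^{*l} \simeq \kappa_\Sigma(\cO(lD))$, and the transition morphism $(F_D)^{*l_1} \to (F_D)^{*l_2}$ is $\kappa_\Sigma$ applied to multiplication by $s^{l_2-l_1}$, where $s \in \Gamma(X_\Sigma; \cO(D))$ is the canonical section. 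The cokernel of the injection $s^{l_2-l_1}\colon \cO(l_1D) \hookrightarrow \cO(l_2 D)$ is the twisted structure sheaf of the $(l_2-l_1)$-th infinitesimal thickening of $D$, which lies in $\Coh_{D_\rho} X_\Sigma$; applying $\kappa_\Sigma$ we conclude the cone lies in $\cN$.

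For $(2)$, set $F = \kappa_\Sigma(\cF)$ with $\cF \in \Coh_{D_\rho} X_\Sigma$. Cocontinuity of $\kappa_\Sigma$ combined with the identification in $(1)$ yields
\[
\beta(F) = \colim_l F * (F_D)^{*l} \simeq \kappa_\Sigma\Bigl(\colim_l \cF \otimes \cO(lD)\Bigr) \simeq \kappa_\Sigma(j_* j^* \cF)
\]
by Proposition~\ref{prp:localization}(i), where $j \colon X_\Sigma \setminus D \hookrightarrow X_\Sigma$ is the open inclusion; since $\cF$ is supported on $D$, $j^* \cF \simeq 0$ and therefore $\beta(F)\simeq 0$. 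There is no serious obstacle in this lemma --- it is essentially a bookkeeping check that $\cN$ is the correct ``kernel'' of $\beta$ --- but the one point that demands care is the extension of monoidality to $\QCoh X_\Sigma$ mentioned at the outset, which is indispensable for treating an arbitrary $F$ in $(0)$ and for commuting $\kappa_\Sigma$ with the filtered colimit in $(2)$.
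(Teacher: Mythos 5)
Your argument is correct and matches the paper's proof, which is a one-line assertion citing exactly the ingredients you use: the definition of $\cN$, Conjecture~\ref{conj:qccc}, monoidality (Proposition~\ref{prp:monoidality}), and cocontinuity of $\kappa_\Sigma$. One small over-claim: the extension of monoidality to $\QCoh$ is not actually indispensable here, since in $(0)$ and $(1)$ every object in sight is coherent (the compact objects $F,F',F_D$ all come from $\Coh X_\Sigma$), and in $(2)$ the identification $\colim_l F*(F_D)^{*l}\simeq\kappa_\Sigma\bigl(\colim_l\cF\otimes\cO(lD)\bigr)$ already follows from applying the $\Coh$-level Proposition~\ref{prp:monoidality} termwise and then invoking cocontinuity of $\kappa_\Sigma$.
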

\begin{proof}
	These are clear from the definition of $\cN$, Conjecture~\ref{conj:qccc}, the monoidality (Proposition~\ref{prp:monoidality}) and the cocontinuity of $\kappa_\Sigma$.
\end{proof}

Consider the following commutative diagram of dg functors
\begin{equation}
\xymatrix{
	\cC \ar[r]^-{\beta} \ar[d] & \Ind(\cC)=\lSh_{\Lambda_{\Sigma}}(T^n) \\
	\cC/\cN \ar[r]_-{\beta_\cN} & \lSh_{\Lambda_{\Sigma_c^\rho}}(T^n), \ar@{^{(}->}[u]_-{i}
}
\end{equation}
where $\Ind$ is the ind-completion (cf.\ \cite{MR1827714, MR2182076}).
The bottom arrow is defined by using Lemma~\ref{lem:conv}(iii).
By the cocompleteness of $\lSh_{\Lambda_c^\rho}(T^n)$, $\beta_\cN$ extends to $\Ind(\cC/\cN)$:
\begin{equation}
\xymatrix@C=15pt{
	\cC/\cN \ar[rr]^-{\beta_\cN} \ar@{^{(}->}[rd]& & \lSh_{\Lambda_{\Sigma_c^\rho}}(T^n) \\
	& \Ind(\cC/\cN). \ar@{-->}[ru]_-{\Phi}
}
\end{equation}
There exists a canonical candidate for a quasi-inverse of $\Phi$. Let $\tl q \colon \Ind(\cC) \to \Ind(\cC/\cN)$ be the morphism induced by $\cC \to \cC/\cN$.
Define $\Psi$ to be the composite of $i \colon \lSh_{\Lambda_{\Sigma_c^\rho}}(T^n) \hookrightarrow \lSh_{\Lambda_{\Sigma}}(T^n) \simeq \Ind(\cC)$ and $\tl q$:
\begin{equation}
\Psi:=\tl q \circ i \colon \lSh_{\Lambda_{\Sigma_c^\rho}}(T^n) \hookrightarrow \lSh_{\Lambda_{\Sigma}}(T^n) \simeq \Ind(\cC) \lto \Ind(\cC/\cN).
\end{equation}
\begin{lemma}\label{wrappedequivalence}
	The functor $\Psi$ is a quasi-inverse to $\Phi$. Hence, $\Ind(\cC/\cN)\simeq \lSh_{\Lambda_c^\rho}(T^n)$. In particular, there is a quasi-equivalence
	\begin{equation}
	\beta_\cN \colon \wSh_{\Lambda_\Sigma}(T^n)/\cN \xrightarrow{\simeq} \wSh_{\Lambda_{\Sigma^\rho_c}}(T^n) \subset \lSh_{\Lambda_{\Sigma^\rho_c}}(T^n)
	\end{equation}
	by taking compact objects.
\end{lemma}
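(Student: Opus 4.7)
The plan is to verify that $\Phi$ and $\Psi$ are mutually quasi-inverse, after which the final statement about wrapped categories follows by restriction to compact objects. Both $\Phi$ and $\Psi$ are cocontinuous: $\Phi$ is the ind-extension of $\beta_\cN$, and $\Psi = \tilde q \circ i$ is a composition of cocontinuous functors. Hence it suffices to check each composite on objects of $\cC/\cN \subset \Ind(\cC/\cN)$ (respectively on filtered colimits of compact objects in $\lSh_{\Lambda_{\Sigma_c^\rho}}(T^n)$) and to extend by colimits.

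For $\Psi \circ \Phi \simeq \id_{\Ind(\cC/\cN)}$, let $\tilde F \in \cC$. Unwinding the definitions gives
\begin{equation}
\Psi\Phi\, q(\tilde F) \simeq \tilde q\bigl(i\, \beta_\cN q(\tilde F)\bigr) \simeq \tilde q(\beta \tilde F).
\end{equation}
Writing $\beta \tilde F = \colim_l \tilde F * F_D^{*l}$, the cofiber of the canonical map $\tilde F = \tilde F * F_D^{*0} \to \beta \tilde F$ is $\colim_l \tilde F * \Cone(F_D^{*0} \to F_D^{*l})$. Each term lies in $\cN$ by Lemma~\ref{lem:conv}, so the cofiber lies in $\Ind(\cN) \subset \ker \tilde q$. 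Hence $\tilde q(\tilde F) \simto \tilde q(\beta \tilde F)$, giving $\Psi\Phi\, q(\tilde F) \simeq q(\tilde F)$.

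For $\Phi \circ \Psi \simeq \id_{\lSh_{\Lambda_{\Sigma_c^\rho}}(T^n)}$, presenting $F \in \lSh_{\Lambda_{\Sigma_c^\rho}}(T^n)$ as a filtered colimit of compact objects in $\Ind(\cC)$ and using cocontinuity of $\beta$ shows $\Phi\Psi F \simeq \beta F$. The identity thus reduces to $\beta F \simeq F$ for every such $F$. By Proposition~\ref{prp:localization}~(i), Proposition~\ref{prp:monoidality} and the cocontinuity of $\kappa_\Sigma$, one has $\beta \circ \kappa_\Sigma \simeq \kappa_\Sigma \circ (j_* j^*)$; by Conjecture~\ref{conj:qccc} we may write $F = \kappa_\Sigma \cG$, and the claim becomes that the unit $\cG \to j_* j^* \cG$ is an isomorphism in $\QCoh X_\Sigma$. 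The fiber $\cK$ of this unit lies in the cocompletion of $\Coh_{D_\rho} X_\Sigma$, hence $\kappa_\Sigma \cK \in \Ind(\cN)$; simultaneously, since $F$ and $\beta F = \kappa_\Sigma(j_* j^* \cG)$ both have microsupport in $\Lambda_{\Sigma_c^\rho}$ (the latter by Proposition~\ref{naturality} extended cocontinuously), so does the fiber $\kappa_\Sigma \cK$. Thus it suffices to establish the orthogonality
\begin{equation}
\Ind(\cN) \cap \lSh_{\Lambda_{\Sigma_c^\rho}}(T^n) = 0
\end{equation}
inside $\lSh_{\Lambda_\Sigma}(T^n)$.

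The orthogonality above is where I expect the main difficulty to lie. My plan is to prove the sharper $\hom$-vanishing $\hom(N, G) \simeq 0$ for every $N \in \cN$ and $G \in \lSh_{\Lambda_{\Sigma_c^\rho}}(T^n)$: once this is known, any $K \in \Ind(\cN) \cap \lSh_{\Lambda_{\Sigma_c^\rho}}(T^n)$ would satisfy $\hom(N', K) \simeq 0$ for every compact $N' \in \cN$, and since $\cN$ compactly generates $\Ind(\cN)$, this forces $K \simeq 0$. For the vanishing itself I would appeal to Nadler's split-generation of $\wSh_{\Lambda_\Sigma}(T^n)$ by microlocal skyscrapers, together with an identification (up to split closure) of $\cN$ with the subcategory $\cB_\rho$ split-generated by $F_{p,f}$ for $p \in (\Lambda_\Sigma \setminus \Lambda_{\Sigma_c^\rho})_{\mathrm{reg}}$; the vanishing is then immediate from $\hom(F_{p,f}, G) \simeq \phi_{p,f}(G) \simeq 0$ whenever $p \notin \MS(G) \subset \Lambda_{\Sigma_c^\rho}$. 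The technical heart is the identification $\cN = \cB_\rho$ up to split closure, which requires a direct microlocal description of $\kappa_\Sigma(\Coh_{D_\rho} X_\Sigma)$ in terms of microlocal skyscrapers along the missing piece $\Lambda_\Sigma \setminus \Lambda_{\Sigma_c^\rho}$ of the Lagrangian.
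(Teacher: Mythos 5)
Your treatment of $\Psi\Phi\simeq\id$ and the reduction of $\Phi\Psi\simeq\id$ to the orthogonality $\Ind(\cN)\cap\lSh_{\Lambda_{\Sigma_c^\rho}}(T^n)=0$ are correct and parallel to the paper's argument (the paper runs the reduction through distinguished triangles and microsupport estimates rather than the $j_*j^*$ unit, but these are essentially the same step). The genuine gap is the orthogonality itself: you leave it entirely to the ``technical heart'' of identifying $\cN$ with $\cB_\rho$ up to split closure, but you give no argument for the inclusion $\cN\subset\cB_\rho$, and this is precisely the hard direction. In the paper the logical order is the reverse of what you propose: Lemma~\ref{lem:vanish} first establishes the orthogonality directly, and only then (in Lemma~\ref{divisorequivalence}) is $\cN=\cB_\rho$ deduced --- using the orthogonality. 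So your route, as stated, either requires a new and unprovided microlocal description of $\kappa_\Sigma(\Coh_{D_\rho}X_\Sigma)$ in terms of skyscrapers (you acknowledge this) or becomes circular.

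What your proposal is missing, concretely, is the argument of Lemma~\ref{lem:vanish}: for $\cF\in\Coh_{D_\rho}X_\Sigma$ and $H\in\lSh_{\Lambda_{\Sigma_c^\rho}}(T^n)$, one shows $\Hom(\kappa_\Sigma\cF,H)=0$ by reducing to stalks, passing to $\Hom(H,\kappa_\Sigma(\cO_x))$ for $x\in D_\rho$ via duality and coherence, invoking the explicit description $\kappa_\Sigma(\cO_x)\simeq\tl{p}_!(\cL\boxtimes\bC_{[0,1)^r})$ from Proposition~\ref{points}, and then applying the microlocal Morse lemma (Proposition~\ref{MML}) along the direction dual to $\rho$, using the microsupport estimate from \cite[Cor.~6.5]{KS} to show the relevant $\cHom$ sheaf is non-characteristic for the sweeping function $\psi$. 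This analytic step --- and not the formal bookkeeping with ind-completions, which you carried out correctly --- is the content of the lemma, and without it the proof is incomplete.
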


\begin{proof}
	Denote by $[F]$ the image of $F \in \cC$ in $\cC/\cN$.
	Any object of $\Ind(\cC/\cN)$ is of the form of $\indlim_i [F_i]  \ (F_i \in \cC)$, where $\indlim$ is a formal limit (cf.\ \cite{MR2182076}).
	By construction, the functors $\Phi$ and $\tl q$ are described as 
	\begin{align}
	\Phi \colon \indlim_{i} [F_i] & \longmapsto
	\colim_i \colim_{l} F_i * (F_D)^{*l}, \\
	\tl q \colon \indlim_{i} F_i & \longmapsto
	\indlim_{i} [F_i].
	\end{align}
	Moreover, the equivalence $\Ind(\cC) \xrightarrow{\simeq} \lSh_{\Lambda_{\Sigma}}$ can be described as 
	\begin{align}
	\indlim_{i} F_i \longmapsto
	\colim_{i} F_i.
	\end{align}
	Hence, the functor $\Psi \circ \Phi \colon \Ind(\cC/\cN) \to \Ind(\cC/\cN)$ is  
	\begin{equation}
	\Psi \circ \Phi \colon 
	\indlim_{i} [F_i] \longmapsto 
	\indlim_{i} \indlim_{l} [ F_i * (F_D)^{*l}]. 
	\end{equation}
	By Lemma~\ref{lem:conv}(i) and (ii), $F_i \to F_i * (F_D)^{*l}$ is an isomorphism in $\cC/\cN$. 
	Therefore the image of $\indlim_{i} [F_i]$ under $\Psi \circ \Phi$ is isomorphic to itself and $\Psi \circ \Phi \simeq \id$.
	Next we consider $\Phi \circ \Psi$.
	Let $G \in \lSh_{\Lambda_{\Sigma_c^\rho}}$ and write $G \simeq \colim_i F_i \ (F_i \in \cC)$.
	By construction, 
	\begin{equation}
	\Phi \Psi(G) =\colim_i \beta(F_i) =
	\colim_i \colim_l F_i * (F_D)^{*l}.
	\end{equation}
	Consider a distinguished triangle 
	\begin{equation}
	F_i \lto F_i * (F_D)^{*l} \lto \Cone(F_i \to F_i * (F_D)^{*l}) 
	\overset{+1}{\lto}.
	\end{equation}
	By Lemma~\ref{lem:conv}(i) and (ii), $\Cone(F_i \to F_i * (F_D)^{*l}) \in \cN$.
	Taking colimits, we obtain a distinguished triangle
	\begin{equation}
	\colim_i F_i \lto 
	\colim_i \beta(F_i) \lto 
	\colim_i \colim_{l} \Cone(F_i \to F_i * (F_D)^{*l}) 
	\overset{+1}{\lto}.
	\end{equation}
	Here $\colim_i F_i \simeq G \in \lSh_{\Lambda_{\Sigma_c^\rho}}(T^n)$ and $\beta(F_i) \in \lSh_{\Lambda_{\Sigma_c^\rho}}(T^n)$ by Proposition~\ref{naturality}.
	By the distinguished triangle and the triangular inequality for microsupports, we get 
	\begin{equation}
	\colim_i \colim_{l} \Cone(F_i \to F_i * (F_D)^{*l}) \in \lSh_{\Lambda_{\Sigma_c^\rho}}(T^n).
	\end{equation}

	\begin{lemma}\label{lem:vanish}We have
		\begin{equation}
		^\perp\lSh_{\Lambda_{\Sigma^\rho_c}}(T^n)\cap \wSh_\Ls(T^n)=\cN.
		\end{equation}
		As a corollary,
		\begin{equation}
		\Hom_{\lSh_{\Lambda_{\Sigma}}}(\overline{\cN}^c,\lSh_{\Lambda_{\Sigma^\rho_c}})=0
		\end{equation}
		where $\overline{\cN}^c$ is the filtered cocomplete closure of $\cN$. 
	\end{lemma}
	
	\begin{proof}
		First, we show that $^\perp\lSh_{\Lambda_{\Sigma^\rho_c}}(T^n)\cap \wSh_\Ls(T^n)\supset\cN$. It is enough to show that
		\begin{equation}
		\Hom_{\Coh(X_\Sigma)}(\cF,\kappa_\Sigma^{-1}H)=0
		\end{equation}
		for any $\cF \in \Coh_{D_\rho} X_\Sigma$ and $H \in \lSh_{\Lambda_{\Sigma^\rho_c}}(T^n)$.
		Set $\cG:=\kappa_\Sigma^{-1}H$.
		Since $\cF$ is coherent, we have
		\begin{equation}
		\cHom(\cF,\cG)_x
		\simeq 
		\Hom_{\cO_x}(\cF_x,\cG_x),
		\end{equation}
		where $\cHom$ in the left-hand side denotes the internal Hom in the dg category of quasi-coherent sheaves. 
		For $x \not\in D_\rho$, $\cF_x \simeq 0$ and the right-hand side vanishes.
		It is enough to show that $\hom_{\cO_x}^j(\cG_x,\cO_x) \simeq 0$ for $x \in D_\rho$ since $\Coh(\{x\}) \simeq \langle \cO_x \rangle \simeq \mathbf{Vect}_\bC$.
		Denote by $i_x \colon \{x\} \hookrightarrow X$ the inclusion.
		Then we have the isomorphisms
		\begin{align*}
		\cHom_{\cO_x}(\cG_x,\cO_x) 
		& \simeq 
		\Hom_{\cO_x}(i_x^*\cG,\cO_x) \\
		& \simeq 
		\Hom_{\cO_{X_\Sigma}}(\cG,\cO_x) \\
		& \simeq 
		\Hom_{\lSh}(H,\ks(\cO_x)) .
		\end{align*}
		Fix an integral basis of $N$ including the primitive vector of $\rho$. Then we have the dual integral basis of $M$, which has $v_\rho$ the dual to $\rho$.
		As we described in Proposition~\ref{points}, $\kappa(\cO_x)$ is of the form
		\begin{equation}
		\tl{p}_!(\cL \boxtimes \bC_{(0,1]^r}),
		\end{equation}
		where the second component of the exterior tensor is the direction $v_\rho$, the first part is spanned by the others, $\cL$ is a local system on $T^{n-r}$ and $\tl p \colon T^{n-r} \times \bR^r \to T^n$ is the quotient map. 
		Then we obtain an isomorphism
		\begin{equation}
		\Hom(H,\kappa(\cO_x)) 
		\simeq 
		\Hom(\tl{p}^{-1}H,\cL \boxtimes \bC_{(0,1]^r}).
		\end{equation}
		Here $\MS(\tl{p}^{-1}H) \subset (T^{n-r} \times \bR^r) \times \bigcup_{\sigma \in \Sigma_c^\rho} (-\sigma)$ and $\MS(\cL \boxtimes \bC_{(0,1]^r}) \subset (T^{n-r} \times \bR^r) \times \bigcup_{\rho \preceq \tau} (-\tau)$.
		Since a closed cone $\tau$ satisfying $\rho \preceq \tau$ does not contain $-\rho$, by Proposition~\ref{prp:SSHom} $\MS(\cHom(\tl{p}^{-1}H,\cL \boxtimes \bC_{(0,1]^r}))$ does not contain $\rho$.
		Thus we can apply the microlocal Morse lemma (Proposition~\ref{MML}) to the function $\psi(t,x):=\langle v'_\rho,x \rangle \ ((t,x) \in T^{n-r} \times \bR^r)$, where $v'_\rho$ is the vector in $\bR^r$ corresponding to $v_\rho$.
		Noticing that $\cHom(\tl{p}^{-1}H,\cL \boxtimes \bC_{(0,1]^r})$ has compact support, we obtain $\Hom(H,\kappa(\cO_x)) \simeq \Hom(\tl{p}^{-1}H,\cL \boxtimes \bC_{(0,1]^r})=0$. As a corollary, the second line in the statement follows.
		
		Conversely, for $E\in {}^\perp\lSh_{\Lambda_{\Sigma^\rho_c}}(T^n)\cap \wSh_\Ls(T^n)$, we want to see that $\ks^{-1}(E)_x\simeq 0$ for $x\in X_{\Sigma^\rho_c}$. This is implied by
		\begin{equation}
		\begin{split}
		0\simeq \Hom_{\lSh}(\ks(\cO_x), E)&\simeq \Hom_{X_\Sigma}(\cO_x, \ks^{-1}(E))\\
		&\simeq \Hom_{X_\Sigma}(\ks^{-1}(E), \cO_x)^\vee \\
		&\simeq \Hom_{\cO_x}(\ks^{-1}(E)_x, \cO_x)^\vee.
		\end{split} 
		\end{equation} 
	\end{proof}

	By Lemma~\ref{lem:vanish}, 
	\begin{align*}
	& \Hom \left( \colim_i \colim_{l} \Cone(F_i \to F_i * (F_D)^{*l}),H \right) \simeq 0
	\end{align*}
	for any $H \in \lSh_{\Lambda_{\Sigma_c^\rho}}(T^n)$.
	Therefore we have $\colim_i \colim_{l} \Cone(F_i \to F_i * (F_D)^{*l}) \simeq 0$ and an isomorphism
	\begin{equation}
	G \simto \colim_i \beta(F_i). 
	\end{equation}
	This shows $\Phi \circ \Psi \simeq \id$.
\end{proof}

\begin{lemma}\label{divisorequivalence}
	There is a quasi-equivalence
	\begin{equation}
	\Coh_{D_\rho}(X_\Sigma)\xrightarrow{\simeq} \cN\simeq \cB_\rho
	\end{equation}
	induced by the restriction of $\kappa_\Sigma$.
\end{lemma}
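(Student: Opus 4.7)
The first equivalence $\Coh_{D_\rho} X_\Sigma \xrightarrow{\simeq} \cN$ is essentially tautological. Conjecture~\ref{conj:qccc} yields, after passing to compact objects, a quasi-equivalence $\kappa_\Sigma \colon \Coh X_\Sigma \xrightarrow{\simeq} \wSh_\Ls(T^n)$, and $\cN$ is by definition the essential image under this equivalence of the full dg subcategory $\Coh_{D_\rho} X_\Sigma$. The substance of the lemma is the second quasi-equivalence $\cN \simeq \cB_\rho$, which I will establish by identifying $\cN$ and $\cB_\rho$ as the same thick subcategory of $\wSh_\Ls(T^n)$.

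For $\cB_\rho \subset \cN$, I use the characterization $\cN = {}^\perp\lSh_{\Lambda_{\Sigma^\rho_c}}(T^n) \cap \wSh_\Ls(T^n)$ from Lemma~\ref{lem:vanish}. It suffices to verify that each split-generating microlocal skyscraper $F_{p,f}$ with $p = (z_0, df(z_0)) \in (\Ls \setminus \Lambda_{\Sigma^\rho_c})_{\mathrm{reg}}$ is left orthogonal to $\lSh_{\Lambda_{\Sigma^\rho_c}}(T^n)$. For any such $H$,
\begin{equation}
\Hom(F_{p,f}, H) \simeq \phi_{p,f}(H) = \Gamma_{\{f \ge 0\}}(H)_{z_0} \simeq 0,
\end{equation}
the last vanishing coming directly from the definition of microsupport, since $p \notin \Lambda_{\Sigma^\rho_c} \supset \MS(H)$. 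Hence $F_{p,f} \in \cN$, and $\cB_\rho \subset \cN$ follows by thickness.

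For the reverse inclusion $\cN \subset \cB_\rho$, my strategy is to exhibit thick generators of $\Coh_{D_\rho} X_\Sigma$ whose $\kappa_\Sigma$-images visibly lie in $\cB_\rho$. The natural candidates are structure sheaves $\cO_{V(\tau)}$ of $T$-invariant orbit closures for cones $\tau \succeq \rho$, and structure sheaves $\cO_x$ of closed points $x \in D_\rho$, both of which sit inside $\Coh_{D_\rho} X_\Sigma$. Proposition~\ref{naturality} applied to the subfan attached to $\tau$, together with the explicit formula of Proposition~\ref{points} in the closed-point case, bounds the microsupport of $\kappa_\Sigma(\cO_{V(\tau)})$ (resp.\ $\kappa_\Sigma(\cO_x)$) by a union of strata $p((\tau')^\perp) \times (-\tau')$ with $\tau' \succeq \rho$, so its microstalks vanish at all regular points of $\Ls$ lying in $\Lambda_{\Sigma^\rho_c}$. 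Nadler's split-generation by microlocal skyscrapers then forces these generators into $\cB_\rho$.

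The main obstacle is the reduction-to-generators step: verifying that structure sheaves of $T$-invariant closed subschemes of $D_\rho$ thickly generate $\Coh_{D_\rho} X_\Sigma$ is subtle, as $D_\rho$ need not be proper and Cartier twists by $\cO(D_\rho)$ sit outside the obvious generating family. A cleaner reformulation is to package both inclusions as a semi-orthogonal decomposition
\begin{equation}
\wSh_\Ls(T^n) = \langle \cB_\rho,\, \wSh_{\Lambda_{\Sigma^\rho_c}}(T^n) \rangle,
\end{equation}
whose semi-orthogonality is exactly the microstalk-vanishing computation above, and whose spanning follows by partitioning Nadler's split-generating microlocal skyscrapers according to whether each base point lies in $\Ls \setminus \Lambda_{\Sigma^\rho_c}$ or in $\Lambda_{\Sigma^\rho_c}$ (skyscrapers of the latter type having microsupport locally in $\Lambda_{\Sigma^\rho_c}$ by the regularity assumption at the base point, which precludes a second stratum from passing through). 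Coupling this decomposition with Lemma~\ref{wrappedequivalence} identifies $\cB_\rho$ with the kernel of the induced localization $\wSh_\Ls(T^n) \to \wSh_{\Lambda_{\Sigma^\rho_c}}(T^n)$, which is precisely $\cN$, completing the proof.
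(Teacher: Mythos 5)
Your first half is on target: the identification $\Coh_{D_\rho}X_\Sigma \simeq \cN$ is indeed tautological once one invokes Conjecture~\ref{conj:qccc} on compact objects, and your proof that $\cB_\rho\subset\cN$ via the microstalk vanishing $\phi_{p,f}(H)\simeq 0$ for $p\notin\MS(H)$ is exactly the first step of the paper's argument. The problem is in your proposed semi-orthogonal decomposition, specifically the spanning statement. You assert that a microlocal skyscraper $F_{p,f}\in\wSh_\Ls(T^n)$ at a regular point $p\in\Lambda_{\Sigma^\rho_c}$ has $\MS(F_{p,f})\subset\Lambda_{\Sigma^\rho_c}$, justified only by the regularity of $\Ls$ at $p$. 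That regularity controls the microsupport of $F_{p,f}$ only in a neighborhood of $p$; the representing object is global (it is the image of the left adjoint $\phi^l_{p,f}\colon\Module(k)\to\lSh_\Ls(T^n)$, whose ambient category is $\lSh_\Ls$, not $\lSh_{\Lambda_{\Sigma^\rho_c}}$), and there is no reason for its microsupport to stay inside $\Lambda_{\Sigma^\rho_c}$ far from $p$. Indeed the skyscraper at $p$ computed in $\wSh_\Ls$ is a priori different from the one computed in $\wSh_{\Lambda_{\Sigma^\rho_c}}$; only the latter is guaranteed to land in $\lSh_{\Lambda_{\Sigma^\rho_c}}$. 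So the partition of Nadler's split-generators into the two classes does not by itself yield the claimed spanning, and the SOD is not established.

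The paper sidesteps this entirely by proving compact generation of $\overline{\cN}^c$ directly, using Lemma~\ref{lem:vanish} twice rather than once. Concretely: one first checks (as you did) that each $F_{p,f}$ with $p\in(\Ls\setminus\Lambda_{\Sigma^\rho_c})_{\mathrm{reg}}$ lies in $\cN\subset\overline{\cN}^c$. Then suppose $F\in\overline{\cN}^c$ satisfies $\Hom(F_{p,f},F)=\phi_{p,f}(F)=0$ for all such $p$ and $f$; by the very definition of microsupport this forces $\MS(F)\subset\Lambda_{\Sigma^\rho_c}$, i.e.\ $F\in\lSh_{\Lambda_{\Sigma^\rho_c}}(T^n)$. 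But by Lemma~\ref{lem:vanish} one has $\Hom(\overline{\cN}^c,\lSh_{\Lambda_{\Sigma^\rho_c}}(T^n))=0$, and applying this to $\Hom(F,F)$ gives $F\simeq 0$. Hence $\{F_{p,f}\}_{p\in(\Ls\setminus\Lambda_{\Sigma^\rho_c})_{\mathrm{reg}}}$ compactly generates $\overline{\cN}^c$, and since $\cN$ is the pretriangulated subcategory of compact objects, Neeman's theorem (\cite[Lemma~4.4.5]{Neeman}) says these skyscrapers split-generate $\cN$, i.e.\ $\cN\subset\cB_\rho$. Note that this argument never needs any information about skyscrapers based at points of $\Lambda_{\Sigma^\rho_c}$; the self-orthogonality of $\overline{\cN}^c$ against $\lSh_{\Lambda_{\Sigma^\rho_c}}$ does the work instead. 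If you want to rescue the SOD framing, you would first need to prove that the skyscrapers at $\Lambda_{\Sigma^\rho_c}$-points actually lie in $\wSh_{\Lambda_{\Sigma^\rho_c}}(T^n)$, which is essentially equivalent to the statement you are trying to prove and is best obtained as a consequence of Lemmas~\ref{wrappedequivalence} and~\ref{divisorequivalence} rather than as an input.
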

\begin{proof}
	For a microlocal skyscraper sheaf $F_{p,f}\in \wSh_\Ls(T^n)$ at $p\in (\Ls\backslash \Lambda_{\Sigma_c^\rho})_\mathrm{reg}$, this is contained in $\cN$ by Lemma~\ref{lem:vanish}. 
	Suppose that $F\in \overline{N_\rho}^c$ satisfies $\Hom(F_{p,f},F)=0$ for any $p\in (\Ls\backslash \Lambda_{\Sigma^\rho_c})_\mathrm{reg}$ and $f$. 
	Then $\MS(F)\subset \Lambda_{\Sigma_c^\rho}$, i.e., $F\in \lSh_{\Lambda_{\Sigma_c^\rho}}(T^n)$. 
	By Lemma~\ref{lem:vanish}, we conclude that $F=0$. Hence, $\lc F_{p,f}\relmid p\in (\Ls\backslash \Lambda_{\Sigma^\rho_c})_\mathrm{reg}\rc$ compactly generates $\overline{\cN}^c$. 
	Since $\cN$ is a pre-triangulated dg category, $\lc F_{p,f}\relmid p\in (\Ls\backslash \Lambda_{\Sigma^\rho_c})_\mathrm{reg}\rc$ split-generates $\cN$ (\cite[Lemma~4.4.5]{Neeman}).
\end{proof}

\begin{proof}[Proof of Theorem~\ref{main}]
	The desired equivalence and the diagram simultaneously follow from Proposition~\ref{prp:localization} and Lemmas~\ref{wrappedequivalence} and \ref{divisorequivalence}.
\end{proof}

\appendix
\section{Wrapped constructible sheaves are constructible sheaves for mirrors to complete toric surfaces}\label{sec:surface}
In this appendix, we prove that wrapped constructible sheaves coincide with constructible sheaves for $\dim \Sigma=2$ without assuming Conjecture~\ref{conj:qccc}. We consider that such a proof is useful for further developments of the theory of wrapped constructible sheaves and have decided to place it here.

\begin{theorem}\label{compactness}
	Assume $\Sigma$ is a smooth complete fan and $\dim \Sigma=2$. Then the wrapped constructible sheaves along $\Ls$ are constructible.
\end{theorem}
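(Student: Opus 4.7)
The reverse inclusion $\cSh_\Ls(T^n) \subseteq \wSh_\Ls(T^n)$ is Proposition~\ref{constiswrapped}, so what remains is to show that every compact object of $\lSh_\Ls(T^n)$ is $\bR$-constructible. The plan is to establish a quasi-equivalence
\begin{equation}
\Phi \colon \Ind\, \cSh_\Ls(T^n) \xrightarrow{\simeq} \lSh_\Ls(T^n),
\end{equation}
defined as the colimit-preserving extension of the inclusion. Given this, compact objects on the right transfer to compact objects of the Ind-category, which are retracts of objects of $\cSh_\Ls(T^n)$ and hence constructible themselves, since $\cSh_\Ls(T^n)$ is closed under retracts inside $\lSh_\Ls(T^n)$.

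Full faithfulness of $\Phi$ is essentially immediate from Proposition~\ref{constiswrapped}: constructible sheaves are compact in the target, so Homs out of $\cSh_\Ls(T^n)$ commute with filtered colimits, matching the standard Ind-Hom formula. The content lies in essential surjectivity: every $F \in \lSh_\Ls(T^2)$ should be a filtered colimit of constructible sheaves with microsupport in $\Ls$.

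For essential surjectivity I would use the finite cellular structure available in dimension $2$. Since $\Sigma$ is complete and $\dim\Sigma=2$, the finitely many rays of $\Sigma$ project to finitely many circles $p(\rho^\perp)$ through $0 \in T^2$, cutting $T^2$ into a finite Whitney stratification $\cS$ whose $2$-cells are open sectors, whose $1$-cells are open arcs, and whose unique $0$-cell is $\{0\}$; one has $\Ls \subset \Lambda_\cS := \bigsqcup_{S \in \cS} T^*_S T^2$. As used already in the proof of Proposition~\ref{constiswrapped}, $\lSh_{\Lambda_\cS}(T^2)$ is equivalent to the dg category of modules over a finite quiver with relations, with $\cSh_{\Lambda_\cS}(T^2)$ the perfect modules; in particular every object of $\lSh_{\Lambda_\cS}(T^2)$ is a filtered colimit of constructible subobjects. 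Combining this with Theorem~\ref{thm:kuwagaki}, which identifies $\cSh_\Ls(T^2)\simeq\Coh X_\Sigma$, and the fact that for the smooth complete toric surface $X_\Sigma$ one has $\Ind \Coh X_\Sigma \simeq \QCoh X_\Sigma$, should yield the desired equivalence $\lSh_\Ls(T^2)\simeq \Ind \cSh_\Ls(T^2)$.

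The main obstacle is bridging across the inclusion $\lSh_\Ls(T^2) \hookrightarrow \lSh_{\Lambda_\cS}(T^2)$: one must show that the microlocal condition $\MS(F) \subset \Ls$ is compatible with filtered-colimit approximation by objects of $\cSh_\Ls(T^2)$, or equivalently, that the left adjoint to this inclusion preserves constructibility. This can be handled by a direct quiver-theoretic analysis of the extra relations imposed by $\Ls \subset \Lambda_\cS$ and their compatibility with finite-dimensional approximation; alternatively, one can invoke Theorem~\ref{thm:kuwagaki} and the explicit test objects $\kappa_\Sigma(\cO_x)$ from Proposition~\ref{points} to verify compact generation of $\lSh_\Ls(T^2)$ by $\cSh_\Ls(T^2)$ directly, by checking that the collection $\{\kappa_\Sigma(\cO_x)\}_{x \in X_\Sigma}$ detects nonvanishing in $\lSh_\Ls(T^2)$.
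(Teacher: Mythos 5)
Your reduction is sound in spirit: combined with Proposition~\ref{constiswrapped}, the statement is indeed equivalent to showing that $\lSh_\Ls(T^2)$ is compactly generated by $\cSh_\Ls(T^2)$, since $\cSh_\Ls(T^2)$ is closed under cones and retracts inside $\lSh_\Ls(T^2)$. You also correctly pinpoint where the difficulty lies. The problem is that you then stop exactly there: neither of your two proposed resolutions is carried out, and neither is obviously routine. The ``combining with Theorem~\ref{thm:kuwagaki} and $\Ind\Coh X_\Sigma\simeq\QCoh X_\Sigma$'' step only rewrites $\Ind\cSh_\Ls(T^2)$ in algebro-geometric notation; it says nothing about the sheaf side and hence does not narrow the gap. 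The ``direct quiver-theoretic analysis'' is not sketched at all, and the issue it would need to address --- whether the left adjoint to $\lSh_\Ls\hookrightarrow\lSh_{\Lambda_\cS}$ preserves finiteness --- is precisely what is nontrivial. Finally, your alternative of checking that $\{\kappa_\Sigma(\cO_x)\}_{x\in X_\Sigma}$ detects nonvanishing in $\lSh_\Ls(T^2)$ would suffice, but it is a genuine claim that requires a real argument (and it is a different claim than the known fact that skyscrapers detect nonvanishing on the coherent side, since here one has no inverse $\kappa_\Sigma^{-1}$ available without already assuming Conjecture~\ref{conj:qccc}).

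The paper's actual proof takes a different, more concrete route which avoids any appeal to compact generation. It shows directly that a compact object $F\in\lSh_\Ls(T^2)$ has perfect stalks, which for a sheaf with microsupport in a fixed finite conic Lagrangian implies constructibility. The mechanism has two ingredients. First, a duality--evaluation argument (Lemma~\ref{okcase}): for any closed polyhedron $C$ with $\MS(p_!\bC_C)\subset\Ls$, the evaluation map $e_C\in\Hom(F,\Hom(F,p_!\bC_C)^\vee\otimes p_!\bC_C)$ must factor through a finite-dimensional stage of $\Hom(F,p_!\bC_C)^\vee=\colim_V V$ because $F$ is compact, and $\langle e_C,\varphi\rangle=\varphi$ then forces $\Hom(F,p_!\bC_C)$ to be perfect. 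This extracts finiteness from compactness without any generation statement. Second, Proposition~\ref{keyprop} produces, for each $x$, a constructible test object $\mC_x\in\cSh_\Ls(T^2)$ with $\hom(\mC_x,G)\simeq\Gamma(\Int(C_x),G)$ for $G$ microsupported in $\tLs$; here the polygon $C_x$ may have non-$\Ls$ vertices, and a two-dimensional surgery (cutting off a simplex $D_1$ at each bad vertex, with an induction on the ``length'' $l(v)$ that strictly decreases) repairs them. Passing through the non-characteristic deformation lemma then identifies $\hom(\mC_x,G)$ with the stalk $(\bD F)_x$, completing the proof. In short: where you would need to prove a compact generation statement, the paper instead proves pointwise finite-dimensionality by a self-contained duality trick plus an explicit polyhedral construction. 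Your write-up, as it stands, leaves the essential step unproved.
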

This theorem is concluded after the propositions in this section and Proposition~\ref{constiswrapped}. We also place a proof of Conjecture~\ref{conj:gccc} for $\dim\Sigma=2$ as a corollary at the end of this section.
\smallskip

Let $\cS$ be the coarsest stratification of $T^2$ such that $\Ls\subset \Lambda_\cS:= \bigcup_{S\in \cS}T^*_ST^2$. 
Take $F\in \lSh_\Ls(T^2)$. Let $x\in M_\bR$ be a point over $[x]\in T^2$ and define $C_x\subset M_\bR$ by 
\begin{equation}
C_x:=\bigcap_{\substack{x\in H_{\rho\geq c}\\ \rho\in \Sigma(1), c\in\bZ}}H_{\rho\geq c},
\end{equation} 
where
\begin{equation}
H_{\rho\geq c}:=\lc m\in M_\bR\relmid \la m, \rho\ra \geq c\rc.
\end{equation}
We say that $C_x$ is the closure of $x$ with respect to $\Ls$.

\begin{proposition}
	If $\Hom(F,p_!\bC_{C_x})$ is perfect, $F_{[x]}$ is also perfect.
\end{proposition}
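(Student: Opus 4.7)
The plan is to establish a quasi-isomorphism $\Hom(F, p_!\bC_{C_x}) \simeq (D\widetilde{F})_x[-n]$, where $\widetilde{F} := p^{-1}F$ and $D$ denotes Verdier duality on $M_\bR$, via a microlocal Morse argument that contracts $C_x^\circ$ to the point $x$. Since $\widetilde{F}$ is constructible along a finite analytic stratification near $x$ (induced by $\widetilde{\Lambda_\Sigma}$), Verdier duality preserves perfectness of stalks and the conclusion will then follow.

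First, since $\Sigma$ is complete, $C_x$ is compact, so $p_!\bC_{C_x} \simeq p_*\bC_{C_x}$ and the $(p^{-1}, p_*)$-adjunction gives $\Hom(F, p_!\bC_{C_x}) \simeq \Hom_{M_\bR}(\widetilde F, \bC_{C_x})$. A direct computation shows $Rj_*\bC_{C_x^\circ} \simeq \bC_{C_x}$ for the open inclusion $j : C_x^\circ \hookrightarrow M_\bR$ (the link of each boundary point of $C_x^\circ$ is contractible by convexity), yielding the Verdier identity $\bC_{C_x} \simeq D\bC_{C_x^\circ}[-n]$. Combining this with $R\cHom(\widetilde F, DG) \simeq D(\widetilde F \otimes G)$ and the Verdier-duality formula $R\Gamma(U; DK) \simeq R\Gamma_c(U; K)^\vee$ for open $U$, we obtain
\begin{equation*}
\Hom_{M_\bR}(\widetilde F, \bC_{C_x}) \simeq R\Gamma(C_x^\circ; D\widetilde F)[-n].
\end{equation*}
Consider the family of compact convex polytopes $C_{x,t} := \bigcap_{\rho \in \Sigma(1)} \{y \in M_\bR \mid \langle y, \rho \rangle \ge (1-t)\langle x, \rho \rangle + t\lfloor \langle x, \rho \rangle \rfloor\}$ for $t \in [0,1]$; by completeness of $\Sigma$, $C_{x,0} = \{x\}$ and $C_{x,1} = C_x$. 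Choose a real analytic gauge function $\psi : M_\bR \to \bR$ with $\psi(x) = 0$ whose level sets approximate the $\partial C_{x,t}$ so that, at each smooth point of the $\rho$-facet of $\{\psi = t\}$, $d\psi$ is a positive multiple of $-\rho$. Meanwhile, $\MS(D\widetilde F) = \MS(\widetilde F)^a \subset -\widetilde{\Lambda_\Sigma} = \bigcup_\sigma (\sigma^\perp + M)\times \sigma$, whose $\rho$-direction component at such a point lies in $\bR_{\ge 0}\rho$; hence $d\psi \notin \MS(D\widetilde F)$ along the sweep.

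Applying the microlocal Morse lemma (Proposition~\ref{MML}) to $\psi$ and $D\widetilde F$ then yields
\begin{equation*}
R\Gamma(C_x^\circ; D\widetilde F) = R\Gamma(\{\psi < 1\}; D\widetilde F) \simeq R\Gamma(\{\psi < \epsilon\}; D\widetilde F)
\end{equation*}
for every $0 < \epsilon < 1$; passing to the limit $\epsilon \to 0^+$ identifies this with the stalk $(D\widetilde F)_x$. Combining, $\Hom(F, p_!\bC_{C_x}) \simeq (D\widetilde F)_x[-n]$, and since Verdier duality preserves perfectness of stalks on a finite stratification, the left-hand side is perfect if and only if $F_{[x]}$ is. The main technical obstacle is verifying the non-characteristic estimate uniformly in $t$ when $x \in \partial C_x$ (i.e., when $\langle x, \rho \rangle \in \bZ$ for some ray $\rho$): the natural gauge function is then only piecewise linear at $x$ and must be analytically smoothed while preserving $d\psi \in \bR_{< 0}\rho$ at the $\rho$-facets, which can be arranged by an openness argument in the cotangent fibers.
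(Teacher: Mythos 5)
Your proposal follows the same overall strategy as the paper's proof: convert $\Hom(F,p_!\bC_{C_x})$ by adjunction and Verdier duality into sections of $\bD\widetilde F$ over $\Int(C_x)$, contract $\Int(C_x)$ onto $\{x\}$ through a shrinking family of polytopes, and identify the result with the stalk $(\bD\widetilde F)_x$. In fact, despite the different description, your family $C_{x,t}=\bigcap_\rho\{\langle\cdot,\rho\rangle\ge(1-t)\langle x,\rho\rangle+t\lfloor\langle x,\rho\rangle\rfloor\}$ has interior exactly equal to the paper's linear contraction $\mathfrak{C}_t=x+t(\Int(C_x)-x)$, so the deformation is literally the same.

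The genuine gap lies in the choice of tool for the contraction step. You invoke the microlocal Morse lemma (Proposition~\ref{MML}), which as stated requires a real analytic function $\psi$. But the natural gauge for the family $C_{x,t}$ has polyhedral level sets, so it is only piecewise linear — and not even that at $x$ when $x\in\partial C_x$, where some defining facet is stationary throughout the deformation. Your proposed remedy (``analytically smoothed while preserving $d\psi\in\bR_{<0}\rho$ at the $\rho$-facets, by an openness argument'') is not spelled out and is exactly where the difficulty concentrates: near the corners of $\partial C_{x,t}$ the differential of any smoothing interpolates between the conormals of adjacent facets, and one must check that these interpolated covectors avoid $\MS(\bD\widetilde F)$ for all $t$. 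Until that is verified the argument is incomplete. The paper avoids this entirely by applying the non-characteristic deformation lemma (Proposition~\ref{nonchara}) directly to the family of open sets $\{\mathfrak{C}_t\}$, for which no gauge function is needed; the non-characteristic condition is then a clean combinatorial statement (conormals of $\partial\mathfrak{C}_t$ meeting $-\widetilde\Lambda_\Sigma$ would force a facet of $\mathfrak{C}_t$ to lie on a lattice wall $\rho^\perp+M$, contradicting the minimality of $C_x$). If you simply replace your microlocal-Morse step with a direct application of the non-characteristic deformation lemma, the remaining steps of your proof (the Verdier identity $\bC_{C_x}\simeq \bD(j_!\bC_{C_x^\circ})[-n]$, the identification with $R\Gamma(C_x^\circ;\bD\widetilde F)[-n]$, and the duality-preserves-perfectness-of-stalks conclusion) go through and match the paper's.
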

\begin{proof}
	We denote the inclusion by $\iota_{C_x}\colon C_x\hookrightarrow M_\bR$.
	Note that the finite-dimensionality of $\Hom(F, p_*\iota_{C_x*}\bC_{C_x})$ is equivalent to that of $\Hom(\bC_{\Int(C_x)},\bD\iota_{C_x}^{-1}p^{-1}F)=\Gamma(\Int(C_x), \bD \iota_{C_x}^{-1}p^{-1}F)$. We define a continuous family of polyhedral open sets $\{\frakC_t\}_{t\in (0,1)}$ in $\Int(C_x)$ by
	\begin{equation}
	\frakC_t:=x+t(\Int(C_x)-x)
	\end{equation} 
	where the multiplication and the sum are taken with respect to the structure of the vector space of $M_\bR$. This family satisfies
	\begin{enumerate}
		\item $\frakC_t$ is congruent to $\Int(C_x)$ for any $t\in (0,1)$;
		\item $\bigcup_{t\in (0,1)}\frakC_t=\Int(C_x)$;
		\item $\{x\}=\bigcap_{t\in (0,1)}\frakC_t$, and 
		\item the conditions (i) and (ii) of Proposition~\ref{nonchara}. 
	\end{enumerate} 
	If the conormal cone of a face of $\frakC_t$ intersects $-\widetilde{\Ls}$ on $y\in M_\bR$, 
	then $y\in \rho^\perp$ for some $\rho\in \Sigma(1)$. 
	This contradicts the minimality of $C_x$. 
	Hence, Proposition~\ref{nonchara}(iii) is also satisfied and we have
	\begin{equation}
	\Gamma(\Int(C), \bD \iota_{C_x}^{-1}p^{-1}F) \simeq \Gamma(\frakC_t, \bD\iota_{C_x}^{-1}p^{-1}F)
	\end{equation} 
	for any $t\in (0,1)$.
	By the constructibility of $\bD(F)$, for a sufficiently small $t$, we have 
	\begin{equation}
	\Gamma(\frakC_t, \bD\iota_{C_x}^{-1}p^{-1}F)\simeq (\bD\iota_{C_x}^{-1}p^{-1}F)_x\simeq (\bD F)_x.
	\end{equation} 
	Hence, we can conclude that $\Hom(\bC_{\Int(C_x)},\bD\iota_{C_x}^{-1}p^{-1}F)$ is isomorphic to $(\bD F)_x$, hence $F_x$ is finite-dimensional.
\end{proof}

Hence, it suffices to show the following to prove Theorem~\ref{compactness}.
\begin{proposition}\label{keyprop}
	Assume that $F$ is a compact object of $\lSh_\Ls(T^2)$. Let $x\in M_\bR$ be a point and $C_x$ be the closure of $x$ with respect to $\Ls$. If $\Sigma$ is complete, $\Hom(F,p_!\bC_x)$ is perfect. 
\end{proposition}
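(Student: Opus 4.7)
The plan is to combine split-generation of $\wSh_\Ls(T^2)$ by microlocal skyscrapers with the $\bR$-constructibility of $p_!\bC_{C_x}$ on the compact torus $T^2$, both enabled by the completeness of $\Sigma$ in dimension $2$.

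First, completeness of $\Sigma$ together with $\dim N_\bR = 2$ forces the rays $\rho \in \Sigma(1)$ to positively span $N_\bR$, so the polygon $C_x = \bigcap_{\rho} H_{\rho \ge c_\rho}$ with $c_\rho = \lfloor\langle x,\rho\rangle\rfloor \in \bZ$ is bounded. Consequently $p|_{C_x}$ is finite and $p_!\bC_{C_x}$ is an $\bR$-constructible sheaf on $T^2$.

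Next, by Nadler's lemma recalled in Section~2.2, the compact object $F \in \wSh_\Ls(T^2)$ is split-generated by microlocal skyscrapers $F_{p,f}$ with $p \in (\Ls)_{\mathrm{reg}}$. Since the contravariant functor $\Hom(-,p_!\bC_{C_x})$ converts finite colimits to finite limits and preserves retracts, it suffices to show that each $\Hom(F_{p,f},p_!\bC_{C_x})$ is a perfect complex of $\bC$-vector spaces.

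For this final step I would rerun the argument of the preceding proposition. Namely, the adjunction identity
\[
\Hom(F_{p,f},p_!\bC_{C_x}) \simeq \Gamma\bigl(\Int(C_x);\bD\iota_{C_x}^{-1}p^{-1}F_{p,f}\bigr)
\]
is available, and the non-characteristic deformation lemma (Proposition~\ref{nonchara}) applied to the family $\{\frakC_t\}$ shrinking $\Int(C_x)$ to $\{x\}$ yields the stalk $(\bD F_{p,f})_x$. The crucial non-characteristic hypothesis is automatic here: the argument of the preceding proposition only invokes $\MS(F)\subset\Ls$ together with the minimality condition defining $C_x$, and both continue to hold when $F$ is replaced by a microlocal skyscraper $F_{p,f}$. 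Nadler's microlocal skyscrapers are $\bR$-constructible by their explicit construction, so this stalk is finite-dimensional. The main technical input is thus the constructibility of $F_{p,f}$ from Nadler's side, without which the identification of $\Gamma(\frakC_t;\bD\iota_{C_x}^{-1}p^{-1}F_{p,f})$ with the stalk at $x$ (for sufficiently small $t$) could fail; once that is in hand, the rest of the proof is purely formal.
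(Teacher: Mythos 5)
Your reduction to microlocal skyscrapers via Nadler's split-generation lemma is formally valid (thick subcategories are closed under the operations that $\Hom(-,p_!\bC_{C_x})$ converts to limits), and the adjunction and non-characteristic deformation steps parallel the paper's treatment of the preceding, unlabelled proposition. The problem is the final assertion that ``Nadler's microlocal skyscrapers are $\bR$-constructible by their explicit construction.'' In the paper, $F_{p,f}:=\phi^l_{p,f}(k)$ is defined \emph{abstractly} as the value of a left adjoint to the microlocal stalk functor; the only property extracted from that definition is that $F_{p,f}$ is a \emph{compact} object of $\lSh_\Lambda(Z)$. Compactness does not give $\bR$-constructibility, and Proposition~\ref{constiswrapped} gives an inclusion only in the direction constructible $\Rightarrow$ compact. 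Whether the converse holds for $\Lambda=\Ls$ is precisely the content of Theorem~\ref{compactness}, and the microlocal skyscrapers are themselves among the compact objects whose constructibility is at stake. So the crucial step of your argument assumes the theorem you are trying to establish.

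In fact, if constructibility of $F_{p,f}$ were available, the whole of Theorem~\ref{compactness} would follow in one line: split-generation would force every compact object into $\cSh_\Ls(T^2)$, and Proposition~\ref{keyprop} would become superfluous. The substantial work the paper does --- Lemma~\ref{okcase}, where perfectness of $\Hom(F,p_!\bC_C)$ is obtained for a \emph{general} compact $F$ by a trace-type argument through the double dual, and the combinatorial construction of $\mC_x\in\cSh_\Ls(T^2)$ by inductively slicing off non-$\Ls$ vertices of $C_x$ --- is exactly designed to avoid ever invoking constructibility of the wrapped object $F$. The microlocal geometry that makes this possible (that a bounded $\Ls$-polyhedron can be resolved into constructible probes supported along $\Ls$, using completeness, smoothness, and $\dim\Sigma=2$) is where the real content lies, and it is absent from your proposal. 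To repair the proof you would either have to (a) supply an independent proof that $F_{p,f}$ is $\bR$-constructible for this $\Lambda$, which would require its own microlocal or combinatorial argument, or (b) replace the skyscraper reduction by a probe on the test-sheaf side, as the paper does.

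A secondary remark: you invoke ``the argument of the preceding proposition'' to identify $\Gamma(\frakC_t;\bD\iota_{C_x}^{-1}p^{-1}F_{p,f})$ with a stalk for small $t$, attributing the need for constructibility to this stabilization. That stabilization in fact only needs weak (quasi-) constructibility of $\bD F_{p,f}$, which holds automatically; what genuinely requires $\bR$-constructibility is the finite-dimensionality of the resulting stalk $(\bD F_{p,f})_x$. So the location of the gap is slightly misdiagnosed in your write-up, but the gap is there either way.
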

First, we will prove the following lemma.
\begin{lemma}\label{okcase}
	Assume that $F$ is a compact object of $\lSh_\Ls(T^2)$. Let $C$ be a closed subset of $M_\bR$ which satisfies $\MS(p_!\bC_C)\subset \Ls$. Then $\Hom(F,p_!\bC_C)$ is perfect.
\end{lemma}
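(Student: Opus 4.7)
The plan is to show that, under the hypotheses, $p_!\bC_C$ is itself $\bR$-constructible, i.e., already belongs to $\cSh_\Ls(T^2)$. Once this reduction is achieved, the conclusion follows in two lines: by Proposition~\ref{constiswrapped}, $p_!\bC_C$ is then a compact object of $\lSh_\Ls(T^2)$; and, via the presentation of $\lSh_\Ls(T^2)$ as the dg category of modules over a finite-dimensional quiver path algebra $\Pi_\cS$ (already used in the proof of Proposition~\ref{constiswrapped}), compact objects correspond to perfect $\Pi_\cS$-modules, whose mutual Hom complexes are bounded with finite-dimensional components---hence perfect.

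For the main step, I would fix a stratification $\cS$ of $T^2$ compatible with $\Ls$, so that $\Ls \subset \Lambda_\cS$. The hypothesis $\MS(p_!\bC_C) \subset \Ls \subset \Lambda_\cS$ forces $p_!\bC_C$ to be locally constant on every stratum $S \in \cS$; thus constructibility reduces to finite-dimensionality of the stalks. For $[x_0] \in S$ with a chosen lift $x_0 \in M_\bR$, the stalk reads $(p_!\bC_C)_{[x_0]} = \bigoplus_{m \in M,\, x_0+m \in C}\bC$, so the question becomes whether only finitely many lattice translates of $x_0$ lie in $C$. In the setting where this lemma is invoked (the proof of Proposition~\ref{keyprop}, with $C = C_x$), $C$ is a bounded polytope: it is the intersection of finitely many closed half-spaces $H_{\rho\geq c}$ whose normals $\rho$ range over $\Sigma(1)$, and by completeness of $\Sigma$ these normals span $N_\bR$, which forces boundedness. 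Boundedness of $C$ then immediately supplies the required finiteness, so $p_!\bC_C \in \cSh_\Ls(T^2)$ as desired.

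The key obstacle is this finiteness of lattice translates; once secured, the lemma follows formally. To run the argument for arbitrary closed $C$ satisfying the microsupport hypothesis (without tacit boundedness), one would instead write $p_!\bC_C$ as a filtered colimit $\colim_n p_!\bC_{C \cap V_n}$ over an exhausting family of open polyhedral neighborhoods $V_n \subset M_\bR$ whose boundaries have conormals in $-\widetilde{\Ls}$, commute $\Hom(F,-)$ past the colimit using compactness of $F$, and then argue via microlocal Morse considerations that the resulting filtered system of finite-dimensional Hom spaces stabilizes for large $n$. This stabilization---controlling, in terms of the microlocal structure of $F$, the range of $n$ beyond which no new contribution appears---would be the genuinely delicate part of any such extension.
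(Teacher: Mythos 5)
Your argument is correct, but it takes a genuinely different route from the paper's. You reduce the lemma to showing that $p_!\bC_C$ itself lies in $\cSh_\Ls(T^2)$ (via boundedness of $C$ together with the microsupport hypothesis, which forces local constancy on strata), so that both $F$ and $p_!\bC_C$ are compact, and then deduce perfectness of $\Hom(F,p_!\bC_C)$ from the presentation of $\lSh_\Ls(T^2)$ as dg modules over a quiver path algebra. The paper instead runs a more self-contained, ``coevaluation'' argument: from boundedness of $C$ it builds an element $e_C \in \Hom(F, \Hom(F,p_!\bC_C)^\vee \otimes p_!\bC_C)$, writes $\Hom(F,p_!\bC_C)^\vee$ as the filtered colimit of its finite-dimensional subspaces, uses compactness of $F$ to pass $\Hom(F,-)$ through this colimit, and concludes that $e_C$ factors through a finite-dimensional stage, which forces $\Hom(F,p_!\bC_C)^\vee$ to be finite-dimensional. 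Your approach is conceptually cleaner in that it places $p_!\bC_C$ in $\cSh_\Ls(T^2)$ and then appeals to abstract nonsense, but the last step---that pairwise $\Hom$'s between compacts are perfect---tacitly requires the presenting dg category to be proper (equivalently, that the module attached to the constructible object $p_!\bC_C$ evaluates to finite-dimensional complexes on every representable). This is true here, but it is an extra input that the paper's proof neither states nor needs. Finally, note that both proofs use boundedness of $C$: the paper invokes it explicitly (``by the boundedness of $C$''), and it holds in the application because $\Sigma$ is complete; your speculative digression on unbounded $C$ is not part of what the lemma actually requires.
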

\begin{proof}
	For an open set $U \subset T^2$, the morphism
	\begin{equation}
	\Hom(F,p_!\bC_C)
	\lto 
	\Hom(F(U),(p_!\bC_C)(U))
	\end{equation}
	induces the dual morphism 
	\begin{equation}
	\Hom(F(U),(p_!\bC_C)(U))^\vee
	\lto 
	\Hom(F,p_!\bC_C)^\vee.
	\end{equation}
	Here by the boundedness of $C$, $(p_!\bC_C)(U)$ is finite-dimensional.
	Hence we obtain an isomorphism
	\begin{equation}
	\Hom(F(U),(p_!\bC_C)(U))^\vee
	\otimes (p_!\bC_C)(U)
	\simeq 
	\Hom(\Hom(F(U),(p_!\bC_C)(U)),(p_!\bC_C)(U)).
	\end{equation}
	Combining this with the evaluation map
	\begin{equation}
	F(U) \lto 
	\Hom(\Hom(F(U),(p_!\bC_C)(U)),(p_!\bC_C)(U)),
	\end{equation}
	we get a morphism
	\begin{equation}
	e_C(U) \colon F(U)
	\lto 
	\Hom(F,p_!\bC_C)^\vee
	\otimes 
	(p_!\bC_C)(U)
	\end{equation}
	and $e_C \in \Hom(F,\Hom(F,p_!\bC_C)^\vee \otimes p_!\bC_C)$.
	Here $\Hom(F,p_!\bC_C)^\vee \simeq \colim_V V$, where $V$ ranges through the family of finite-dimensional subspaces of $\Hom(F,p_!\bC_C)^\vee$.
	Since tensor products commute with colimits, one has an isomorphism
	\begin{equation}
	(\colim_V V) \otimes p_!\bC_C
	\simeq 
	\colim_V (V \otimes p_!\bC_C).
	\end{equation}
	Moreover by the compactness of $F$, we have
	\begin{equation}
	\begin{split}
	\Hom(F,\Hom(F,p_!\bC_C)^\vee \otimes p_!\bC_C)
	& \simeq 
	\Hom(F,\colim_V (V \otimes p_!\bC_C)) \\
	& \simeq 
	\colim_V \Hom(F,V \otimes p_!\bC_C).
	\end{split}
	\end{equation}
	Therefore there is a finite-dimensional subspace $V_0$ satisfying $e_S \in \Hom(F,V_0 \otimes p_!\bC_C)$.
	By construction, $\langle e_C,\varphi \rangle=\varphi$ for any $\varphi \in \Hom(F,p_!\bC_C)$, which implies that $\Hom(F,p_!\bC_C)^\vee$ is finite-dimensional.
\end{proof}

To prove Proposition~\ref{keyprop}, we give some explanation of constant sheaves on locally closed polyhedral sets.
\begin{definition}
	\begin{enumerate}
		\item A locally closed subset $Z$ of $M_\bR$ is a {\em locally closed rational polyhedral set} if  there exists $\rho_1,\dots, \rho_r\in N$ and $c_1,\dots,c_r\in \bZ$ for some $r$ such that 
		\begin{equation}
		Z=\bigcap_{i=1}^{i'} H_{\rho_i\geq c_i}\cap\bigcap_{i=i'+1}^rH_{-\rho_i> c_i} 
		\end{equation} 
		for some $i'$. 
		We sometimes omit ``rational" for short.
		\item  A locally closed polyhedral set $Z$ is {\em $\Ls$-polyhedral} if $\rho_i\in \Sigma(1)$ for any $i\leq i'$ and $-\rho_i\in \Sigma(1)$ for any $i>i'$ in the above expression. 
		\item A {\em face} of a locally closed polyhedral set is a face of the closure, which is a polyhedron.
		\item For a face $f$ of a locally closed polyhedral set $Z$, there exists the subset $R\subset \{1,\dots,r\}$ such that $f\subset \rho_i^\perp$ for $i\in R$. Then we refer to $\{\rho_i\}_{\in R}$ as \emph{defining rays} of $f$.
	\end{enumerate} 
\end{definition}

For $\dim \Sigma=2$, we define further notions. 
Let $Z\subset \bR^2$ be a locally closed polyhedral set and $v$ be a vertex of $Z$. 
Let $f_0$ and $f_1$ be facets of $Z$ containing $v$ and $\rho_0$ and $\rho_1$ be defining rays of $f_0$ and $f_1$, respectively.

We define the length of $v$ as
\begin{equation}
l(v):=\# \lc \rho\in \Sigma(1)\relmid \rho\subset \Int(\Cone(f_1,f_2))\rc.
\end{equation} 
Note that if $l(v)=0$ for any vertices of $Z$, then $Z$ is $\Ls$-polyhedral.

\begin{definition}
	For a $\Ls$-polyhedral set $Z$, a face $f$ of $Z$ is a \emph{$\Ls$-face} (or belongs to $\Ls$) if there exists a point $x$ in $\Int(f)$ where the microsupport at $x$ of $\bC_Z$ is in ${\Ls}$. Otherwise, we say $f$ is \emph{non-$\Ls$}.
\end{definition}

\begin{proof}[Proof of Proposition~\ref{keyprop}]
	We shall show that $\Gamma(\Int(C_x), \bD\iota_{C_x}^{-1}p^{-1}F)$ is perfect for any compact object $F\in \lSh_\Ls(T^2)$. 
	We will construct an object $\mC_x\in \cSh_\Ls(T^2)$ such that 
	\begin{equation}
	\hom(\mC_x, G)\simeq \hom(\bC_{\Int(C_x)},G)\simeq \Gamma(\Int(C_x), G),
	\end{equation} 
	where $G:= \bD\iota_{C_x}^{-1}p^{-1}F$. By Lemma~\ref{okcase}, this completes our proof. Note that $\MS(G)\subset \tLs$.
	
	If $\dim C_x=1$, after restricting to the affine span of $C_x$, we can take $\mC_x$ as the constant sheaf on the minimal open segment connecting points in $M$ which contains $C_x$.
	
	Hence, we assume $\dim C_x=2$. 
	Then non-$\Ls$ faces of $C_x$ are vertices. 
	Suppose that a vertex $v$ of $C_x$ is non-$\Ls$. 
	Let $\rho_0, \rho_1\in \Sigma(1)$ be the defining rays of $v$. 
	If $\Int(\Cone(\rho_0,\rho_1))$ contains no elements of $\Sigma(1)$ (i.e., $l(v)=0$), the smoothness assumption implies that $v$ belongs to $\Ls$. 
	This contradicts the assumption. 
	Hence, we can take $\rho_2 \in \Sigma(1)$ such that $\Int(\Cone(\rho_0,\rho_2))$ contains no $\Sigma(1)$'s.
	Note that a sufficiently small neighborhood of $v$ is a translation of a neighborhood of $0$ of $H_{\rho_0>0}\cap H_{\rho_1>0}$. We set
	\begin{equation}
	D_1:= H_{\rho_0\geq 0}\cap H_{\rho_1\geq 0}\cap H_{\rho_2<c},
	\end{equation} 
	where $c$ is the smallest integer which makes $D_1$ non-empty. Then $D_1$ is a 2-dimensional locally closed simplex such that those 3 vertices are 
	\begin{enumerate}
		\item $v$;
		\item a $\Ls$-vertex formed by $\rho_0$ and $\rho_2$, and
		\item a vertex $v'$ with $l(v')<l(v)$.
	\end{enumerate} 
	Thus, there exists an extension map $\bC_{D_1}\rightarrow \bC_{\Int(C_x)}$ (cf.\ \cite{STZ}). 
	We can repeat this process for other non-$\Ls$ vertices of $C_x$ and $D_1$.
	Moreover, the sum of lengths of vertices strictly decreases in this inductive process. 
	Hence, this process will stop in finitely many steps.
\end{proof}

Now we prove Conjecture~\ref{conj:gccc} (or equivalently Conjecture~\ref{conj:qccc}) for a smooth not necessarily complete 2-dimensional fan $\Sigma$.

\begin{theorem}
	If $\dim \Sigma=2$, Conjecture~\ref{conj:gccc} is true. 
	In other words, $\kappa_\Sigma$ induces a quasi-equivalence
	\begin{equation}
	\kappa_\Sigma \colon \Coh X_\Sigma\xrightarrow{\simeq} \wSh_{\Ls}(T^2)
	\end{equation}
	for any smooth (not necessarily complete) fan $\Sigma$.
\end{theorem}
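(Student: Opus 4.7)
The plan is to reduce to the complete case handled earlier in the appendix, and then strip off excess rays one at a time using Theorem~\ref{main}.

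For the base case, fix any smooth complete $\bar\Sigma$ of dimension $2$. Theorem~\ref{thm:kuwagaki} gives $\kappa_{\bar\Sigma}\colon \Coh X_{\bar\Sigma} \xrightarrow{\simeq} \cSh_{\Lambda_{\bar\Sigma}}(T^2)$. Combining Theorem~\ref{compactness} with Proposition~\ref{constiswrapped} yields $\cSh_{\Lambda_{\bar\Sigma}}(T^2) = \wSh_{\Lambda_{\bar\Sigma}}(T^2)$, so Conjecture~\ref{conj:gccc} holds for $\bar\Sigma$. Ind-completing --- using that $X_{\bar\Sigma}$ is Noetherian, so $\QCoh X_{\bar\Sigma} = \Ind(\Coh X_{\bar\Sigma})$, and that by construction $\lSh_{\Lambda_{\bar\Sigma}}(T^2) = \Ind(\wSh_{\Lambda_{\bar\Sigma}}(T^2))$ --- also upgrades this to Conjecture~\ref{conj:qccc} for $\bar\Sigma$, which is precisely the hypothesis needed to apply Theorem~\ref{main}.

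Given an arbitrary smooth $2$-dimensional fan $\Sigma$, I first choose a smooth complete $\bar\Sigma$ containing $\Sigma$ as a subfan. In dimension $2$ such a completion always exists: each open angular sector of $N_\bR$ not already covered by $\Sigma$ is bounded by at most two rays of $\Sigma$ and can be subdivided by finitely many rays so that the resulting pieces are smooth $2$-cones (Hirzebruch--Jung). Let $\rho_1,\dots,\rho_s$ enumerate the rays of $\bar\Sigma \setminus \Sigma$, ordered so that $\rho_i$ is always an endpoint of the chain of not-yet-removed added rays within its sector in the intermediate fan $\Sigma_{i-1}$ (with $\Sigma_0 := \bar\Sigma$). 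With this ordering, the star of $\rho_i$ in $\Sigma_{i-1}$ consists of $\rho_i$ together with its one or two adjacent $2$-cones, all lying entirely in the filled sector; none of them belong to $\Sigma$. Hence $\Sigma_i := (\Sigma_{i-1})^{\rho_i}_c$ strictly decreases by a single ray and its neighboring $2$-cones without ever disturbing $\Sigma$, and $\Sigma_s = \Sigma$.

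With this chain in hand, I propagate the equivalence via Theorem~\ref{main}: if Conjecture~\ref{conj:qccc} holds for $\Sigma_{i-1}$ then its top arrow $\kappa_{\Sigma_{i-1}}$ is an equivalence, the left vertical $\Coh X_{\Sigma_{i-1}} \to \Coh X_{\Sigma_i}$ is the dg-quotient by $\Coh_{D_{\rho_i}} X_{\Sigma_{i-1}}$ (Proposition~\ref{prp:localization}), and the right vertical is the quotient by $\cB_{\rho_i}$, with the two kernels matched by Lemma~\ref{divisorequivalence}. The induced bottom arrow $\kappa_{\Sigma_i}$ is therefore a quasi-equivalence, i.e.\ Conjecture~\ref{conj:gccc} for $\Sigma_i$; ind-completing again yields Conjecture~\ref{conj:qccc} for $\Sigma_i$ and the induction continues, terminating at $\Sigma$ after $s$ steps. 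The main difficulty I foresee is the combinatorial setup --- the existence of a smooth completion and the choice of ordering for removed rays so that each star-removal leaves $\Sigma$ intact; once this is spelled out carefully, everything else follows formally from Theorem~\ref{main} and the base case.
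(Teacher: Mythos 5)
Your strategy matches the paper's exactly: establish Conjecture~\ref{conj:gccc} (hence Conjecture~\ref{conj:qccc}) for a complete $2$-dimensional fan via Theorem~\ref{thm:kuwagaki}, Theorem~\ref{compactness} and Proposition~\ref{constiswrapped}, then strip excess rays one at a time using Theorem~\ref{main} and Proposition~\ref{prp:localization}, passing through Conjecture~\ref{conj:qccc} at each intermediate fan via ind-completion. The paper compresses the reduction into ``by completion and induction,'' and your unpacking of the combinatorics is in the right spirit, but you have located the subtlety in the wrong place. The ordering of the removed rays is in fact immaterial: since $(\Sigma_{i-1})^{\rho_i}_c$ by definition deletes exactly the cones having $\rho_i$ as a face, an easy induction gives $\Sigma_s=\{\sigma\in\bar\Sigma : \text{no }\rho_j\text{ is a face of }\sigma\}$ regardless of order, and this always contains $\Sigma$ since cones of $\Sigma$ cannot have a face lying outside $\Sigma$. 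The genuine subtlety is in the choice of $\bar\Sigma$: if $\Sigma$ has two rays $\rho_a,\rho_b$ that span a \emph{smooth} $2$-cone $\tau$ with $\tau\notin\Sigma$, the Hirzebruch--Jung procedure leaves $\tau$ unsubdivided (it is already nonsingular), so $\tau\in\bar\Sigma$ has no excess ray as a face, hence $\tau\in\Sigma_s\setminus\Sigma$ and the induction terminates at a fan strictly larger than $\Sigma$. The fix is simple --- when forming $\bar\Sigma$, insert at least one new ray (say $\rho_a+\rho_b$) into every $2$-dimensional sector not covered by a cone of $\Sigma$, even when that sector is already smooth, so that every $2$-cone of $\bar\Sigma$ outside $\Sigma$ has some excess ray among its faces --- but it must be stated, since otherwise the claimed equality $\Sigma_s=\Sigma$ fails.
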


\begin{proof}
	By completion and induction, it suffices to show the following:
	for a smooth complete fan $\tl \Sigma$ with $\dim \tl \Sigma=2$ and a 1-dimensional cone $\rho$ in $\tl \Sigma$, 
	one has the quasi-equivalence
	\begin{align}
	\kappa_{\tl{\Sigma}_c^\rho} \colon 
	\Coh(X_{\tl{\Sigma}_c^\rho}) \xrightarrow{\simeq} 
	\wSh_{\Lambda_{\tl{\Sigma}^\rho_c}}(T^2). \label{qe:2d1}
	\end{align}
	Here we used the same notation as in Section~\ref{sec:main}.
	By taking ind-categories in the equivalence of Theorem~\ref{thm:kuwagaki}, 
	by Theorem~\ref{compactness}, we see that Conjecture~\ref{conj:qccc} for $\Sigma$ holds.
	Hence the theorem follows from Theorem~\ref{main}.
\end{proof}

\section*{Acknowledgements}
The authors would like to thank Fumihiko~Sanda for teaching us the paper \cite{Seidelnat}. They also thank Zachary~Sylvan, Dmitry~Vaintrob, and Eric~Zaslow for discussions and comments, and Sheel~Ganatra for teaching us references for categorical localization of Fukaya categories. 
The first-named author is also grateful to Professor Pierre~Schapira for helpful comments.
The second-named author also thanks Professor Kazushi~Ueda for continuous encouragement. This work was supported by the Program for Leading Graduate Schools, MEXT, Japan. Both authors were supported
by the Grant-in-Aid for JSPS fellows.

\end{document}